\newtheorem{thm}{Theorem}
\newtheorem{assum}{Assumption}
\newtheorem{lem}{Lemma}
\newtheorem{cor}{Corollary}
\newtheorem{rem}{Remark}
\title{Boundary output feedback stabilization of a cascade of $N$ heat equations}
\author{Hugo Lhachemi, Christophe Prieur, and Emmanuel Tr{\'e}lat
\thanks{Hugo Lhachemi is with Universit{\'e} Paris-Saclay, CNRS, CentraleSup{\'e}lec, Laboratoire des signaux et syst\`emes, 91190, Gif-sur-Yvette, France (email: hugo.lhachemi@centralesupelec.fr).}
\thanks{Christophe Prieur is with Universit\'e Grenoble Alpes, CNRS, Gipsa-lab, 38000 Grenoble, France (e-mail: christophe.prieur@gipsa-lab.fr).}
\thanks{Emmanuel Trélat is with Sorbonne Universit\'e, Universit\'e Paris Cit\'e, CNRS, Inria, Laboratoire Jacques-Louis Lions, LJLL, F-75005 Paris, France (e-mail: emmanuel.trelat@sorbonne-universite.fr).}}
\date{}
\begin{document}

\maketitle

\begin{abstract}
This paper solves the problem of output feedback stabilization for a cascade of $N$ heat equations that are coupled at the boundary, the input being a scalar boundary control applied to the first heat equation of the cascade, and the scalar output being either a distributed or a pointwise in-domain measurement done on the last equation of the cascade. Two different configurations are studied in details. The first one consists of a cascade of $N$ heat equations with totally disconnected spectra. The second one consists of a cascade of $N$ identical heat equations, inducing eigenvalues of multiplicity $N$. In both cases, the problem is solved thanks to a spectral analysis and a study of the modal controllability and observability properties. The key point is that the generalized eigenvectors form a Riesz basis of the state space. The stabilization property is established in $L^2$ and $H^1$ norms.
\end{abstract}

\section{Introduction}

The design of control strategies for partial differential equation (PDE) cascades remains challenging. The case of coupled hyperbolic-parabolic PDE systems has been studied in \cite{chen2017backstepping,ghousein2020backstepping} using backstepping transformations. Control of a heat-wave PDE boundary coupling has been investigated in \cite{zhang2004polynomial,lhachemi2025controllability}. Boundary null controllability of two coupled parabolic PDEs has been studied in \cite{bhandari2021boundary}. Hyperbolic-elliptic couplings have been studied in \cite{chowdhury2023boundary,rosier2013unique}. Cascades of heat-heat equations have been reported in \cite{wang2015stabilization,kang2016stabilisation,tang2024boundary,tang2025sampled,lhachemi2025boundary}.  The situation becomes even more stringent for cascades of $N$ PDEs. In \cite{auriol2020output,auriol2024output} the authors deal with cascades of $N$ interconnected hyperbolic systems using backstepping-based methods.

To our knowledge, few contributions have addressed the topic of an arbitrary number $N$ of coupled heat equations. The observer design for a system of $N$ heat equations with in-domain and boundary couplings has been studied in \cite{camacho2020boundary}. The state-feedback boundary control of $N$ in-domain coupled heat equations, by means of $N$ boundary controls, has been reported in \cite{vazquez2016boundary}. Finally, the state-feedback boundary control of a chain of $N$ heat-ODE cascades has been studied in \cite{xu2023stabilization} for PDEs that cascade into the ODEs through bounded operators. 

The present paper reports one of the first contributions regarding the boundary output feedback stabilization of a cascade of $N$ heat equations by means of one scalar boundary control and one scalar measurement. We consider the system
\begin{subequations}\label{eq: studied PDE cascade}
\begin{align}
	& y_{t}^j = y_{xx}^j + a_j y^j , && 1 \leq j \leq N, \\ 
	& y_x^j(t,0) = y^{j+1}(t,1) ,\quad y_x^j(t,1) = 0 , && 1 \leq j \leq N-1, \\
	& y_x^N(t,0) = u(t) , \quad y_x^N(t,1) = 0, && \\
	& y^j(0,x) = y_0^j(x) , && 1 \leq j \leq N,
\end{align}
\end{subequations}
where $a_1,\ldots,a_N \in\mathbb{R}$. Here, $u(t)$ stands for the control input at time $t$, and $y_0^j \in L^2(0,1)$, $1 \leq j \leq N$, represent the initial conditions. Two different configurations are studied. The first one consists of a cascade of $N$ heat equations with two-by-two distinct eigenvalues. At the opposite, the second one consists of a cascade of $N$ identical heat equations, inducing eigenvalues of multiplicity $N$. In both cases, the problem is solved by developing spectral reduction techniques (see \cite{russell1978controllability,coron2004global,lhachemi2020pi,trelat2024control}) thanks to a full characterization of the eigenelements of the underlying operator. A key point of our approach is that the study of the eigenelements is not done for each component of the cascade individually, but directly for the full cascade seen as one single system (see \cite{lhachemi2025controllability}). We show that the generalized eigenvectors of the system form a Riesz basis of the state space, allowing us to study the modal controllability and observability properties of the cascade. Based on these results, output feedback control strategies are designed. The output, that can take the form either of a distributed or of a pointwise measurement, is done solely on the last equation of the cascade (described by $y^1$ in \eqref{eq: studied PDE cascade}). We establish stabilization results in $L^2$ and $H^1$ norms.

The paper is organized as follows. The configuration of the PDE cascade \eqref{eq: studied PDE cascade} with two-by-two distinct eigenvalues is studied in Section~\ref{sec: simple eig}. 
The case of $N$ identical heat equations, inducing eigenvalues of multiplicity $N$, is reported in Section~\ref{sec: mult eig}. Concluding remarks are provided in Section~\ref{sec: conclusion}.

\section{$N$ heat equations with two-by-two distinct eigenvalues}\label{sec: simple eig}

In this section, we make the following standing assumption, meaning that the spectra of the coupled heat equations are totally disconnected.

\begin{assum}\label{asum: simple eigenvalues}
Setting $\lambda_{i,k} = a_i - k^2 \pi^2$ and $\Lambda_i = \{ \lambda_{i,k} \,\mid\, k \in\mathbb{N} \}$ for all $1 \leq i \leq N$ and $k \in \mathbb{N}$, we assume that 
$i \neq j \Rightarrow \Lambda_i \cap \Lambda_j = \emptyset$.
\end{assum}

\subsection{Spectral properties}
We define the real Hilbert space
\begin{subequations}\label{eq: Hilbert space H^0}
\begin{equation}
\mathcal{H}^0 =  \big( L^2(0,1) \big)^N
\end{equation}
endowed with its usual inner product
\begin{equation}
\left\langle (f^j)_{1 \leq j \leq N} , (g^j)_{1 \leq j \leq N} \right\rangle
= \sum_{j=1}^N \int_0^1 f^j(x) g^j(x) \,\mathrm{d}x .
\end{equation}
\end{subequations}
We define the unbounded operator $\mathcal{A}:D(\mathcal{A})\rightarrow\mathcal{H}^0$ on
\begin{multline}
D(\mathcal{A}) = \Big\{  (f^j)_{1 \leq j \leq N} \in \mathcal{H}^0 \quad\mid\quad (f^j)_{1 \leq j \leq N} \in \big( H^2(0,1) \big)^N , \quad  (f^N)'(0) = (f^N)'(1) = 0 , \\
 (f^j)'(0) = f^{j+1}(1) ,\quad (f^j)'(1) = 0 ,\quad 1 \leq j \leq N-1 \Big\}  \label{eq: 2b2dev operator A - domain}
\end{multline}
by
$\mathcal{A} \, (f^j)_{1 \leq j \leq N} = \Big((f^j)''+a_j f^j\Big)_{1 \leq j \leq N}$.

\begin{lem}\label{lem: eigenstructures A}
Under Assumption \ref{asum: simple eigenvalues}, the eigenvalues of $\mathcal{A}$ are given by
\begin{equation}\label{eq: lambda_i_k}
\lambda_{i,k} = a_i - k^2 \pi^2 ,\quad 1 \leq i \leq N ,\quad k \in\mathbb{N},
\end{equation}
with associated eigenvectors $\phi_{i,k} = ( \phi_{i,k}^j )_{1 \leq j \leq N} \in D(\mathcal{A})$ given by
\begin{equation}\label{eq: simple eig phi}
\phi_{i,k}^j(x) = \left\{ \begin{array}{ll}
\displaystyle
\frac{(-1)^{k+i-j} \mu_k}{\prod_{j'=j}^{i-1}r_{i,k}^{j'}\sinh(r_{i,k}^{j'})} \cosh(r_{i,k}^{j}(1-x)) &\textrm{if}\ \ 1 \leq j \leq i-1,   \\[5mm]
\varphi_k(x) \triangleq \mu_k \cos(k\pi x) &\textrm{if}\ \ j=i,  \\[2mm]
0 &\textrm{if}\ \ i+1 \leq j \leq N , 
\end{array}\right.
\end{equation}
where $\mu_k = \left\{ \begin{array}{ll} 1 &\textrm{if}\ k=0  \\ \sqrt{2} &\textrm{if}\ k \geq 1 \end{array}\right.$
and $r_{i,k}^{j}$ is one of the two square roots of $\lambda_{i,k}-a_j$.
\end{lem}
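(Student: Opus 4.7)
My plan is to solve the eigenvalue equation $\mathcal{A}\phi = \lambda\phi$ with $\phi = (\phi^j)_{1 \le j \le N} \in D(\mathcal{A})$ directly, exploiting the triangular cascade structure of the boundary conditions in \eqref{eq: 2b2dev operator A - domain}. Componentwise, the equation reads $(\phi^j)'' = (\lambda - a_j)\phi^j$ on $(0,1)$, so each $\phi^j$ solves a second-order linear ODE. The top component $\phi^N$ satisfies the fully homogeneous Neumann conditions $(\phi^N)'(0) = (\phi^N)'(1) = 0$ and is decoupled from the rest of the system, while for $1 \le j \le N-1$ the condition $(\phi^j)'(1) = 0$ combines with the non-homogeneous condition $(\phi^j)'(0) = \phi^{j+1}(1)$ driven by the component above in the cascade.

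The first step is to localize the spectrum by a descending argument on $j$. Since $\phi^N$ solves a Neumann eigenvalue problem for $\partial_x^2 + a_N$, either $\phi^N \equiv 0$ or $\lambda \in \Lambda_N$ with $\phi^N$ proportional to $\varphi_k$ for the corresponding $k$. If $\phi^N \equiv 0$, then the driving term for $\phi^{N-1}$ vanishes and $\phi^{N-1}$ inherits homogeneous Neumann conditions, whence either $\phi^{N-1} \equiv 0$ or $\lambda \in \Lambda_{N-1}$. Iterating the dichotomy shows that $\lambda \in \bigcup_{i=1}^N \Lambda_i$, a disjoint union by Assumption~\ref{asum: simple eigenvalues}. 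Writing $\lambda = \lambda_{i,k}$ for the unique pair $(i,k)$ thus produced, the same descent forces $\phi^j \equiv 0$ for all $j > i$ and forces $\phi^i$ to lie in the one-dimensional eigenspace spanned by $\varphi_k$; normalizing gives $\phi^i = \varphi_k$, matching the stated formula in the case $j = i$.

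The second step reconstructs $\phi^j$ for $j < i$ by downward induction on $j$. Given $\phi^{j+1}$, set $\gamma_{j+1} := \phi^{j+1}(1)$; I then need to solve $(\phi^j)'' = (\lambda_{i,k} - a_j)\phi^j$ with $(\phi^j)'(1) = 0$ and $(\phi^j)'(0) = \gamma_{j+1}$. Here Assumption~\ref{asum: simple eigenvalues} gives $\lambda_{i,k} - a_j \notin \{-m^2\pi^2 : m \in \mathbb{N}\}$, so any square root $r = r_{i,k}^j$ of $\lambda_{i,k} - a_j$ satisfies $r \ne 0$ and $\sinh(r) \ne 0$; note that both $\cosh(r(1-x))$ and $r\sinh(r)$ are even in $r$, so the choice of square root is immaterial. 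Using the ansatz $\phi^j(x) = C\cosh(r(1-x))$, which satisfies $(\phi^j)'(1) = 0$ automatically, matching $(\phi^j)'(0) = \gamma_{j+1}$ uniquely determines $C = -\gamma_{j+1}/(r\sinh(r))$. Unrolling the recursion from $\gamma_i = (-1)^k\mu_k$ then produces the advertised product formula, with the sign $(-1)^{k+i-j}$ arising from one minus sign per descending step.

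The only genuine subtlety lies in the well-posedness of each inductive step, which is exactly the role of Assumption~\ref{asum: simple eigenvalues}: without the disjointness of the spectra, some denominator $r_{i,k}^j \sinh(r_{i,k}^j)$ could vanish and the reconstruction would break down. Once the explicit formula is in hand, a direct verification that $\phi_{i,k} \in D(\mathcal{A})$ and $\mathcal{A}\phi_{i,k} = \lambda_{i,k}\phi_{i,k}$ closes the construction, while the descent argument of the first step simultaneously guarantees that no other eigenvalues of $\mathcal{A}$ exist.
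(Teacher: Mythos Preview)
Your proposal is correct and follows essentially the same approach as the paper: both solve the boundary value problem $(\phi^j)''+a_j\phi^j=\lambda\phi^j$, $(\phi^j)'(1)=0$, $(\phi^j)'(0)=\phi^{j+1}(1)$ explicitly under Assumption~\ref{asum: simple eigenvalues}, obtaining $\phi^j(x)=-\dfrac{\phi^{j+1}(1)}{r\sinh r}\cosh(r(1-x))$, and then run a downward induction from $j=i$ to $j=1$. Your write-up is in fact more complete than the paper's terse version, since you spell out the descent argument showing that every eigenvalue lies in $\bigcup_i\Lambda_i$ and that $\phi^j\equiv 0$ for $j>i$, whereas the paper leaves this implicit in the phrase ``by using an induction argument.''
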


\begin{proof}
For a given $c \neq 0$, let us first compute the solution of $f'' + af = \lambda f$ such that $f'(1) = 0$ and $f'(0) = c$ when $\lambda - a \neq - k^2 \pi^2$ for any $k\in\mathbb{N}$. Denoting by $r$ one of the two square roots of $\lambda - a \neq 0$, we have $f(x) = \alpha e^{rx} + \beta e^{-rx}$ for some $\alpha,\beta\in\mathbb{C}$. The boundary conditions imply that
\begin{equation*}
\begin{bmatrix} r & -r \\ r e^r & -r e^{-r} \end{bmatrix}
\begin{bmatrix} \alpha \\ \beta \end{bmatrix}
= \begin{bmatrix} c \\ 0 \end{bmatrix} .
\end{equation*}
The square matrix is invertible if and only if $r^2 \sinh r \neq 0$, that is $r \neq ik\pi$ for any $k\in\mathbb{Z}$. This is satisfied based on our assumption that $r^2 = \lambda - a \neq - k^2 \pi^2$ for any $k\in\mathbb{N}$. Hence $\alpha = - \frac{c}{2r\sinh r} e^{-r}$ and $\beta = - \frac{c}{2r\sinh r} e^{r}$, implying that $f(x) = -\frac{c}{r \sinh r} \cosh(r(1-x))$.
The proof of the lemma then follows by using an induction argument.
\end{proof}

In order to establish that the set $\Phi = \{ \phi_{i,k} \,\mid\, 1 \leq i \leq N ,\; k \in\mathbb{N} \}$ of eigenvectors of $\mathcal{A}$ is a Riesz basis of $\mathcal{H}^0$, we first compute the eigenvectors of the adjoint operator $\mathcal{A}^*$ of $\mathcal{A}$, which is given by 
$\mathcal{A}^* ( (g^j)_{1 \leq j \leq N} ) 
= \Big((g^j)''+a_j g^j\Big)_{1 \leq j \leq N}$
on the domain
\begin{multline}\label{eq: domain A*}
D(\mathcal{A}^*) = \Big\{ (g^j)_{1 \leq j \leq N} \in \mathcal{H}^0 \,\mid\, (g^j)_{1 \leq j \leq N} \in \big( H^2(0,1) \big]^N , \quad (g^1)'(0) = (g^1)'(1) = 0 ,  \\
 (g^j)'(0) = 0 ,\; (g^j)'(1) = - g^{j-1}(0) ,\; 2 \leq j \leq N \Big\} . 
\end{multline}

\begin{lem}
Under Assumption \ref{asum: simple eigenvalues},  the eigenvalues of $\mathcal{A}^*$ are $\lambda_{i,k}$ with associated eigenvectors 
$\psi_{i,k} = \Big( \psi_{i,k}^j \Big)_{1 \leq j \leq N} \in D(\mathcal{A}^*)$ given by
\begin{equation}\label{eq: simple eig psi}
\psi_{i,k}^j(x) = \left\{ \begin{array}{ll}
0 & \textrm{if}\ \ 1 \leq j \leq i-1 , \\
\varphi_k(x) = \mu_k \cos(k\pi x) & \textrm{if}\ \  j=i , \\
\displaystyle \frac{(-1)^{j-i} \mu_k}{\prod_{j'=i+1}^{j}r_{i,k}^{j'}\sinh(r_{i,k}^{j'})} \cosh(r_{i,k}^{j}x) & \textrm{if}\ \ i+1 \leq j \leq N . 
\end{array}\right.
\end{equation}
\end{lem}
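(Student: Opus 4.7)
The strategy will mirror that of Lemma~\ref{lem: eigenstructures A}, but the coupling in $D(\mathcal{A}^*)$ propagates from low to high indices rather than the reverse: the condition $(g^j)'(1) = -g^{j-1}(0)$ determines $g^j$ from $g^{j-1}$. My plan is therefore to (i) set the first $i-1$ components to zero and check that all boundary conditions in \eqref{eq: domain A*} are satisfied; (ii) identify the $i$-th component as a cosine eigenfunction of the shifted Neumann Laplacian; and (iii) propagate the components $j = i+1, \ldots, N$ by induction using a single auxiliary ODE computation, the mirror image of the one carried out in the proof of Lemma~\ref{lem: eigenstructures A}. Specifically, for $c \in \mathbb{R}$ and a square root $r$ of $\lambda - a$ with $\lambda - a \neq -k^2\pi^2$ for every $k \in \mathbb{N}$, the unique solution of $f'' + af = \lambda f$ with $f'(0) = 0$ and $f'(1) = c$ is $f(x) = \frac{c}{r \sinh(r)} \cosh(rx)$, obtained from the same $2 \times 2$ linear system as in Lemma~\ref{lem: eigenstructures A} with the roles of $x = 0$ and $x = 1$ exchanged.

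For step (i), when $1 \leq j \leq i-1$, the choice $\psi_{i,k}^j = 0$ is trivially compatible with the Neumann conditions at $x=0$ and with the downward couplings $(g^j)'(1) = -g^{j-1}(0) = 0$. The coupling into the $i$-th component gives $(g^i)'(1) = -g^{i-1}(0) = 0$ (or this constraint is absent when $i=1$), which combined with $(\psi_{i,k}^i)'(0) = 0$ and the equation $(\psi_{i,k}^i)'' = -k^2\pi^2 \psi_{i,k}^i$ forces $\psi_{i,k}^i = \mu_k \cos(k\pi x) = \varphi_k$ up to the prescribed normalization $\mu_k$.

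For step (iii), I proceed by induction on $j$. Under Assumption~\ref{asum: simple eigenvalues}, $\lambda_{i,k} - a_j \neq -k'^2\pi^2$ for any $k' \in \mathbb{N}$---otherwise $\lambda_{j,k'} = \lambda_{i,k}$ would contradict $\Lambda_i \cap \Lambda_j = \emptyset$---so $r_{i,k}^j \sinh(r_{i,k}^j) \neq 0$. Applying the auxiliary computation with $c = -\psi_{i,k}^{j-1}(0)$ yields the recursion
\begin{equation*}
\psi_{i,k}^j(x) = \frac{-\psi_{i,k}^{j-1}(0)}{r_{i,k}^j \sinh(r_{i,k}^j)} \cosh(r_{i,k}^j x),
\end{equation*}
starting from $\psi_{i,k}^i(0) = \mu_k$; unrolling it will produce both the alternating sign $(-1)^{j-i}$ and the product $\prod_{j'=i+1}^j r_{i,k}^{j'}\sinh(r_{i,k}^{j'})$ in the denominator, matching \eqref{eq: simple eig psi}. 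The only real obstacle is careful bookkeeping of the sign and product bounds through the induction; after that, $\psi_{i,k} \in D(\mathcal{A}^*)$ and $\mathcal{A}^* \psi_{i,k} = \lambda_{i,k} \psi_{i,k}$ follow by inspection.
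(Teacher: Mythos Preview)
Your proposal is correct and follows essentially the same approach as the paper's own proof: both solve the auxiliary ODE $g''+ag=\lambda g$ with $g'(0)=0$, $g'(1)=c$ to obtain $g(x)=\frac{c}{r\sinh r}\cosh(rx)$, and then propagate by induction from the $i$-th component upward. You supply a bit more detail on steps (i) and (ii) and on the recursion bookkeeping than the paper does, but the argument is the same.
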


\begin{proof}
Given $c \neq 0$, let us compute the solution of $g'' + ag = \lambda g$ such that $g'(0) = 0$ and $g'(1) = c$ when $\lambda - a \neq - k^2 \pi^2$ for any $k \in\mathbb{N}$. Denoting by $r$ one of the two square roots of $\lambda - a \neq 0$, we have $g(x) = \alpha e^{rx} + \beta e^{-rx}$ for some $\alpha,\beta\in\mathbb{C}$. The boundary conditions imply that
\begin{equation*}
\begin{bmatrix} r & -r \\ r e^r & -r e^{-r} \end{bmatrix}
\begin{bmatrix} \alpha \\ \beta \end{bmatrix}
= \begin{bmatrix} 0 \\ c \end{bmatrix} .
\end{equation*}
The square matrix is invertible if and only if $r^2 \sinh r \neq 0$, that is $r \neq ik\pi$ for any $k\in\mathbb{Z}$. This is satisfied based on our assumption that $r^2 = \lambda - a \neq - k^2 \pi^2$ for any $k\in\mathbb{N}$. Hence $\alpha = \beta = \frac{c}{2r\sinh r}$, implying that $g(x) = \frac{c}{r \sinh r} \cosh(rx)$.
The proof of the lemma then follows by using an induction argument.
\end{proof}


\begin{lem}\label{eq: distinct eig - Riesz basis}
$\Phi = \{ \phi_{i,k} \,\mid\, 1 \leq i \leq N ,\; k \in\mathbb{N} \}$ is a Riesz basis of $\mathcal{H}^0$, of dual Riesz basis $\Psi = \{ \psi_{i,k} \,\mid\, 1 \leq i \leq N ,\; k \in\mathbb{N} \}$. Moreover $\mathcal{A}$ generates a $C_0$-semigroup.
\end{lem}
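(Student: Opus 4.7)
The plan is to show that $\Phi$ is a Riesz basis via a Bari--Markus perturbation argument, applied not to the orthonormal basis $(e_i\otimes\varphi_k)_{i,k}$ (which is too crude, as $\|\phi_{i,k}-e_i\otimes\varphi_k\|_{\mathcal{H}^0}$ does not tend to $0$ as $k\to\infty$), but to an intermediate Riesz basis $(\tilde\phi_{i,k})_{i,k}$ adapted to the block lower-triangular structure of the $\phi_{i,k}$. Here $e_j\otimes g$ denotes the tuple in $\mathcal{H}^0$ equal to $g$ in position $j$ and to $0$ elsewhere. First I would record that $\Phi$ and $\Psi$ are biorthogonal: since $\mathcal{A}\phi_{i,k}=\lambda_{i,k}\phi_{i,k}$, $\mathcal{A}^*\psi_{i',k'}=\lambda_{i',k'}\psi_{i',k'}$ and all eigenvalues are real, the standard identity $(\lambda_{i,k}-\lambda_{i',k'})\langle\phi_{i,k},\psi_{i',k'}\rangle=0$ together with Assumption~\ref{asum: simple eigenvalues} yields orthogonality as soon as $(i,k)\neq(i',k')$, while direct inspection of \eqref{eq: simple eig phi} and \eqref{eq: simple eig psi} gives $\langle\phi_{i,k},\psi_{i,k}\rangle=\int_0^1\varphi_k^2\,dx=1$.

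Next I would compute an asymptotic expansion of $\phi_{i,k}$ as $k\to\infty$. For $k$ large enough, each $r_{i,k}^{j}$ can be taken purely imaginary; writing $r_{i,k}^j=i\omega_{i,k}^j$ with $\omega_{i,k}^j=k\pi-(a_i-a_j)/(2k\pi)+O(1/k^3)$ yields $r_{i,k}^j\sinh(r_{i,k}^j)=-\omega_{i,k}^j\sin(\omega_{i,k}^j)=(-1)^k(a_i-a_j)/2+O(1/k^2)$, which is uniformly bounded away from $0$ thanks to Assumption~\ref{asum: simple eigenvalues}, together with $\cosh(r_{i,k}^j(1-x))=(-1)^k\cos(k\pi x)+O(1/k)$ in $L^2(0,1)$. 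Substituting in \eqref{eq: simple eig phi} produces a decomposition $\phi_{i,k}=\tilde\phi_{i,k}+\rho_{i,k}$ where $\tilde\phi_{i,k}=\sum_{j=1}^{i}d_{i,j}^{(k)}\,e_j\otimes\varphi_k$ belongs to the $N$-dimensional subspace $E_k=\mathrm{span}\{e_j\otimes\varphi_k:1\le j\le N\}$, with $d_{i,i}^{(k)}=1$ and the $d_{i,j}^{(k)}$ uniformly bounded in $k$, and $\|\rho_{i,k}\|_{\mathcal{H}^0}=O(1/k)$. Since the $E_k$ are pairwise orthogonal with orthogonal sum $\mathcal{H}^0$, and within each $E_k$ the family $(\tilde\phi_{i,k})_{1\le i\le N}$ is expressed in the orthonormal basis $(e_j\otimes\varphi_k)_j$ by a unitriangular matrix with uniformly bounded entries, the family $(\tilde\phi_{i,k})_{i,k}$ is a Riesz basis of $\mathcal{H}^0$ with uniform frame bounds.

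The remainder estimate then gives the quadratic closeness $\sum_{i,k}\|\phi_{i,k}-\tilde\phi_{i,k}\|_{\mathcal{H}^0}^2<\infty$, so the map $T\colon\tilde\phi_{i,k}\mapsto\phi_{i,k}$ extends to a bounded operator $T=I+K$ on $\mathcal{H}^0$ with $K$ Hilbert--Schmidt. Biorthogonality of $\Phi$ with $\Psi$ directly implies $\omega$-linear independence of $\Phi$ and hence injectivity of $T$; Fredholm theory then forces $T$ to be invertible, which proves that $\Phi$ is a Riesz basis of $\mathcal{H}^0$, with dual Riesz basis necessarily equal to $\Psi$ by uniqueness of biorthogonals. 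Finally, since $\Phi$ is a Riesz basis of eigenvectors with real eigenvalues bounded above by $\max_{i}a_i$, the formula $\mathcal{S}(t)f=\sum_{i,k}\langle f,\psi_{i,k}\rangle\,e^{\lambda_{i,k}t}\phi_{i,k}$ defines the $C_0$-semigroup generated by $\mathcal{A}$ on $\mathcal{H}^0$, satisfying $\|\mathcal{S}(t)\|\le Ce^{(\max_i a_i)t}$ for some $C>0$. The main obstacle I anticipate is the careful bookkeeping in the asymptotic expansion: one must simultaneously track the cancellation of $k$-dependent signs in the product $\prod_{j'=j}^{i-1}r_{i,k}^{j'}\sinh(r_{i,k}^{j'})$, the expansion of the $\cosh$ numerator, and the mismatch between the frequencies $\omega_{i,k}^j$ and $k\pi$, so that the coefficients $d_{i,j}^{(k)}$ stay uniformly bounded while the aggregate remainder remains of order $1/k$ in $L^2(0,1)$.
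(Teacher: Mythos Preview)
Your proof is correct but takes a genuinely different route from the paper. The paper invokes a criterion of Young: having established biorthogonality, it shows separately that $\mathrm{span}\,\Phi$ and $\mathrm{span}\,\Psi$ are dense (by an induction exploiting the triangular structure) and that each of $\Phi$ and $\Psi$ is a Bessel sequence (via the same asymptotic expansions you derive). You instead build the intermediate family $(\tilde\phi_{i,k})$, observe it is a Riesz basis because the block change-of-basis matrices are uniformly bounded unitriangular, and then apply a Bari--Fredholm argument using quadratic closeness and $\omega$-independence (the latter coming from biorthogonality with $\Psi$). Your approach has the advantage of requiring asymptotics only for $\Phi$, whereas the paper must repeat the expansion for $\Psi$ to obtain the second Bessel bound; on the other hand, the paper's route yields the Bessel constants for both families directly, and identifies $\Psi$ as the dual basis without appealing to uniqueness of biorthogonal systems. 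It is worth noting that the paper itself uses exactly your Bari-type strategy in the proofs of the $\mathcal{H}^1$ Riesz-basis lemma and of the analogous $\mathcal{H}^0$ lemma in the multiple-eigenvalue section, so your argument is fully consistent with the techniques deployed elsewhere in the paper.
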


\begin{proof}
We start by noting that $\Phi$ and $\Psi$ are biorthogonal, i.e., $\langle \phi_{i,k} , \psi_{i',k'} \rangle = \delta_{(i,k),(i',k')}$. Indeed, from their expressions, we have $\langle \phi_{i,k} , \psi_{i,k} \rangle = \int_0^1 \phi_{i,k}^i(x) \psi_{i,k}^i(x) \,\mathrm{d}x = \int_0^1 \varphi_{k}(x)^2 \,\mathrm{d}x = 1$. If $(i,k) \neq (i',k')$ then $\lambda_{i,k} \neq \lambda_{i',k'}$, hence $\lambda_{i,k} \langle \phi_{i,k} , \psi_{i',k'} \rangle = \langle \mathcal{A} \phi_{i,k} , \psi_{i',k'} \rangle = \langle \phi_{i,k} , \mathcal{A}^* \psi_{i',k'} \rangle = \lambda_{i',k'} \langle \phi_{i,k} , \psi_{i',k'} \rangle$ implying that $\langle \phi_{i,k} , \psi_{i',k'} \rangle = 0$.

Based on \cite[Chap.~1, Thm.~9]{young2001introduction}, it suffices to show that $\mathrm{span}\,\Phi$ and $\mathrm{span}\,\Psi$ are dense in $\mathcal{H}^0$, and that each of $\Phi$ and $\Psi$ form a Bessel sequence\footnote{A sequence $(\varphi_n)_{n \geq 0}$ of a separable Hilbert space $X$ is said to be a Bessel sequence if $(\langle f , \varphi_n \rangle)_{n \geq 0} \in \ell^2(\mathbb{N})$ for all $f \in X$.}. 

Let us show that $\mathrm{span}\,\Phi$ is dense if $\mathcal{H}^0$. Let $f = (f^i)_{1 \leq i \leq N} \in\mathcal{H}^0$ be such that $\langle f , \phi_{i,k} \rangle = 0$ for all integers $1 \leq i \leq N$ and $k \in\mathbb{N}$. Let us prove that $f = 0$. For $i = 1$ we have $\langle f , \phi_{1,k} \rangle = \int_0^1 f^1(x) \varphi_k(x) \,\mathrm{d}x = 0$ for all $k \in\mathbb{N}$. Since $(\varphi_k)_{k \geq 0}$ forms a Hilbert basis of $L^2(0,1)$, we infer that $f^1 = 0$. Assume next we have shown that $f^1 = f^2 = \cdots = f^l = 0$ for some $1 \leq l \leq N-1$. Then for $i=l+1$ we have $\langle f , \phi_{l+1,k} \rangle = \sum_{j=1}^{l+1} \int_0^1 f^j(x) \phi_{l+1,k}^j(x) \,\mathrm{d}x = \int_0^1 f^{l+1}(x) \varphi_k(x) \,\mathrm{d}x = 0$ for all $k \in\mathbb{N}$. As before, this implies that $f^{l+1} = 0$. By induction, we infer that $f = 0$, giving the claimed conclusion. A similar argument is employed to show that $\mathrm{span}\,\Psi$ is dense if $\mathcal{H}^0$.

In order to prove that $\Phi$ and $\Psi$ are Bessel sequences, we first establish asymptotic estimates of $\phi_{i,k}^j$ for $1 \leq j \leq i-1$ and of $\psi_{i,k}^j$ for $i+1 \leq j \leq N$ as $k \rightarrow + \infty$. All the asymptotic estimates derived below hold for $k \rightarrow + \infty$, uniformly with respect to $i,j$ and $x \in [0,1]$. Recalling that $(r_{i,k}^{j})^2 = \lambda_{i,k} - a_j = - \alpha_{i,j} - k^2 \pi^2$ with $\alpha_{i,j} = a_j - a_i$, for all sufficiently large integers $k$ we have $r_{i,k}^{j} = i \sqrt{k^2 \pi^2 + \alpha_{i,j}}$ with $\sqrt{k^2 \pi^2 + \alpha_{i,j}} \in\mathbb{R}$. Hence 
\begin{subequations}
\begin{equation}\label{eq: r_i_j_k}
r_{i,k}^{j} 
= ik\pi \sqrt{ 1 + \frac{\alpha_{i,j}}{k^2 \pi^2} } 
= i ( k\pi + \beta_{i,k}^{j} )
\end{equation}
where 
\begin{equation}\label{eq: beta_i_k^j}
\beta_{i,k}^{j} = \frac{\alpha_{i,j}}{2k\pi} + \mathrm{O}(1/k^3) \in \mathbb{R} .
\end{equation}
\end{subequations}
We thus infer that
$\sinh (r_{i,k}^{j}) = i (-1)^k \sin(\beta_{i,k}^{j}) = i \frac{(-1)^k \alpha_{i,j}}{2k\pi} + \mathrm{O}(1/k^3)$.
Combining the two latter asymptotic expansions, we have
$r_{i,k}^{j} \sinh (r_{i,k}^{j}) = \frac{(-1)^{k+1}\alpha_{i,j}}{2} + \mathrm{O}(1/k^2)$,
yielding
\begin{equation}
\frac{1}{r_{i,k}^{j} \sinh (r_{i,k}^{j})}  = \frac{1}{\frac{(-1)^{k+1}\alpha_{i,j}}{2} + \mathrm{O}(1/k^2)} 
= \frac{(-1)^{k+1} 2}{ \alpha_{i,j}} + \mathrm{O}(1/k^2) . \label{eq: asympt estimate 1/r/sinh(r)}
\end{equation}
By the definition \eqref{eq: simple eig phi} of $\phi_{i,k}^j$ for $1 \leq j \leq i-1$, we deduce from \eqref{eq: asympt estimate 1/r/sinh(r)} that
\begin{equation}
\prod_{j'=j}^{i-1} \frac{1}{r_{i,k}^{j'} \sinh (r_{i,k}^{j'})} 
= \prod_{j'=j}^{i-1} \left( \frac{(-1)^{k+1} 2}{ \alpha_{i,j'} } + \mathrm{O}(1/k^2) \right) 
= \frac{2^{i-j} (-1)^{(k+1)(i-j)}}{\prod_{j'=j}^{i-1} \alpha_{i,j'}} + \mathrm{O}(1/k^2) . \label{eq: estimate prod 1}
\end{equation}
In view of \eqref{eq: r_i_j_k} we get
$\cosh( r_{i,k}^{j} (1-x) )
= (-1)^k \cos ( k \pi x ) \cos ( \beta_{i,k}^{j} (1-x) ) + (-1)^k \sin ( k \pi x ) \sin ( \beta_{i,k}^{j} (1-x) )$.
Recalling \eqref{eq: beta_i_k^j}, we have
\begin{equation}\label{eq: cos_beta and sin_beta}
\cos ( \beta_{i,k}^{j} (1-x) ) = 1 + \mathrm{O}(1/k^2) , \qquad
\sin ( \beta_{i,k}^{j} (1-x) ) = \frac{\alpha_{i,j}}{2k\pi}(1-x) + \mathrm{O}(1/k^3) , 
\end{equation}
hence
\begin{equation*}
\cosh( r_{i,k}^{j} (1-x) ) = (-1)^k \cos(k\pi x) + \mathrm{O}(1/k) . 
\end{equation*}
Since $\mu_k = \sqrt{2}$ for $k \geq 1$, we deduce from \eqref{eq: simple eig phi} (for $1 \leq j \leq i-1$) that
\begin{multline}
\phi_{i,k}^j(x) 
= \frac{(-1)^{k+i-j} \mu_k}{\prod_{j'=j}^{i-1}r_{i,k}^{j'}\sinh(r_{i,k}^{j'})} \cosh(r_{i,k}^{j}(1-x)) 
= \frac{2^{i-j} (-1)^{k(i-j)}}{\prod_{j'=j}^{i-1} \alpha_{i,j'}} \sqrt{2} \cos(k\pi x) + \mathrm{O}(1/k) \\
= A_{i,k}^{j} \varphi_k(x) + R{\phi}_{i,k}^j(x) \label{eq: estimate phi_i_k^j} 
\end{multline}
with $A_{i,k}^{j} = \frac{2^{i-j} (-1)^{k(i-j)}}{\prod_{j'=j}^{i-1} \alpha_{i,j'}}$ and
\begin{equation}\label{eq: def Rphi}
R{\phi}_{i,k}^j(x) \triangleq \phi_{i,k}^j(x) - A_{i,k}^{j} \varphi_k(x) = \mathrm{O}(1/k) .
\end{equation}
We are now in a position to prove that $\Phi$ is a Bessel sequence. Let $f = (f^j)_{1 \leq j \leq N}$ be fixed. For an arbitrary integer $1 \leq i \leq N$, we infer from \eqref{eq: simple eig phi} (for $i+1\leq j\leq N$) and \eqref{eq: estimate phi_i_k^j} that 
$$
\langle f, \phi_{i,k} \rangle 
= \sum_{j=1}^{i-1} A_{i,k}^{j} \int_0^1 f^j \varphi_k \,\mathrm{d}x 
+ \sum_{j=1}^{i-1} \int_0^1 f^j R{\phi}_{i,k}^j \,\mathrm{d}x + \int_0^1 f^i \varphi_k \,\mathrm{d}x .
$$
Since $(\varphi_k)_{k \geq 0}$ is a Hilbert basis of $L^2(0,1)$, we get that $\Big( \int_0^1 f^j \varphi_k \,\mathrm{d}x \Big)_{k \in\mathbb{N}} \in \ell^2(\mathbb{N})$ for all $1 \leq j \leq i$. Furthermore, since $\vert A_{i,k}^{j} \vert = \frac{2^{i-j}}{\prod_{j'=j}^{i-1} \vert \alpha_{i,j'} \vert}$ is independent of $k$, we deduce that $\Big( A_{i,k}^{j} \int_0^1 f^j \varphi_k \,\mathrm{d}x \Big)_{k \in\mathbb{N}} \in \ell^2(\mathbb{N})$ for all $1 \leq j \leq i-1$. Finally, since $R{\phi}_{i,k}^j(x) = \mathrm{O}(1/k)$, we infer that 
$$
\big\vert \int_0^1 f^j R{\phi}_{i,k}^j \,\mathrm{d}x \big\vert \leq \Vert f^j \Vert_{L^2} \Vert R{\phi}_{i,k}^j \Vert_{L^2} = \mathrm{O}(1/k) ,
$$
hence $\Big( \int_0^1 f^j R{\phi}_{i,k}^j \,\mathrm{d}x \Big)_{k \in\mathbb{N}} \in \ell^2(\mathbb{N})$ for all $1 \leq j \leq i-1$. This shows that $\big( \langle f, \phi_{i,k} \rangle  \big)_{k \in\mathbb{N}} \in \ell^2(\mathbb{N})$. This result holds for all $1 \leq i \leq N$, hence $\big( \langle f, \phi_{i,k} \rangle  \big)_{1 \leq i \leq N, k \in\mathbb{N}} \in \ell^2(\mathbb{N})$, showing that $\Phi$ is a Bessel sequence.

It remains to show that $\Psi$ is also a Bessel sequence. Recalling the definition \eqref{eq: simple eig psi} of $\psi_{i,k}^j$ for $i+1 \leq j \leq N$, we infer from \eqref{eq: asympt estimate 1/r/sinh(r)} that
$$
\prod_{j'=i+1}^{j} \frac{1}{r_{i,k}^{j'} \sinh (r_{i,k}^{j'})} 
= \prod_{j'=i+1}^{j} \left( \frac{(-1)^{k+1} 2}{ \alpha_{i,j'} } + \mathrm{O}(1/k^2) \right) 
= \frac{2^{j-i} (-1)^{(k+1)(j-i)}}{\prod_{j'=i+1}^{j} \alpha_{i,j'}} + \mathrm{O}(1/k^2) .
$$
Owing to \eqref{eq: r_i_j_k}, we have
$\cosh( r_{i,k}^{j} x ) = \cos ( k \pi x ) \cos ( \beta_{i,k}^{j} x ) - \sin ( k \pi x ) \sin ( \beta_{i,k}^{j} x )$.
Recalling \eqref{eq: beta_i_k^j}, we infer that $\cos ( \beta_{i,k}^{j} x ) = 1 + \mathrm{O}(1/k^2)$ and $\sin ( \beta_{i,k}^{j} x ) = \frac{\alpha_{i,j}}{2k\pi}x + \mathrm{O}(1/k^3)$, hence $\cosh( r_{i,k}^{j} x ) = \cos(k\pi x) + \mathrm{O}(1/k)$. Since $\mu_k = \sqrt{2}$ for $k \geq 1$, we deduce that
$$
\psi_{i,k}^j(x) 
= \frac{2^{j-i} (-1)^{k(j-i)}}{\prod_{j'=i+1}^{j} \alpha_{i,j'}} \sqrt{2} \cos(k\pi x) + \mathrm{O}(1/k) 
= A_{i,k}^{j} \varphi_k(x) + R\psi_{i,k}^j(x) 
$$
with $A_{i,k}^{j} = \frac{2^{j-i} (-1)^{k(j-i)}}{\prod_{j'=i+1}^{j} \alpha_{i,j'}}$ and $R\psi_{i,k}^j(x) \triangleq \psi_{i,k}^j(x) - A_{i,k}^{j} \varphi_k(x) = \mathrm{O}(1/k)$. Following now the same procedure as in the previous paragraph, we infer that $\Psi$ is a Bessel sequence. 

Hence $\Phi$ is a Riesz basis, of dual Riesz basis $\Psi$.
Using \eqref{eq: lambda_i_k}, since $\mathcal{A}$ is closed, we infer that $\mathcal{A}$ is a Riesz spectral operator that generates a $C_0$-semigroup (see \cite{curtain2012introduction}).
\end{proof}

The previous result is instrumental for studying the solutions in $L^2$ norm, that is, in the Hilbert space $\mathcal{H}^0$. The following result will be useful for studying the solutions in $H^1$ norm, that is in the Hilbert space 
\begin{subequations}\label{eq: def space H^1}
\begin{equation}
\mathcal{H}^1 = \big( H^1(0,1) \big)^N
\end{equation}
endowed with its natural inner product
\begin{equation}
\left\langle (f^j)_{1 \leq j \leq N} , (g^j)_{1 \leq j \leq N} \right\rangle
= \sum_{j=1}^N \int_0^1 \left( f^j(x) g^j(x) + (f^j)'(x) (g^j)'(x) \right) \mathrm{d}x .
\end{equation}
\end{subequations}

\begin{lem}\label{lem: simple eig Riesz basis H1}
The set $\tilde{\Phi} = \{ \tilde{\phi}_{i,k} \,\mid\, 1 \leq i \leq N ,\; k \in\mathbb{N} \}$, where $\tilde{\phi}_{i,k} = \frac{1}{\sqrt{1+k^2\pi^2}} \phi_{i,k}$, is a Riesz basis of $\mathcal{H}^1$.
\end{lem}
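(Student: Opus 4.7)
The plan is to realize $\tilde{\Phi}$ as the image of an explicit orthonormal basis of $\mathcal{H}^1$ under a bounded invertible linear map $T : \mathcal{H}^1 \to \mathcal{H}^1$ which is a compact perturbation of a block-diagonal isomorphism. First, I would introduce the reference family $\tilde{\Upsilon} = \{ \tilde{\varphi}_k \mathbf{e}_i \,\mid\, 1 \le i \le N,\ k \in \mathbb{N}\}$, where $\tilde{\varphi}_k = \varphi_k / \sqrt{1+k^2\pi^2}$ and $\tilde{\varphi}_k \mathbf{e}_i$ denotes the $N$-tuple with $\tilde{\varphi}_k$ in the $i$-th slot and zero elsewhere, and verify that it is an orthonormal basis of $\mathcal{H}^1$. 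Orthonormality follows from the direct computation $\langle \varphi_k, \varphi_l \rangle_{H^1(0,1)} = (1+k^2\pi^2)\delta_{kl}$, and completeness from integration by parts using the Neumann boundary conditions of $\varphi_k$ combined with the fact that $(\varphi_k)_{k \ge 0}$ is a Hilbert basis of $L^2(0,1)$.

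The main analytical step is to upgrade the asymptotic expansion \eqref{eq: estimate phi_i_k^j} so as to also control the derivatives in $L^2(0,1)$. Differentiating \eqref{eq: simple eig phi} for $1 \le j \le i-1$ produces the factor $-r_{i,k}^j \sinh(r_{i,k}^j(1-x))$, which by \eqref{eq: r_i_j_k} and the same trigonometric manipulations already performed for $\cosh(r_{i,k}^j(1-x))$ equals $-(-1)^k k\pi \sin(k\pi x) + \mathrm{O}(1)$ uniformly in $x$. A short computation then gives $(\phi_{i,k}^j)'(x) = A_{i,k}^j \varphi_k'(x) + \mathrm{O}(1)$ uniformly in $x$ and in $i,j$, so that $\| R\phi_{i,k}^j \|_{H^1(0,1)} = \mathrm{O}(1)$. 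Setting $A_{i,k}^i = 1$ and $A_{i,k}^j = 0$ for $j > i$, this yields
\begin{equation*}
\left\| \tilde{\phi}_{i,k} - \sum_{j=1}^N A_{i,k}^j \tilde{\varphi}_k \mathbf{e}_j \right\|_{\mathcal{H}^1} = \mathrm{O}(1/k) \quad \text{as } k \to +\infty .
\end{equation*}
This derivative-level refinement of the expansion is the step I expect to be the main technical obstacle; once it is in place, the rest of the argument becomes a standard compact-perturbation argument.

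I would then define $T : \mathcal{H}^1 \to \mathcal{H}^1$ by $T(\tilde{\varphi}_k \mathbf{e}_i) = \tilde{\phi}_{i,k}$ and decompose $T = T_0 + T_R$ with $T_0(\tilde{\varphi}_k \mathbf{e}_i) = \sum_{j=1}^N A_{i,k}^j \tilde{\varphi}_k \mathbf{e}_j$. On each sector $\mathcal{V}_k = \mathrm{span}\{\tilde{\varphi}_k \mathbf{e}_i \,\mid\, 1 \le i \le N\}$, the operator $T_0$ is represented by an $N \times N$ upper triangular matrix with $1$'s on the diagonal and off-diagonal moduli $2^{i-j}/\prod_{j'=j}^{i-1}\vert\alpha_{i,j'}\vert$ that are independent of $k$. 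Since $\mathcal{H}^1 = \bigoplus_k \mathcal{V}_k$ as an orthogonal Hilbert sum, this block-diagonal structure with uniformly bounded and uniformly invertible blocks makes $T_0$ bounded and boundedly invertible on $\mathcal{H}^1$. The residual $T_R(\tilde{\varphi}_k \mathbf{e}_i) = \tilde{\phi}_{i,k} - T_0(\tilde{\varphi}_k \mathbf{e}_i)$ has $\mathcal{H}^1$-norm of order $\mathrm{O}(1/k)$ by the previous step, so $\sum_{i,k} \| T_R(\tilde{\varphi}_k \mathbf{e}_i) \|_{\mathcal{H}^1}^2 < +\infty$, which shows that $T_R$ is Hilbert--Schmidt and, in particular, compact.

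Finally, $T = T_0 + T_R$ is a compact perturbation of an isomorphism, and therefore Fredholm of index zero. Injectivity is immediate: if $Tf = 0$ with $f = \sum c_{i,k} \tilde{\varphi}_k \mathbf{e}_i$, then $\sum (c_{i,k}/\sqrt{1+k^2\pi^2}) \phi_{i,k} = 0$ holds in $\mathcal{H}^1$ and a fortiori in $\mathcal{H}^0$, so Lemma~\ref{eq: distinct eig - Riesz basis} forces $c_{i,k} = 0$ for all $(i,k)$. Hence $T$ is bijective, an isomorphism by the open mapping theorem, and $\tilde{\Phi} = T(\tilde{\Upsilon})$ is a Riesz basis of $\mathcal{H}^1$ as the image of an orthonormal basis under an isomorphism.
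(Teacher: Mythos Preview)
Your proof is correct and proceeds along a genuinely different route from the paper's. Both arguments hinge on the same analytical core---the derivative estimate $(\phi_{i,k}^j)'(x) = A_{i,k}^j \varphi_k'(x) + \mathrm{O}(1)$, which is exactly what the paper derives---but the surrounding architecture differs. The paper first shows that the ``coefficient-absorbed'' family $\hat{\Phi}$ (with $j$-th component $A_{i,k}^j \varphi_k$) is a Riesz basis of $\mathcal{H}^0$ via Bari's theorem, then transports it to $\mathcal{H}^1$ through the explicit surjective isometry $\mathfrak{J}=\sqrt{\mathrm{id}-\Delta}$ acting componentwise, and finally applies Bari's theorem a second time in $\mathcal{H}^1$ to compare $\tilde{\Phi}$ with the transported family $\check{\Phi}=\mathfrak{J}^{-1}\hat{\Phi}$; the $\omega$-linear independence hypothesis of Bari is supplied by Lemma~\ref{eq: distinct eig - Riesz basis}. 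You instead realize the synthesis map $T:\tilde{\varphi}_k\mathbf{e}_i\mapsto\tilde{\phi}_{i,k}$ directly as a block-diagonal isomorphism $T_0$ (the $N\times N$ matrices of $A_{i,k}^j$'s, upper triangular with unit diagonal and off-diagonal moduli independent of $k$) plus a Hilbert--Schmidt remainder $T_R$, and conclude by Fredholm index theory together with injectivity, again drawn from Lemma~\ref{eq: distinct eig - Riesz basis}. Your approach is somewhat more streamlined---it avoids the auxiliary isometry $\mathfrak{J}$ and the two successive applications of Bari---while the paper's approach stays within standard Riesz-basis perturbation lemmas and makes the role of the Neumann Laplacian more transparent. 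Both ultimately use Lemma~\ref{eq: distinct eig - Riesz basis} at the same logical point (ruling out a nontrivial kernel, equivalently verifying $\omega$-independence).
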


\begin{proof}
We define $\hat{\Phi} = \{ \hat{\phi}_{i,k} \,\mid\, 1 \leq i \leq N ,\; k\in\mathbb{N} \}$ where $\hat{\phi}_{i,k} = ( \hat{\phi}_{i,k}^j )_{1 \leq j \leq N}$ and
\begin{equation*}
\hat{\phi}_{i,k}^j(x) = \left\{ \begin{array}{ll}
A_{i,k}^{j} \varphi_k(x) & \textrm{if}\ \ 1 \leq j \leq i-1 , \\
\varphi_k(x) & \textrm{if}\ \ j=i , \\
0 & \textrm{if}\ \ i+1 \leq j \leq N .
\end{array}\right.
\end{equation*}
We infer from \eqref{eq: estimate phi_i_k^j} that $\Vert \phi_{i,k}^j - \hat{\phi}_{i,k}^j \Vert_{L^\infty} = \mathrm{O}(1/k)$, hence
$\sum_{1 \leq i \leq N} \sum_{k \geq 0} \Vert \phi_{i,k} - \hat{\phi}_{i,k} \Vert_{\mathcal{H}^0}^2 < \infty$.
Since $\hat{\Phi}$ is $\omega$-linearly independent, Bari's theorem (see \cite{gohberg1978introduction}) implies that $\hat{\Phi}$ is a Riesz basis of $\mathcal{H}^0$.

Let $\Delta$ be the Neumann Laplacian operator defined by $\Delta f=f''$ on $D(\Delta)=\{f\in H^2(0,1)\ \mid\ f'(0)=f'(1)=0\}$. Then $\mathrm{id}-\Delta$ is a positive selfadjoint operator, mapping isometrically and surjectively $D(\Delta)$ to $L^2(0,1)$, and thus the mapping $J=\sqrt{\mathrm{id}-\Delta}:H^1(0,1)\rightarrow L^2(0,1)$ is a surjective isometry (note that $\langle J f_1 , J f_2 \rangle = \langle (1-\Delta) f_1 , f_2 \rangle = \int_0^1 (-f_1''+f_1)f_2 \,\mathrm{d}x = \int_0^1 (f_1' f_2' + f_1 f_2) \,\mathrm{d}x = \langle f_1 , f_2 \rangle$). Therefore the mapping
\begin{equation*}
	\begin{array}{ccccc}
		\mathfrak{J} & : & \mathcal{H}^1 & \rightarrow & \mathcal{H}^0 \\
		& & (f^j)_{1 \leq j \leq N} & \mapsto & (J f^j)_{1 \leq j \leq N} .
	\end{array}		 
\end{equation*}
is also a surjective isometry. Hence,  $\check{\Phi} = \mathfrak{J}^{-1} \hat{\Phi} = \{ \check{\phi}_{i,k} \,\mid\, 1 \leq i \leq N ,\; k \in\mathbb{N} \}$ with $\check{\phi}_{i,k} = \mathfrak{J}^{-1}\hat{\phi}_{i,k}^j$ is a Riesz basis of $\mathcal{H}^1$. Noting that $J^{-1}\varphi_k = \frac{1}{\sqrt{1+k^2\pi^2}}\varphi_k$, we have 
\begin{equation*}
\check{\phi}_{i,k}^j(x) = \left\{ \begin{array}{ll}
\frac{A_{i,k}^{j}}{\sqrt{1+k^2\pi^2}} \varphi_k(x) & \textrm{if}\ \ 1 \leq j \leq i-1 , \\
\frac{1}{\sqrt{1+k^2\pi^2}} \varphi_k(x) & \textrm{if}\ \ j=i , \\
0 & \textrm{if}\ \ i+1 \leq j \leq N .
\end{array}\right.
\end{equation*}
To conclude the proof, we apply Bari's theorem by comparing $\tilde{\Phi}$ to the Riesz basis $\check{\Phi}$ of $\mathcal{H}^1$. Since $\tilde{\Phi}$ is $\omega$-linearly independent, it remains to show that
\begin{multline}\label{eq: cond app Bari's theorem}
\sum_{1 \leq i \leq N} \sum_{k \in\mathbb{N}} \Vert \tilde{\phi}_{i,k} - \check{\phi}_{i,k} \Vert_{\mathcal{H}^1}^2 
= \sum_{1 \leq i \leq N} \sum_{1 \leq j \leq i-1} \sum_{k \in\mathbb{N}} \Vert \tilde{\phi}_{i,k}^j - \check{\phi}_{i,k}^j \Vert_{H^1}^2 \\
= \sum_{1 \leq i \leq N} \sum_{1 \leq j \leq i-1} \sum_{k \in\mathbb{N}} \frac{1}{1+k^2\pi^2} \Vert R\phi_{i,k}^j \Vert_{H^1}^2
< \infty 
\end{multline} 
where we have used \eqref{eq: estimate phi_i_k^j}. Again, all the asymptotic estimates derived below hold for $k \rightarrow + \infty$, uniformly with respect to $i,j$ and $x \in [0,1]$. We know that $R{\phi}_{i,k}^j(x) = \mathrm{O}(1/k)$, hence $\Vert R{\phi}_{i,k}^j \Vert_{L^2}^2 = \mathrm{O}(1/k^2)$. Hence it suffices to analyze the asymptotic behavior of $\Vert (R{\phi}_{i,k}^j)' \Vert_{L^2}^2$. For integers $1 \leq j \leq i-1$, using \eqref{eq: simple eig phi}, we have
\begin{equation}\label{eq: inter comp Riesz basis H1}
(\phi_{i,k}^j)'(x) = \frac{(-1)^{k+i-j+1} \mu_k r_{i,k}^{j}}{\prod_{j'=j}^{i-1}r_{i,k}^{j'}\sinh(r_{i,k}^{j'})} \sinh(r_{i,k}^{j}(1-x)) .
\end{equation}
From \eqref{eq: r_i_j_k} we deduce that
\begin{multline*}
\sinh( r_{i,k}^{j} (1-x) )
= - i (-1)^k \sin ( k \pi x ) \cos ( \beta_{i,k}^{j} (1-x) ) + i (-1)^k \cos ( k \pi x ) \sin ( \beta_{i,k}^{j} (1-x) ) \\
= - i (-1)^{k} \sin(k\pi x) + \mathrm{O}(1/k)
\end{multline*}
where the last equality is inferred from \eqref{eq: cos_beta and sin_beta}. Combining the latter asymptotic behavior with \eqref{eq: r_i_j_k}, we have
$r_{i,k}^{j} \sinh( r_{i,k}^{j} (1-x) ) = (-1)^k k \pi \sin(k \pi x) + \mathrm{O}(1)$.
Using this result and \eqref{eq: estimate prod 1}, we infer from \eqref{eq: inter comp Riesz basis H1} that
\begin{equation}\label{eq: estimate diff_phi}
(\phi_{i,k}^j)'(x) = A_{i,k}^{j} \varphi_k'(x) + \mathrm{O}(1) ,
\end{equation}
hence, in view of \eqref{eq: def Rphi},
$(R{\phi}_{i,k}^j)'(x) = (\phi_{i,k}^j)'(x) - A_{i,k}^{j} \varphi_k'(x) = \mathrm{O}(1)$.
This shows that $\Vert (R{\phi}_{i,k}^j)' \Vert_{L^2}^2 = \mathrm{O}(1)$, and thus, recalling that $\Vert R{\phi}_{i,k}^j \Vert_{L^2}^2 = \mathrm{O}(1/k^2)$, we have $\Vert R{\phi}_{i,k}^j \Vert_{H^1}^2 = \mathrm{O}(1)$. Hence \eqref{eq: cond app Bari's theorem} holds true. This completes the proof.
\end{proof}

\subsection{Spectral reduction of the system}
We define the function $\varphi(x) = -\frac{1}{2}(1-x)^2$, so that $\varphi(1)=\varphi'(1)=0$ and $\varphi'(0)=1$. Making the change of variable 
\begin{equation}\label{eq: change of variable last component}
\tilde{y}^N(t,x) = y^N(t,x) - \varphi(x) u(t) ,
\end{equation}
we infer from \eqref{eq: studied PDE cascade} that
\begin{subequations}\label{eq: PDE homognenous coordinates}
\begin{align}
	& y_{t}^j = y_{xx}^j + a_j y^j , \quad 1 \leq j \leq N-1 \\
	& \tilde{y}_{t}^N = \tilde{y}_{xx}^N + a_N \tilde{y}^N + ( \varphi'' + a_N \varphi ) u - \varphi \dot{u} \\  
	& y_x^j(t,0) = \left\{ \begin{array}{ll} y^{j+1}(t,1) , & 1 \leq j \leq N-2 \\ \tilde{y}^N(t,1) , & j = N-1 \end{array} \right. \\
	& y_x^j(t,1) = 0 , \quad 1 \leq j \leq N-1 \\
	& \tilde{y}_x^N(t,0) = \tilde{y}_x^N(t,1) = 0 .
\end{align}
\end{subequations}
Defining the states $\mathcal{X} = (y^j)_{1 \leq j \leq N}$ (original coordinates) and $\tilde{\mathcal{X}} = \left( (y^j)_{1 \leq j \leq N-1} , \tilde{y}^N \right)$ (homogeneous coordinates), we infer from \eqref{eq: change of variable last component} that 
\begin{equation}\label{eq: change of variable abstract}
\tilde{\mathcal{X}} = \mathcal{X} + \beta u
\end{equation}
and \eqref{eq: PDE homognenous coordinates} is written as
\begin{subequations}\label{eq: abstract PDE}
\begin{align}
\dot{u} & = v \\
\frac{\mathrm{d}\tilde{\mathcal{X}}}{\mathrm{d}t} & = \mathcal{A} \tilde{\mathcal{X}} + \alpha u + \beta v
\end{align}
\end{subequations}
where $\alpha = (0,\ldots,0,\varphi''+a_N\varphi)$ and $\beta = (0,\ldots,0,-\varphi)$.

We now define
\begin{equation}\label{eq: coeff of projection}
x_{i,k} = \langle \mathcal{X} , \psi_{i,k} \rangle , \quad \tilde{x}_{i,k} = \langle \tilde{\mathcal{X}} , \psi_{i,k} \rangle ,
\quad
\alpha_{i,k} = \langle \alpha , \psi_{i,k} \rangle , \quad \beta_{i,k} = \langle \beta , \psi_{i,k} \rangle .
\end{equation}
By the Riesz basis property established in Lemma~\ref{eq: distinct eig - Riesz basis}, we have 
\begin{equation}\label{eq: dinstinct eig - trajectorie series expansion}
\tilde{\mathcal{X}}(t) = (y^1(t,\cdot),y^2(t,\cdot),\ldots,y^{N-1}(t,\cdot),\tilde{y}^N(t,\cdot)) 
= \sum_{i=1}^N \sum_{k \in\mathbb{N}} \tilde{x}_{i,k}(t) \phi_{i,k} . 
\end{equation}
The change of variable formula \eqref{eq: change of variable abstract} implies that
$\tilde{x}_{i,k} = x_{i,k} + \beta_{i,k} u$,
while the projection of the system dynamics in homogeneous coordinates \eqref{eq: abstract PDE} gives
\begin{equation}\label{eq: dynamics modes homogeneous coordinates}
\dot{\tilde{x}}_{i,k} = \lambda_{i,k} \tilde{x}_{i,k} + \alpha_{i,k} u + \beta_{i,k} v . 
\end{equation}
Combining the two latter equations, we infer that
\begin{equation}\label{eq: dynamics modes original coordinates}
\dot{x}_{i,k} = \lambda_{i,k} x_{i,k} + m_{i,k} u  ,
\end{equation}
where $m_{i,k} = \alpha_{i,k} + \lambda_{i,k} \beta_{i,k}$. Since all eigenvalues are simple, the controlled equation \eqref{eq: dynamics modes original coordinates} (mode $\lambda_{i,k}$) is controllable if and only if $m_{i,k} \neq 0$. The coefficients $m_{i,k}$ are characterized by the following lemma.

\begin{lem}\label{lem: obsv condition dinstinct eig}
We have
\begin{equation*}
m_{i,k} = - \psi_{i,k}^N(0) = 
\left\{ \begin{array}{ll}
-\mu_k  & \textrm{if}\quad i = N , \\
\displaystyle - \frac{(-1)^{N-i} \mu_k}{\prod_{j'=i+1}^{N}r_{i,k}^{j'}\sinh(r_{i,k}^{j'})}  & \textrm{if}\quad 1 \leq i \leq N-1 .
\end{array}\right.
\end{equation*}
In particular, $m_{i,k} \neq 0$ for all $1 \leq i \leq N$ and $k \in\mathbb{N}$.
\end{lem}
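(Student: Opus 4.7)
The plan is to first prove the identity $m_{i,k} = -\psi_{i,k}^N(0)$ by an integration by parts argument that exploits both the boundary conditions of $\varphi$ and those of $\psi_{i,k}$ in $D(\mathcal{A}^*)$, and only afterwards to substitute the explicit formula \eqref{eq: simple eig psi} to obtain the closed-form expression.

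First, since $\alpha = (0,\ldots,0,\varphi''+a_N\varphi)$ and $\beta = (0,\ldots,0,-\varphi)$ are both supported on the last component, definitions \eqref{eq: coeff of projection} and the formula $m_{i,k}=\alpha_{i,k}+\lambda_{i,k}\beta_{i,k}$ reduce the computation to a single scalar integral:
\begin{equation*}
m_{i,k} = \int_0^1 \bigl(\varphi''(x) + a_N\varphi(x) - \lambda_{i,k}\varphi(x)\bigr)\,\psi_{i,k}^N(x)\,\mathrm{d}x.
\end{equation*}
I would then integrate the $\varphi''$ term by parts twice. The first integration produces boundary terms $[\varphi'\psi_{i,k}^N]_0^1$, which collapse to $-\psi_{i,k}^N(0)$ thanks to $\varphi'(1)=0$ and $\varphi'(0)=1$. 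The second integration produces boundary terms $[\varphi(\psi_{i,k}^N)']_0^1$, which vanish because $\varphi(1)=0$ and, via the domain \eqref{eq: domain A*}, $(\psi_{i,k}^N)'(0)=0$ (for every $i$, including the case $N=1$). The remaining interior integral is
\begin{equation*}
\int_0^1 \varphi(x)\bigl[(\psi_{i,k}^N)''(x)+a_N\psi_{i,k}^N(x)-\lambda_{i,k}\psi_{i,k}^N(x)\bigr]\,\mathrm{d}x,
\end{equation*}
which is zero because $\psi_{i,k}$ is an eigenvector of $\mathcal{A}^*$ at $\lambda_{i,k}$ so its $N$-th component satisfies $(\psi_{i,k}^N)'' + a_N\psi_{i,k}^N = \lambda_{i,k}\psi_{i,k}^N$. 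This yields the desired identity $m_{i,k} = -\psi_{i,k}^N(0)$.

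With the identity in hand, the explicit values follow from \eqref{eq: simple eig psi}: for $i=N$ one has $\psi_{N,k}^N(x)=\varphi_k(x)$ so $\psi_{N,k}^N(0)=\mu_k$; for $1\leq i\leq N-1$ the formula for the range $i+1\leq j\leq N$ evaluated at $x=0$ gives, using $\cosh(0)=1$, the stated product expression. The non-vanishing claim then reduces, for $i\leq N-1$, to verifying that $r_{i,k}^{j'}\sinh(r_{i,k}^{j'})\neq 0$ for every $i+1\leq j'\leq N$; equivalently, that $r_{i,k}^{j'}\neq i m\pi$ for any $m\in\mathbb{Z}$. But $(r_{i,k}^{j'})^2 = \lambda_{i,k}-a_{j'}$, so such an equality would force $\lambda_{i,k}=a_{j'}-m^2\pi^2=\lambda_{j',|m|}\in\Lambda_{j'}$, contradicting Assumption~\ref{asum: simple eigenvalues} since $i\neq j'$.

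The only delicate point is keeping careful track of the boundary contributions from the two integrations by parts; the coupling conditions in $D(\mathcal{A}^*)$ feature the nonzero term $(\psi_{i,k}^N)'(1)=-\psi_{i,k}^{N-1}(0)$ when $i\leq N-1$, and it is only the chosen boundary conditions of $\varphi$ (specifically $\varphi(1)=\varphi'(1)=0$ together with the normalization $\varphi'(0)=1$) that make this term harmless and produce exactly the pointwise evaluation $-\psi_{i,k}^N(0)$. Once this is cleared, substitution and the disjoint spectra hypothesis finish the argument.
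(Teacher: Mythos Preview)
Your proposal is correct and follows essentially the same route as the paper's proof: both compute the integral $\int_0^1(\varphi''+a_N\varphi)\psi_{i,k}^N\,\mathrm{d}x$ via two integrations by parts, use $\varphi(1)=\varphi'(1)=0$, $\varphi'(0)=1$ and $(\psi_{i,k}^N)'(0)=0$ to reduce the boundary terms to $-\psi_{i,k}^N(0)$, and invoke the eigenvalue relation for $\psi_{i,k}$ to handle the interior integral. The only cosmetic difference is that the paper phrases the interior integral as $-\langle\beta,\mathcal{A}^*\psi_{i,k}\rangle=-\lambda_{i,k}\beta_{i,k}$ and obtains $\alpha_{i,k}=-\psi_{i,k}^N(0)-\lambda_{i,k}\beta_{i,k}$ before forming $m_{i,k}$, whereas you combine the $\lambda_{i,k}\varphi$ term from the outset; your explicit justification of $m_{i,k}\neq 0$ via Assumption~\ref{asum: simple eigenvalues} is a welcome addition that the paper leaves implicit.
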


\begin{proof}
Using the definition \eqref{eq: coeff of projection} of $\alpha_{i,k}$ and using integrations by parts, we have
\begin{multline*}
\alpha_{i,k} = \int_0^1 (\varphi''+a_n\varphi) \psi_{i,k}^N \,\mathrm{d}x
= \left[ \varphi' \psi_{i,k}^N - \varphi ( \psi_{i,k}^N )' \right]_0^1 + \int_0^1 \varphi ( (\psi_{i,k}^N)'' + a_n \psi_{i,k}^N ) \,\mathrm{d}x \\
= - \psi_{i,k}^N(0) - \langle \beta , \mathcal{A}^* \psi_{i,k} \rangle 
= - \psi_{i,k}^N(0) - \lambda_{i,k} \beta_{i,k} ,
\end{multline*}
where we have used that $\varphi(1)=\varphi'(1)=0$, $\varphi'(0)=1$, and $(\psi_{i,k}^N)'(0)=0$.
\end{proof}

\begin{cor}
Under Assumption~\ref{asum: simple eigenvalues}, each mode $\lambda_{i,k}$ of the PDE cascade \eqref{eq: studied PDE cascade} is controllable.
\end{cor}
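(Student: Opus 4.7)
The proof will be essentially a one-line deduction from Lemma~\ref{lem: obsv condition dinstinct eig} together with the modal reduction derived just above the corollary, so my plan is mainly to explain carefully which pieces I invoke.

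First, I would recall the modal reduction: thanks to the Riesz basis property of $\Phi$ (Lemma~\ref{eq: distinct eig - Riesz basis}) and the expansion \eqref{eq: dinstinct eig - trajectorie series expansion}, the dynamics of each coefficient $x_{i,k} = \langle \mathcal{X},\psi_{i,k}\rangle$ in the original coordinates reduces to the scalar LTI equation \eqref{eq: dynamics modes original coordinates}, namely $\dot{x}_{i,k} = \lambda_{i,k} x_{i,k} + m_{i,k} u$. Since the control $u$ is scalar and the mode equation is one-dimensional, Kalman's rank condition gives controllability of the mode $\lambda_{i,k}$ if and only if $m_{i,k} \neq 0$, as already remarked in the text preceding Lemma~\ref{lem: obsv condition dinstinct eig}.

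Second, I would invoke the closed-form expression of $m_{i,k}$ provided by Lemma~\ref{lem: obsv condition dinstinct eig}. For $i=N$ we have $m_{N,k} = -\mu_k$, which is $-1$ if $k=0$ and $-\sqrt{2}$ if $k\geq 1$, so $m_{N,k}\neq 0$. For $1 \leq i \leq N-1$, we have
\begin{equation*}
m_{i,k} = - \frac{(-1)^{N-i} \mu_k}{\prod_{j'=i+1}^{N}r_{i,k}^{j'}\sinh(r_{i,k}^{j'})},
\end{equation*}
and since $\mu_k \neq 0$, it suffices to check that each factor $r_{i,k}^{j'}\sinh(r_{i,k}^{j'})$ in the denominator is nonzero. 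If $r_{i,k}^{j'} = 0$ for some $i+1\leq j'\leq N$, then $\lambda_{i,k} = a_{j'} = \lambda_{j',0} \in \Lambda_{j'}$, contradicting Assumption~\ref{asum: simple eigenvalues} because $i \neq j'$. If $\sinh(r_{i,k}^{j'})=0$, then $(r_{i,k}^{j'})^2 = -(k')^2\pi^2$ for some $k' \in \mathbb{Z}\setminus\{0\}$, i.e.\ $\lambda_{i,k} = a_{j'} - (k')^2\pi^2 = \lambda_{j',|k'|} \in \Lambda_{j'}$, again contradicting $\Lambda_i \cap \Lambda_{j'} = \emptyset$. Hence $m_{i,k}\neq 0$ for all admissible $(i,k)$, which concludes the proof.

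There is no genuine obstacle here: all the analytical work (the spectral description of $\mathcal{A}$, the Riesz basis property, the identification $m_{i,k}=-\psi_{i,k}^N(0)$ via integration by parts) has already been carried out, and the corollary is a clean bookkeeping consequence of those results combined with the disjointness assumption on the spectra.
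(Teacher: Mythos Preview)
Your proof is correct and follows exactly the paper's approach: the corollary is an immediate consequence of Lemma~\ref{lem: obsv condition dinstinct eig}, which already asserts $m_{i,k}\neq 0$, combined with the observation preceding it that controllability of the scalar mode \eqref{eq: dynamics modes original coordinates} is equivalent to $m_{i,k}\neq 0$. Your second paragraph spells out the verification that each factor $r_{i,k}^{j'}\sinh(r_{i,k}^{j'})$ is nonzero under Assumption~\ref{asum: simple eigenvalues}, which the paper leaves implicit in the statement of Lemma~\ref{lem: obsv condition dinstinct eig}; this is a welcome clarification rather than a departure.
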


\subsection{Output feedback control strategy} 
Given $c \in L^2(0,1)$, we consider the following distributed output, measured on the last component $y^1$ of the PDE cascade \eqref{eq: studied PDE cascade}:
\begin{equation}\label{eq: bounded measurement}
z(t) = \int_0^1 c(x) y^1(t,x) \,\mathrm{d}x
\end{equation}
Our objective is to design an output feedback control strategy for \eqref{eq: studied PDE cascade} based on the measurement $z(t)$.

Note that the measurement must be done on the last component $y^1$ of the cascade; indeed a measurement done on an intermediate component of the cascade (say $y_j$ for some $2 \leq i \leq N$) would not allow to reconstruct the dynamics of the last components of the cascade (say $y_{j'}$ for $1 \leq j' \leq j-1$): observability would fail. 

Let $\delta > 0$ be the targeted exponential decay rate for the closed-loop system. Using \eqref{eq: lambda_i_k}, we choose, for any $1 \leq i \leq N$, a sufficiently large integer $n_{0,i} \in\mathbb{N}$ such that $\lambda_{i,n_{0,i}} < - \delta$, implying that $\lambda_{i,k} \leq \lambda_{i,n_{0,i}} < - \delta$ for all integers $k \geq n_{0,i}$. We now write a finite-dimensional dynamics that captures the $n_{0,i}$ first modes, for $1 \leq i \leq N$, as follows. Defining the state vector
\begin{align*}
X_{1,i} & = \begin{bmatrix} \tilde{x}_{i,0} & \tilde{x}_{i,1} & \ldots & \tilde{x}_{i,n_{0,i}-1} \end{bmatrix}^\top , \quad 1 \leq i \leq N , \\
X_1 & = \mathrm{col}(X_{1,1},X_{1,2},\dots,X_{1,n}) ,
\end{align*}
we infer from \eqref{eq: dynamics modes homogeneous coordinates} that
\begin{equation*}
\dot{X}_{1,i} = A_{1,i} X_{1,i} + B_{1,u,i} u + B_{1,v,i} v , \quad 1 \leq i \leq N,
\end{equation*}
where
\begin{multline*}
A_{1,i} = \mathrm{diag}(\lambda_{i,0},\lambda_{i,1},\ldots,\lambda_{i,n_{0,i}-1}) , \quad
B_{1,u,i} = \begin{bmatrix} \alpha_{i,0} & \alpha_{i,1} & \ldots & \alpha_{i,n_{0,i}-1} \end{bmatrix}^\top , \\
B_{1,v,i} = \begin{bmatrix} \beta_{i,0} & \beta_{i,1} & \ldots & \beta_{i,n_{0,i}-1} \end{bmatrix}^\top ,
\end{multline*}
yielding
\begin{equation}\label{eq: finite dim dynamics first modes}
\dot{X}_{1} = A_{1} X_{1} + B_{1,u} u + B_{1,v} v
\end{equation}
where
\begin{multline*}
A_{1} = \mathrm{diag}(A_{1,1},A_{1,2},\ldots,A_{1,N}) , \quad
B_{1,u} = \mathrm{col}( B_{1,u,1} , B_{1,u,2} , \ldots , B_{1,u,N} ) , \\
B_{1,v} = \mathrm{col}( B_{1,v,1} , B_{1,v,2} , \ldots , B_{1,v,N} ) .
\end{multline*}
Recalling that $v = \dot{u}$, we define the augmented state vector
$X_{1,a} = \mathrm{col}(X_1,u)$,
yielding the dynamics
\begin{equation}\label{eq: augmented finite dim dynamics first modes}
\dot{X}_{1,a} = A_{1,a} X_{1,a} + B_{1,a} v
\end{equation}
where
\begin{equation}\label{eq: augmented matrices}
A_{1,a} = \begin{bmatrix} A_1 & B_{1,u} \\ 0 & 0 \end{bmatrix} , \quad B_{1,a} = \begin{bmatrix} B_{1,v} \\ 1 \end{bmatrix} .
\end{equation}
Under Assumption~\ref{asum: simple eigenvalues} that ensures that the eigenvalues are all simple, the Hautus test implies that the pair $(A_{1,a},B_{1,a})$ is controllable if and only if $m_{i,k} = \alpha_{i,k} + \lambda_{i,k} \beta_{i,k} \neq 0$ for all $1 \leq i \leq N$ and all $0 \leq k \leq n_{0,i}-1$. Lemma~\ref{lem: obsv condition dinstinct eig} implies the following result.

\begin{lem}
Under Assumption~\ref{asum: simple eigenvalues}, the pair $(A_{1,a},B_{1,a})$ is controllable.
\end{lem}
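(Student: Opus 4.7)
The plan is to apply the Hautus (PBH) test to the finite-dimensional pair $(A_{1,a}, B_{1,a})$ defined in \eqref{eq: augmented matrices}. Since $A_{1,a}$ is block upper-triangular with diagonal blocks $A_1 = \mathrm{diag}(\lambda_{i,k})$ and the scalar $0$, its spectrum is the set $\{\lambda_{i,k} : 1 \leq i \leq N,\ 0 \leq k \leq n_{0,i}-1\} \cup \{0\}$, and under Assumption~\ref{asum: simple eigenvalues} the values $\lambda_{i,k}$ are pairwise distinct. The Hautus condition can thus be checked separately at each eigenvalue by exhibiting the left eigenspace and testing it against $B_{1,a}$.

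For a nonzero eigenvalue $\lambda_{i,k}$, the associated left eigenvector of $A_{1,a}$ is unique up to scalar and has the form $[e_{i,k}^\top,\, w_0]$, where $e_{i,k}$ is the coordinate vector selecting the $(i,k)$-entry in $A_1$ and $w_0$ is forced by the relation $e_{i,k}^\top B_{1,u} = \lambda_{i,k}\, w_0$, giving $w_0 = \alpha_{i,k}/\lambda_{i,k}$. Pairing against $B_{1,a}$ yields $\beta_{i,k} + \alpha_{i,k}/\lambda_{i,k}$, which vanishes if and only if $m_{i,k} = \alpha_{i,k} + \lambda_{i,k}\beta_{i,k} = 0$, and this is ruled out by Lemma~\ref{lem: obsv condition dinstinct eig}. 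So Hautus holds at each such mode.

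Finally, at $\lambda = 0$ a left eigenvector $[v_1^\top, w_0]$ must satisfy $v_1^\top A_1 = 0$ and $v_1^\top B_{1,u} = 0$. Either $v_1 = 0$ (when $0 \notin \sigma(A_1)$), or $v_1 = c_1 e_{i_0,k_0}^\top$ with $c_1 \alpha_{i_0,k_0} = 0$ (when $0 = \lambda_{i_0,k_0}$); in the latter case $\alpha_{i_0,k_0} = m_{i_0,k_0} \neq 0$ by Lemma~\ref{lem: obsv condition dinstinct eig} again forces $v_1 = 0$. Hence in both situations the only left eigenvector at $\lambda = 0$ is (up to scalar) $[0, 1]$, and $[0, 1]\, B_{1,a} = 1 \neq 0$. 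Combining the two cases, the Hautus condition is satisfied at every eigenvalue of $A_{1,a}$, so the pair is controllable. The only delicate point, which is the main obstacle, is the possible coincidence $\lambda_{i_0,k_0} = 0$, which would create a Jordan block at $0$ and could in principle hide an uncontrollable mode; this is absorbed uniformly by the fact that Lemma~\ref{lem: obsv condition dinstinct eig} asserts $m_{i,k} \neq 0$ for \emph{every} index $(i,k)$, not merely for those with $\lambda_{i,k} \neq 0$.
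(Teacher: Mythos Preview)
Your proof is correct and follows the same approach as the paper: apply the Hautus test to $(A_{1,a},B_{1,a})$ and invoke Lemma~\ref{lem: obsv condition dinstinct eig} to rule out $m_{i,k}=0$. The paper states this in a single sentence without details; your version is more careful in that it explicitly treats the eigenvalue $0$ of $A_{1,a}$ (including the case where some $\lambda_{i_0,k_0}=0$ creates a repeated eigenvalue), which the paper leaves implicit.
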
 

\begin{rem}\label{rem: state-feedback}
At this step, it is possible to design a state-feedback control strategy, as follows.
By the above lemma, there exists a matrix $K$ such that $A_{1,a}-B_{1,a}K$ has a spectral abscissa less that $-\delta < 0$, and we set the feedback control
$v = - K X_{1,a}$.
The stability of the resulting closed closed-loop system \eqref{eq: studied PDE cascade} is established in $\mathcal{H}^0$ norm, as defined by \eqref{eq: Hilbert space H^0}, thanks to the Lyapunov functional
$$
V = X_{1,a}^\top P X_{1,a} + \gamma \sum_{i=1}^{N} \sum_{k \geq n_{0,i}} \tilde{x}_{i,k}^2 ,
$$
where $P \succ 0$ is the solution of the Lyapunov equation $(A_{1,a}-B_{1,a}K)^\top P + P (A_{1,a}-B_{1,a}K) + 2 \delta P + I = 0$ and $\gamma > 0$ is chosen small enough so that $\dot{V}+2\delta V \leq 0$. For classical solutions, the stability can also be established in $\mathcal{H}^1$ norm, as defined by \eqref{eq: def space H^1}, thanks to the Lyapunov functional
$V = X_{1,a}^\top P X_{1,a} + \gamma \sum_{i=1}^{N} \sum_{k \geq n_{0,i}} (1+k^2) \tilde{x}_{i,k}^2$.
\end{rem}

In order to define our output feedback control strategy, using \eqref{eq: dinstinct eig - trajectorie series expansion}, we infer from \eqref{eq: bounded measurement} that
\begin{equation}\label{eq: output series expansion}
z(t) = \sum_{i=1}^N \sum_{k \in\mathbb{N}} c_{i,k} \tilde{x}_{i,k}(t) = C_1 X_1(t) + \sum_{i=1}^N \sum_{k \geq n_{0,i}} c_{i,k} \tilde{x}_{i,k}(t) 
\end{equation} 
where
\begin{equation}\label{eq: coeff projection measurement bounded operator}
c_{i,k} = \int_0^1 c(x) \phi_{i,k}^1(x) \,\mathrm{d}x 
\end{equation}
and
$C_{1,i}  = \begin{bmatrix} c_{i,0} & c_{i,1} & \ldots & c_{i,n_{0,i}-1} \end{bmatrix}$
and
$C_1 = \begin{bmatrix} C_{1,1} & C_{1,2} & \ldots & C_{1,N} \end{bmatrix}$.
Since the eigenvalues are simple, the Hautus test implies that the pair $(A_1,C_1)$ is observable if and only if $c_{i,k} \neq 0$ for all $1 \leq i \leq N$ and $0 \leq k \leq n_{0,i}-1$. Using the definition \eqref{eq: coeff projection measurement bounded operator} of $c_{i,k}$ and the expression of $\phi_{i,k}$ given by \eqref{eq: simple eig phi}, we infer the following result.

\begin{lem}\label{eq: simple eig obsv cond}
Under Assumption~\ref{asum: simple eigenvalues}, the pair $(A_1,C_1)$ is observable if and only if
\begin{subequations}\label{eq: obsv cond distinct eig}
\begin{align}
& \int_0^1 c(x) \cos(k \pi x)\,\mathrm{d}x \neq 0 && \textrm{if}\quad i=1 , \\
& \int_0^1 c(x) \cosh(r_{i,k}^{1}(1-x)) \,\mathrm{d}x \neq 0 && \textrm{if}\quad i \geq 2  ,
\end{align}
\end{subequations}
for all $1 \leq i \leq N$ and $0 \leq k \leq n_{0,i}-1$. Recall that $r_{i,k}^{1}$ is one of the two square roots of $\lambda_{i,k}-a_1$.
\end{lem}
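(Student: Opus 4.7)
The plan is to unpack the Hautus condition already stated just before the lemma by substituting the closed-form expression for the first component $\phi_{i,k}^1$ given by (eq: simple eig phi). The paper has already reduced observability of $(A_1,C_1)$ to the scalar conditions $c_{i,k} \neq 0$ for $1 \leq i \leq N$ and $0 \leq k \leq n_{0,i}-1$, so the only remaining task is to rewrite each $c_{i,k}$ using its definition (eq: coeff projection measurement bounded operator) and to identify a nonzero prefactor that can be divided out.

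First, I would split into two cases according to whether $i=1$ or $i \geq 2$. For $i=1$, the formula (eq: simple eig phi) gives $\phi_{1,k}^1(x) = \varphi_k(x) = \mu_k \cos(k\pi x)$, so $c_{1,k} = \mu_k \int_0^1 c(x)\cos(k\pi x)\,\mathrm{d}x$, and since $\mu_k \neq 0$, the condition $c_{1,k}\neq 0$ is equivalent to (eq: obsv cond distinct eig)(a). For $i \geq 2$, the index $j=1$ falls in the range $1 \leq j \leq i-1$ of (eq: simple eig phi), giving
\begin{equation*}
c_{i,k} = \frac{(-1)^{k+i-1}\mu_k}{\prod_{j'=1}^{i-1} r_{i,k}^{j'}\sinh(r_{i,k}^{j'})}\int_0^1 c(x)\cosh\bigl(r_{i,k}^{1}(1-x)\bigr)\,\mathrm{d}x.
\end{equation*}

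The only point requiring attention is verifying that the scalar prefactor is nonzero, so that the vanishing of $c_{i,k}$ is controlled solely by the integral. This reduces to showing $r_{i,k}^{j'}\sinh(r_{i,k}^{j'}) \neq 0$ for each $1 \leq j' \leq i-1$. Under Assumption~\ref{asum: simple eigenvalues}, $\lambda_{i,k} \neq \lambda_{j',k'}$ for all $k' \in \mathbb{N}$ translates into $\lambda_{i,k} - a_{j'} \neq - (k')^2 \pi^2$ for all $k' \in \mathbb{N}$. Taking $k'=0$ yields $(r_{i,k}^{j'})^2 \neq 0$, while $k'\geq 1$ yields $r_{i,k}^{j'} \neq \pm i k' \pi$, so $\sinh(r_{i,k}^{j'}) \neq 0$. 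Combined with $\mu_k \neq 0$, the prefactor is nonzero, and $c_{i,k}\neq 0$ is equivalent to (eq: obsv cond distinct eig)(b).

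There is no real obstacle; the proof is a direct substitution plus the observation that Assumption~\ref{asum: simple eigenvalues} rules out the degeneracies that could otherwise make the prefactor vanish. The only minor care is to notice that the square-root symbol $r_{i,k}^1$ is used on the boundary between Case~(a) and Case~(b) and is determined only up to sign, which is harmless since $\cosh$ is even.
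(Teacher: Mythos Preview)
Your proposal is correct and follows exactly the approach indicated in the paper, which simply states that the lemma is inferred from the Hautus condition $c_{i,k}\neq 0$ together with the definition \eqref{eq: coeff projection measurement bounded operator} and the explicit expression of $\phi_{i,k}^1$ from \eqref{eq: simple eig phi}. In fact you supply more detail than the paper does, in particular the verification via Assumption~\ref{asum: simple eigenvalues} that the prefactor $\prod_{j'=1}^{i-1} r_{i,k}^{j'}\sinh(r_{i,k}^{j'})$ does not vanish.
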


We are now in a position to define our output feedback control strategy. Let $n_{i} \geq n_{0,i}$ be integers to be chosen large enough, later on. We define the following controller dynamics (see \cite{sakawa1983feedback}):
\begin{subequations}\label{eq: simple eig output feedback control strategy}
\begin{align}
\dot{u} & = v \\
\dot{\hat{X}}_1 & = A_1 \hat{X}_1 + B_{1,u} u + B_{1,v} v
 - L \left( C_1 \hat{X}_1 + \sum_{i=1}^N \sum_{k = n_{0,i}}^{n_{i}} c_{i,k} \hat{x}_{i,k} - z \right) \\
\dot{\hat{x}}_{i,k} & = \lambda_{i,k} \hat{x}_{i,k} + \alpha_{i,k} u + \beta_{i,k} v  ,\quad 1 \leq i \leq N ,\quad n_{0,i} \leq k \leq n_{i} \\
v & = -K_x \hat{X}_1 - k_u u 
\end{align}
\end{subequations}
where, setting $n_{0} = \sum_{i=1}^N n_{0,i}$, $k_u\in\mathbb{R}$ and $K_x \in\mathbb{R}^{1\times n_{0}}$ are the feedback gains while $L \in \mathbb{R}^{n_{0}}$ is the observer gain. 


\begin{thm}\label{thm1}
Let $a_1,\ldots,a_N \in\mathbb{R}$ be such that Assumption~\ref{asum: simple eigenvalues} holds and let $c \in L^2(0,1)$. Let $\delta > 0$ be arbitrary. For any $1 \leq i \leq N$, let $n_{0,i}\in\mathbb{N}$ be such that $\lambda_{i,n_{0,i}} < - \delta$. Assuming that the observability assumption \eqref{eq: obsv cond distinct eig} holds, let $K = \begin{bmatrix} K_x & k_u \end{bmatrix}\in\mathbb{R}^{1\times(n_{0}+1)}$ and $L \in \mathbb{R}^{n_{0}}$ be such that $A_{1,a}-B_{1,a}K$ and $A_1 - L C_1$ are Hurwitz with spectral abscissa less than $-\delta$. 

Then, for large enough integers $n_{i} \geq n_{0,i}$, $1 \leq i \leq N$, there exists $C > 0$ such that, for all initial conditions $y_0^j \in L^2(0,1)$, $1 \leq j \leq N$, $u(0)=u_0\in\mathbb{R}$, $\hat{X}_1(0)\in\mathbb{R}^{n_{0}}$, and $\hat{x}_{i,k}(0)\in\mathbb{R}$, the solutions of the closed-loop system consisting of the PDE cascade \eqref{eq: studied PDE cascade}, the output \eqref{eq: bounded measurement} and the controller \eqref{eq: simple eig output feedback control strategy} satisfy
\begin{multline}\label{eq: stability estimate}
\Big\Vert (y^j(t,\cdot))_{1 \leq j \leq N} \Big\Vert_{Y} + \vert u(t) \vert + \Vert \hat{X}_1(t) \Vert + \sum_{i=1}^N \sum_{k = n_{0,i}}^{n_{i}} \vert \tilde{x}_{i,k}(t) \vert  \\
\leq C e^{-\delta t} \left( \Big\Vert (y_0^j)_{1 \leq j \leq N} \Big\Vert_{Y} + \vert u_0 \vert + \Vert \hat{X}_1(0) \Vert 
+ \sum_{i=1}^N \sum_{k = n_{0,i}}^{n_{i}} \vert \tilde{x}_{i,k}(0) \vert \right) 
\end{multline} 
for every $t\geq 0$, with $Y = \mathcal{H}^0$ defined by \eqref{eq: Hilbert space H^0}. 

If the initial condition is such that $y_0^j \in H^2(0,1)$ with $(y^j)'(0) = y^{j+1}(1)$ and $(y^j)'(1) = 0$  for all $1 \leq j \leq N-1$ and $(y^N)'(0) = u(0)$ and $(y^N)'(1) = 0$, the conclusion of the theorem holds with $Y = \mathcal{H}^1$ defined by \eqref{eq: def space H^1}.
\end{thm}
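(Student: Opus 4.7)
The approach is to project the closed-loop system onto the Riesz basis of eigenvectors from Lemma~\ref{eq: distinct eig - Riesz basis}, separate the unstable low-frequency modes (the first $n_{0,i}$ per channel) from an exponentially stable tail, and close a Lyapunov argument combining a finite-dimensional state-feedback/observer design with an infinite-dimensional tail estimate.

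First, I introduce the observer errors $\epsilon_1 = X_1 - \hat{X}_1 \in \mathbb{R}^{n_{0}}$, $\epsilon_{i,k} = \tilde{x}_{i,k} - \hat{x}_{i,k}$ for $n_{0,i} \leq k \leq n_i$, and set $\bar{E} = \mathrm{col}(\epsilon_1, 0)$ so that $v = -K(X_{1,a} - \bar{E})$. Combining \eqref{eq: augmented finite dim dynamics first modes}, \eqref{eq: dynamics modes homogeneous coordinates}, \eqref{eq: output series expansion}, and \eqref{eq: simple eig output feedback control strategy}, the closed-loop dynamics read
\begin{align*}
\dot{X}_{1,a} &= (A_{1,a} - B_{1,a} K) X_{1,a} + B_{1,a} K \bar{E}, \\
\dot{\epsilon}_1 &= (A_1 - L C_1) \epsilon_1 - L \sum_{i=1}^N\sum_{k=n_{0,i}}^{n_i} c_{i,k} \epsilon_{i,k} - L \sum_{i=1}^N\sum_{k > n_i} c_{i,k} \tilde{x}_{i,k}, \\
\dot{\epsilon}_{i,k} &= \lambda_{i,k} \epsilon_{i,k}, \qquad n_{0,i} \leq k \leq n_i, \\
\dot{\tilde{x}}_{i,k} &= \lambda_{i,k} \tilde{x}_{i,k} + \alpha_{i,k} u + \beta_{i,k} v, \qquad k > n_i.
\end{align*}
The crucial structural point is that the intermediate mode errors $\epsilon_{i,k}$ form \emph{decoupled} scalar equations with $\lambda_{i,k} < -\delta$, hence decay at rate $\delta$ autonomously; the only impact of the infinite-dimensional tail on the finite block is the residual $-L \sum_{k > n_i} c_{i,k} \tilde{x}_{i,k}$ entering $\dot{\epsilon}_1$.

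Let $P, Q \succ 0$ solve the Lyapunov equations associated to $A_{1,a}-B_{1,a} K$ and $A_1 - L C_1$ with decay rate $2\delta$, and consider
$$
V = X_{1,a}^\top P X_{1,a} + \gamma_1 \epsilon_1^\top Q \epsilon_1 + \gamma_2 \sum_{i=1}^N\sum_{k = n_{0,i}}^{n_i} \epsilon_{i,k}^2 + \gamma_3 \sum_{i=1}^N\sum_{k > n_i} \tilde{x}_{i,k}^2.
$$
Differentiating along trajectories and applying Young's inequality on each cross term — the $B_{1,a} K \bar{E}$ perturbation of the finite block, the couplings $\epsilon_1^\top Q L \sum c_{i,k} \epsilon_{i,k}$ and $\epsilon_1^\top Q L \sum_{k > n_i} c_{i,k} \tilde{x}_{i,k}$, and the tail driving term $\tilde{x}_{i,k}(\alpha_{i,k} u + \beta_{i,k} v)$ with $u, v$ bounded in terms of $\|X_{1,a}\| + \|\epsilon_1\|$ — yields an estimate of the form $\dot V + 2\delta V \leq -\eta_0 \|X_{1,a}\|^2 - \eta_1 \|\epsilon_1\|^2 - \eta_2 \sum \epsilon_{i,k}^2 - \eta_3 \sum_{k > n_i} \tilde{x}_{i,k}^2$ with constants $\eta_j > 0$, provided the weights $\gamma_3, \gamma_2, \gamma_1$ are successively chosen small enough and then $n_i$ is taken large enough. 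The last step relies on the Bessel property of $\Psi$ from Lemma~\ref{eq: distinct eig - Riesz basis}, which gives $(c_{i,k}), (\alpha_{i,k}), (\beta_{i,k}) \in \ell^2$ (the measurement kernel $c$ being extended trivially on the other components of $\mathcal{H}^0$), so that the tail residual $\sum_{i,k > n_i}(c_{i,k}^2 + \alpha_{i,k}^2 + \beta_{i,k}^2)$ tends to $0$ as $\min_i n_i \to \infty$. Integrating yields $V(t) \leq e^{-2\delta t} V(0)$.

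The Riesz basis equivalence $\|\tilde{\mathcal{X}}\|_{\mathcal{H}^0}^2 \asymp \sum_{i,k} \tilde{x}_{i,k}^2$ from Lemma~\ref{eq: distinct eig - Riesz basis}, combined with the change of variables \eqref{eq: change of variable abstract} and the identities $\hat{X}_1 = X_1 - \epsilon_1$, $\hat{x}_{i,k} = \tilde{x}_{i,k} - \epsilon_{i,k}$, then gives \eqref{eq: stability estimate} for $Y = \mathcal{H}^0$. For the $\mathcal{H}^1$ case, the compatibility conditions on $y_0^j$ ensure a classical solution with $\sum (1+k^2\pi^2)\tilde{x}_{i,k}(0)^2 < \infty$, and the same scheme closes with the weighted functional
$$
V = X_{1,a}^\top P X_{1,a} + \gamma_1 \epsilon_1^\top Q \epsilon_1 + \gamma_2 \sum (1+k^2\pi^2) \epsilon_{i,k}^2 + \gamma_3 \sum (1+k^2\pi^2) \tilde{x}_{i,k}^2,
$$
concluded via the $\mathcal{H}^1$ Riesz basis of Lemma~\ref{lem: simple eig Riesz basis H1}. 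The main obstacle is the interlocked ordering of the constants: $\gamma_3$ must be small enough for the tail residual entering $\dot{\epsilon}_1$ to be absorbable through Young's inequality into $\gamma_1 \|\epsilon_1\|^2$, yet large enough that the negative term $\gamma_3 \sum_{k>n_i} |\lambda_{i,k}|\tilde{x}_{i,k}^2$ dominates the driving term $\tilde{x}_{i,k}(\alpha_{i,k} u + \beta_{i,k} v)$; in the $\mathcal{H}^1$ version the extra $(1+k^2\pi^2)$ weight is compensated by $|\lambda_{i,k}| \sim k^2\pi^2$, which is precisely what makes the balance work.
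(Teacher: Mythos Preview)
Your overall strategy---modal projection, a Lyapunov functional combining a finite-dimensional block with an $\ell^2$ tail, Young's inequality on cross terms, and closing via the vanishing tail sums $\sum_{k>n_i}(c_{i,k}^2+\alpha_{i,k}^2+\beta_{i,k}^2)\to 0$---matches the paper's. But there is a concrete omission: your functional $V$ contains $\epsilon_{i,k}$ for $n_{0,i}\le k\le n_i$ but neither $\hat{x}_{i,k}$ nor $\tilde{x}_{i,k}$ in that range. Since $\tilde{x}_{i,k}=\hat{x}_{i,k}+\epsilon_{i,k}$ and you control only one summand, the inequality $V(t)\le e^{-2\delta t}V(0)$ does \emph{not} yield the full PDE norm $\sum_{i,k}\tilde{x}_{i,k}^2$, nor the controller states $\hat{x}_{i,k}$ required by \eqref{eq: stability estimate}. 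The paper avoids this by packing $\hat{X}_2=\mathrm{col}\big(\hat{x}_{i,k}\big)_{n_{0,i}\le k\le n_i}$ into the finite-dimensional state $X=\mathrm{col}(\hat{X}_{1,a},E_1,\hat{X}_2,E_2)$ and using a single Lyapunov matrix $P$ for the whole block. In your scheme the fix is cheap: add a term $\gamma_4\sum_{n_{0,i}\le k\le n_i}\hat{x}_{i,k}^2$; since $\dot{\hat{x}}_{i,k}=\lambda_{i,k}\hat{x}_{i,k}+\alpha_{i,k}u+\beta_{i,k}v$ with $\lambda_{i,k}<-\delta$ has the same structure as the tail modes, the same Young argument handles it.

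A second issue: the prescription ``$\gamma_3,\gamma_2,\gamma_1$ successively chosen small enough'' has the wrong direction for $\gamma_1$ and $\gamma_2$. The cross term $2X_{1,a}^\top P B_{1,a}K\bar E$ deposits a positive contribution $C\|\epsilon_1\|^2$ with $C$ fixed (independent of the $\gamma_j$), which must be absorbed by $-\gamma_1\|\epsilon_1\|^2$; this forces $\gamma_1$ \emph{large}. Likewise $\gamma_2$ must be large enough to absorb the $\gamma_1$-weighted spill from $\dot\epsilon_1$ into $\sum\epsilon_{i,k}^2$. Only after $\gamma_1,\gamma_2,\gamma_3$ are fixed does taking $n_i$ large close the $V_4$ balance via $S_c\to 0$. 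The paper sidesteps this weight cascade entirely by treating the full finite block with one matrix $F$ and one $P$, and invoking a lemma (Appendix of \cite{lhachemi2020finite}) guaranteeing $\|P\|=\mathrm{O}(1)$ as the $n_i\to\infty$; this leaves a single Schur-complement condition whose feasibility for large $n_i$ is then direct. Your block-by-block route is more elementary and avoids that external lemma, but it requires getting the weight hierarchy right.
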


\begin{proof}
Define $e_{i,k} = \tilde{x}_{i,k} - \hat{x}_{i,k}$ and
\begin{align*}
E_{1,i} & = \begin{bmatrix} e_{i,0} & e_{i,1} & \ldots & e_{i,n_{0,i}-1} \end{bmatrix} , \quad
E_1 = \mathrm{col}( E_{1,1} , E_{1,2} , \ldots , E_{1,N} ) , \\
\hat{X}_{2,i} & = \begin{bmatrix} \hat{x}_{i,n_{0,i}} & \hat{x}_{i,n_{0,i}+1} & \ldots & \hat{x}_{i,n_{i}} \end{bmatrix} , \quad
\hat{X}_2 = \mathrm{col}( \hat{X}_{2,1} , \hat{X}_{2,2} , \ldots , \hat{X}_{2,N} ) , \\
E_{2,i} & = \begin{bmatrix} (n_{0,i}+1)^\kappa e_{i,n_{0,i}} & & \ldots & (n_{i}+1)^\kappa e_{i,n_{i}} \end{bmatrix} , \quad
E_2 = \mathrm{col}( E_{2,1} , E_{2,2} , \ldots , E_{2,N} ) , \\
\hat{X}_{1,a} & = \mathrm{col}(\hat{X}^1,u), \quad X = \mathrm{col}( \hat{X}_{1,a} , E_1 , \hat{X}_2 , E_2 ) .
\end{align*}
In this proof we set $\kappa = 0$. Different values will be set later on in the proofs of Theorems~\ref{thm2} and~\ref{thm3}. Combining \eqref{eq: dynamics modes homogeneous coordinates}, \eqref{eq: finite dim dynamics first modes}, \eqref{eq: augmented finite dim dynamics first modes}, \eqref{eq: output series expansion} and \eqref{eq: simple eig output feedback control strategy}, we obtain
\begin{equation}\label{eq: dynamics truncated model simple eig}
\dot{X} = F X + \mathcal{L}\sum_{i=1}^N \zeta_i
\end{equation}
where the measurement residues are defined by
\begin{equation}\label{eq: residues of measurement}
\zeta_i = \sum_{k \geq n_{i}+1} c_{i,k} \tilde{x}_{i,k} , \quad 1 \leq i \leq N,
\end{equation}
and 
$$
F  = \begin{bmatrix}
A_{1,a} - B_{1,a} K & L_a C_1 & 0 & L_a C_2 \\
0 & A_1-LC_1 & 0 & -L C_2 \\
\left[ 0 \; B_{2,u} \right] - B_{2,v} K & 0 & A_2 & 0 \\
0 & 0 & 0 & A_2
\end{bmatrix} , 
\quad
\mathcal{L} = \mathrm{col}( L_a , - L , 0 , 0 ) ,
$$
where
\begin{align*}
K & = \begin{bmatrix} K_x & k_u \end{bmatrix} , \quad
L_a = \mathrm{col}(L,0) , \\
A_{2,i} & = \mathrm{diag}(\lambda_{i,n_{0,i}},\lambda_{i,n_{0,i}+1},\ldots,\lambda_{i,n_{i}}) , \quad
A_2 = \mathrm{diag}(A_{2,1},A_{2,2},\ldots,A_{2,N}) , \\
B_{2,u,i} & = \begin{bmatrix} \alpha_{i,n_{0,i}} & \alpha_{i,n_{0,i}+1} & \ldots & \alpha_{i,n_{i}} \end{bmatrix}^\top , \\
B_{2,u} & = \mathrm{col}(B_{2,u,1},B_{2,u,2},\ldots,B_{2,u,N}) , \quad
B_{2,v,i} = \begin{bmatrix} \beta_{i,n_{0,i}} & \beta_{i,n_{0,i}+1} & \ldots & \beta_{i,n_{i}} \end{bmatrix}^\top , \\
B_{2,v} & = \mathrm{col}(B_{2,v,1},B_{2,v,2},\ldots,B_{2,v,N}) , \\
C_{2,i} & = \begin{bmatrix} \frac{c_{i,n_{0,i}}}{(n_{0,i}+1)^\kappa} & \frac{c_{i,n_{0,i}+1}}{(n_{0,i}+2)^\kappa} & \ldots & \frac{c_{i,n_{i}}}{(n_{i}+1)^\kappa} \end{bmatrix} , \quad
C_2 = \begin{bmatrix} C_{2,1} & C_{2,2} & \ldots & C_{2,N} \end{bmatrix} .
\end{align*}
We recall that $\kappa = 0$, which ensures that $\Vert C_{2,i} \Vert = \mathrm{O}(1)$ as $n_{i} \rightarrow + \infty$. Finally the input $u$ and its time derivative $v$ are 
\begin{equation}\label{eq: express u and v}
u = E \hat{X}_{1,a} = \tilde{E} X , \quad
v = - K \hat{X}_{1,a} = - \tilde{K} X ,
\end{equation}
where $E = \begin{bmatrix} 0 & \ldots & 0 & 1 \end{bmatrix}$, $\tilde{E} = \begin{bmatrix} E & 0 & 0 & 0 \end{bmatrix}$, and $\tilde{K} = \begin{bmatrix} K & 0 & 0 & 0 \end{bmatrix}$.

To establish the claimed stability estimate \eqref{eq: stability estimate} in $\mathcal{H}^0$ norm, we define the Lyapunov functional candidate
\begin{equation}\label{eq: output-feedback Lyap H0}
V = X^\top P X + \sum_{i=1}^{N} \sum_{k \geq n_{i}+1} \tilde{x}_{i,k}^2
\end{equation}  
for some matrix $P \succ 0$. The equivalence between \eqref{eq: output-feedback Lyap H0} and the (squared) $\mathcal{H}^0$ norm holds true by Lemma~\ref{eq: distinct eig - Riesz basis}. In case of classical solutions, establishing the stability estimate \eqref{eq: stability estimate} in $\mathcal{H}^1$ norm is obtained thanks to the Lyapunov functional candidate
\begin{equation}\label{eq: output-feedback Lyap H1}
V = X^\top P X + \sum_{i=1}^{N} \sum_{k \geq n_{i}+1} (1+k^2) \tilde{x}_{i,k}^2 
\end{equation}
for some matrix $P \succ 0$.  The equivalence between \eqref{eq: output-feedback Lyap H1} and the $\mathcal{H}^1$ norm holds true by Lemma~\ref{lem: simple eig Riesz basis H1}. Indeed, using \eqref{eq: dinstinct eig - trajectorie series expansion}, we have
\begin{align}
\tilde{\mathcal{X}}(t) & = \sum_{i=1}^N \sum_{k \geq 0} \sqrt{1+k^2 \pi^2} \tilde{x}_{i,k}(t) \frac{1}{\sqrt{1+k^2 \pi^2}}\phi_{i,k} . \label{eq: series expansion H1 norm}
\end{align}
This series converges \emph{a priori} in $\mathcal{H}^0$ norm. But, based on Lemma~\ref{eq: distinct eig - Riesz basis}, the series also converges in $\mathcal{H}^1$ norm for classical solutions because $\tilde{\mathcal{X}}(t,\cdot) \in D(\mathcal{A})$. Using again Lemma~\ref{eq: distinct eig - Riesz basis}, we infer that $\Vert \tilde{\mathcal{X}}(t,\cdot) \Vert_{\mathcal{H}^1}^2$ is equivalent to $\sum_{i=1}^{N}\sum_{k \geq 0} (1+k^2)\tilde{x}_{i,k}(t)^2$. This justifies the definition \eqref{eq: output-feedback Lyap H1} for $V$.

Establishing the stability in $\mathcal{H}^0$ or in $\mathcal{H}^1$ norm now follows a similar reasoning, and we focus on the $\mathcal{H}^1$ norm. The computation of the time derivative of $V$ defined by \eqref{eq: output-feedback Lyap H1} along the solutions of \eqref{eq: dynamics modes homogeneous coordinates} and \eqref{eq: dynamics truncated model simple eig}, combined with the use of Young's inequality, gives
	\begin{align*}
	\dot{V}+2\delta V 
	& = \tilde{X}^\top 
	\begin{bmatrix} F^\top P + P F + 2 \delta P & P\mathcal{L} & \ldots & P\mathcal{L} \\  
	\mathcal{L}^\top P & 0 & \ldots & 0 \\ 
	\vdots & \vdots & \ddots & \vdots \\
	\mathcal{L}^\top P & 0 & \ldots & 0 \end{bmatrix}	 
	\tilde{X} \\
	& \phantom{=}\; + 2 \sum_{i=1}^N \sum_{k \geq n_{i} +1 } (1+k^2) (\lambda_{i,k}+\delta) \tilde{x}_{i,k}^2 
	+ 2 \sum_{i=1}^N \sum_{k \geq n_{i} + 1} (1+k^2) \tilde{x}_{i,k} ( \alpha_{i,k} u + \beta_{i,k} v ) \\
	& \leq \tilde{X}^\top 
	\begin{bmatrix} \Theta_{1,1} & P\mathcal{L} & \ldots & P\mathcal{L} \\  
	\mathcal{L}^\top P & 0 & \ldots & 0 \\ 
	\vdots & \vdots & \ddots & \vdots \\
	\mathcal{L}^\top P & 0 & \ldots & 0 \end{bmatrix}	
	\tilde{X} 
	+ 2 \sum_{i=1}^N \sum_{k \geq n_{i} +1 } (1+k^2) \left( \lambda_{i,k} + \frac{1+k^2}{\epsilon} + \delta \right) \tilde{x}_{i,k}^2 
	\end{align*}
	for any $\epsilon > 0$, with $\tilde{X} = \mathrm{col}(X,\zeta_1,\zeta_2,\ldots,\zeta_N)$ and
	$$
	\Theta_{1,1} = F^\top P + P F + 2 \delta P 
	+ \epsilon \left( \sum_{1=1}^N S_{\alpha_i,n_{i}} \tilde{E}^\top \tilde{E} + \sum_{1=1}^N S_{\beta_i,n_{i}} \tilde{K}^\top \tilde{K} \right)
	$$
	where we have used \eqref{eq: express u and v} and we have defined $S_{\alpha_i,n_{i}} = \sum_{k \geq n_{i} +1} \alpha_{i,k}^2 < \infty$ and $S_{\beta_i,n_{i}} = \sum_{k \geq n_{i} +1} \beta_{i,k}^2 < \infty$. Using \eqref{eq: residues of measurement} and the Cauchy-Schwarz inequality, we infer that 
	\begin{equation*}
		\zeta_i^2 \leq \underbrace{\sum_{k \geq n_{i}+1} c_{i,k}^2}_{= S_{\zeta_i,n_{i}} < \infty} \sum_{k \geq n_{i}+1} \tilde{x}_{i,k}^2 .
	\end{equation*}	
	 Combining the two latter estimates, we get
	\begin{equation}\label{eq: dotV}
	\dot{V}+2\delta V 
	\leq \tilde{X}^\top \Theta_1 \tilde{X} +\sum_{i=1}^N \sum_{k \geq n_{i} +1 } (1+k^2) \Gamma_{i,k} \tilde{x}_{i,k}^2 
	\end{equation}
	where
	\begin{subequations}
	\begin{align}
	\Theta_1 & = \begin{bmatrix} \Theta_{1,1} & P\mathcal{L} & \ldots & P\mathcal{L} \\  
	\mathcal{L}^\top P & -\eta_1 & \ldots & 0 \\ 
	\vdots & \vdots & \ddots & \vdots \\
	\mathcal{L}^\top P & 0 & \ldots & -\eta_N \end{bmatrix} , \label{eq: Theta_1} \\
	\Gamma_{i,k} & = 2 \left( \lambda_{i,k} + \frac{1+k^2}{\epsilon} + \delta \right) + \frac{\eta_i S_{\zeta_i,n_{i}}}{1+k^2} 
	= 2 \left( - k^2 \left( \pi^2 - \frac{1}{\epsilon} \right) + a_i + \delta + \frac{1}{\epsilon} \right) + \frac{\eta_i S_{\zeta_i,n_{i}}}{1+k^2} \label{eq: Gamma_i_k}
	\end{align}
	\end{subequations}
	for any $\epsilon,\eta_1,\ldots,\eta_N > 0$. In particular, for all $\epsilon > 1/\pi^2$, we have $\Gamma_{i,k} \leq \Gamma_{i,n_{i}+1}$ for all $k \geq n_{i}+1$. We thus infer that
	\begin{align*}
	\dot{V}+2\delta V 
	& \leq \tilde{X}^\top \Theta_1 \tilde{X} 
	+ \sum_{1 \leq i \leq N} \Gamma_{i,n_{i}+1} \sum_{k \geq n_{i}+1} (1+k^2) \tilde{x}_{i,k}^2
	\end{align*}
	for all $\epsilon > 1/\pi^2$ and all $\eta_1,\ldots,\eta_N > 0$. Hence $\dot{V}+2\delta V \leq 0$, giving the claimed stability estimate \eqref{eq: stability estimate}, provided that there exist integers $n_{i} \geq n_{0,i} + 1$, real numbers $\epsilon > 1/\pi^2$ and $\eta_1,\ldots,\eta_N > 0$, and a matrix $P \succ 0$ such that
	\begin{equation*}
	\Theta_1 \preceq 0 , \quad
	 \Gamma_{i,n_{i}+1} \leq 0 , \quad 1 \leq i \leq N .
	\end{equation*}
	Let us prove the feasibility of these constraints. Recall that $F$ is Hurwitz with spectral abscissa less than $-\delta$. Note also that $\Vert C_2 \Vert = \mathrm{O}(1)$, $\Vert B_{2,u} \Vert = \mathrm{O}(1)$, and $\Vert B_{2,v} \Vert = \mathrm{O}(1)$ as $(n_{i})_{1 \leq i \leq N} \rightarrow + \infty$. Hence, the application of the lemma in Appendix of \cite{lhachemi2020finite} shows that the solution $P \succ 0$ to $F^\top P + P F + 2 \delta P = -I$ is such that $\Vert P \Vert = \mathrm{O}(1)$ as $(n_{i})_{1 \leq i \leq N} \rightarrow + \infty$. We now arbitrarily fix the constant $\epsilon > 1/\pi^2$ and we define
	\begin{equation*}
\eta_i = \left\{\begin{array}{cl}
\frac{1}{\sqrt{S_{\zeta_i,n_{i}}}} & \mathrm{if}\; S_{\zeta_i,n_{i}} \neq 0 \\
n_{i} & \mathrm{otherwise}
\end{array}\right.
, \quad 1 \leq i \leq N .
\end{equation*}
	We then infer that, for all $1 \leq i \leq N$, $\Gamma_{i,n_{i}+1} \rightarrow - \infty$ as $n_{i} \rightarrow +\infty$. Moreover, the Schur complement shows that  $\Theta_1 \preceq 0$ for $n_{1},n_{2},\ldots,n_{N}$ selected sufficiently large. The last claim uses the facts that $\Vert \tilde{E} \Vert = 1$, $\Vert \tilde{K} \Vert = \Vert K \Vert$, and $\Vert \mathcal{L} \Vert = \sqrt{2} \Vert L \Vert$ are constants not depending on $n_{i}$ while $\Vert P \Vert = \mathrm{O}(1)$ as $(n_{i})_{1 \leq i \leq N} \rightarrow + \infty$ and that $S_{\alpha_i,n_{i}},S_{\beta_i,n_{i}} \rightarrow 0$ as $n_{i} \rightarrow +\infty$. The proof is complete.
\end{proof}

\subsection{Extension to pointwise measurements}
\subsubsection{Pointwise measurement of $y^1$} 
In this section, we briefly show how to adapt our previous result to the case of a pointwise measurement of $y^1$ of the form:
\begin{equation}\label{eq: Dirichlet measurement}
	z(t) = y^1(t,\xi_p) 
\end{equation}
for some $\xi_p\in[0,1]$. Using \eqref{eq: dinstinct eig - trajectorie series expansion}, the series expansion \eqref{eq: output series expansion} still holds true but with $c_{i,k} = \phi_{i,k}^1(\xi_p)$. Similarly to Lemma~\ref{eq: simple eig obsv cond}, and in view of the expression of $\phi_{i,k}^1$ provided by \eqref{eq: simple eig phi}, the pair $(A_1,C_1)$ is observable if and only if:
\begin{subequations}\label{eq: obsv cond for Dirichlet}
\begin{align}
& \cos(k\pi\xi_p) \neq 0 , \quad 1 \leq k \leq N_{0,1}-1 , \\
& \cosh(r_{i,k}^{1}(1-\xi_p)) \neq 0 , \quad 2 \leq i \leq N ,\; 0 \leq k \leq n_{0,i}-1 ,
\end{align}
\end{subequations}
where $r_{i,k}^{1}$ is one of the two square roots of $\lambda_{i,k}-a_1$. 

\begin{thm}\label{thm2}
Considering the pointwise measurement \eqref{eq: Dirichlet measurement} for some $\xi_p \in [0,1]$, replacing the observability condition \eqref{eq: obsv cond distinct eig} by \eqref{eq: obsv cond for Dirichlet}, the same statement as Theorem~\ref{thm1} holds true for the Hilbert space $Y = \mathcal{H}^0$ defined by \eqref{eq: Hilbert space H^0}.
\end{thm}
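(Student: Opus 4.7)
The plan is to adapt the proof of Theorem~\ref{thm1} to the unbounded point-evaluation output \eqref{eq: Dirichlet measurement}. The observability of $(A_1,C_1)$ now follows from \eqref{eq: obsv cond for Dirichlet} (by the same argument as in Lemma~\ref{eq: simple eig obsv cond}), while the controllability of $(A_{1,a},B_{1,a})$ is unchanged, so feedback and observer gains $K,L$ achieving a spectral abscissa less than $-\delta$ still exist.

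The only essentially new step is the asymptotic analysis of $c_{i,k} = \phi_{i,k}^1(\xi_p)$. Using the expansion \eqref{eq: estimate phi_i_k^j}, I would obtain $c_{1,k} = \mu_k \cos(k\pi\xi_p)$ and, for $i \geq 2$, $c_{i,k} = A_{i,k}^{1}\varphi_k(\xi_p) + R\phi_{i,k}^1(\xi_p) = \mathrm{O}(1)$ as $k \to \infty$. In contrast to Theorem~\ref{thm1}, where $(c_{i,k})_k$ was $\ell^2$, the sequence is now only bounded. This is precisely the regime for which the positive exponent $\kappa$ was introduced in the rescalings of $E_{2,i}$ and $C_{2,i}$: fixing any $\kappa > 1/2$ makes $\|C_{2,i}\|^2 = \sum_{k=n_{0,i}}^{n_i} c_{i,k}^2/(k+1)^{2\kappa}$ uniformly $\mathrm{O}(1)$ in $n_i$, so the bounds $\|C_2\| = \mathrm{O}(1)$ and $\|P\| = \mathrm{O}(1)$ used in the Schur-complement closing argument of Theorem~\ref{thm1} remain valid.

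The main obstacle is bounding the tail residue $\zeta_i = \sum_{k > n_i} c_{i,k}\tilde x_{i,k}$, because the plain Cauchy--Schwarz of Theorem~\ref{thm1} produces $\sum c_{i,k}^2 = +\infty$. I would replace it by the weighted version
\begin{equation*}
\zeta_i^2 \leq \Bigl(\sum_{k > n_i} \frac{c_{i,k}^2}{(k+1)^{2\kappa}}\Bigr)\Bigl(\sum_{k > n_i} (k+1)^{2\kappa}\tilde x_{i,k}^2\Bigr),
\end{equation*}
whose first factor is finite and vanishes as $n_i \to \infty$. This forces the tail of the Lyapunov functional \eqref{eq: output-feedback Lyap H0} to be reweighted accordingly into $V = X^\top P X + \sum_i \sum_{k > n_i}(k+1)^{2\kappa}\tilde x_{i,k}^2$. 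With this choice, the computation of $\dot V + 2\delta V$ produces a modified $\Gamma_{i,k}$ in which the dissipativity $\lambda_{i,k}\sim -k^2\pi^2$ still dominates the polynomial weight (provided $\kappa$ is fixed small enough, for instance in $(1/2,3/4)$, to keep the cross-terms in $\alpha_{i,k}u$ and $\beta_{i,k}v$ summable), so that $\Gamma_{i,k}\to -\infty$ as $k\to\infty$ and the Schur-complement argument of Theorem~\ref{thm1} closes for $n_i$ large enough.

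The final delicate point is that this $V$ controls a norm strictly stronger than $\mathcal{H}^0$, whereas the statement accepts initial data only in $\mathcal{H}^0$, so that $V(0)$ may be infinite. I would resolve this via the analyticity of the semigroup generated by $\mathcal{A}$ (a consequence of its Riesz spectral structure with real spectrum tending to $-\infty$): for any $t_0 > 0$, $V(t_0)$ is finite with a bound of the form $C(t_0)\|y_0\|_{\mathcal{H}^0}^2$, and combining the Lyapunov exponential decay on $[t_0,+\infty)$ with a standard $C_0$-semigroup a priori bound on $[0,t_0]$ delivers the $\mathcal{H}^0$ stability estimate \eqref{eq: stability estimate}.
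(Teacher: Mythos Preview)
Your approach is correct and is essentially the same as the paper's: introduce a polynomial weight in the tail of the Lyapunov functional and use the matching weighted Cauchy--Schwarz on $\zeta_i$ to compensate for $c_{i,k}=\mathrm{O}(1)$. The paper simply takes $\kappa=1$, which lets it recycle verbatim the $\mathcal{H}^1$ Lyapunov functional \eqref{eq: output-feedback Lyap H1} and the computation \eqref{eq: dotV} already carried out in Theorem~\ref{thm1}; your restriction $\kappa\in(1/2,3/4)$ is unnecessarily cautious, since with the Young splitting used in the paper the cross-terms contribute only $\epsilon u^2\sum_k\alpha_{i,k}^2<\infty$ independently of $\kappa$, and the sole constraint is $\kappa\le 1$ so that $\lambda_{i,k}\sim -k^2\pi^2$ dominates $(k+1)^{2\kappa}/\epsilon$. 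Your closing point about the regularity mismatch (the Lyapunov functional controls a norm stronger than $\mathcal{H}^0$, yet initial data are only $L^2$) is a valid concern and the smoothing/density argument you sketch is the right fix; the paper does not spell this step out.
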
 

\begin{proof}
The proof follows the one of Theorem~\ref{thm1} except for the estimation of the measurement residues $\zeta_i$ defined by \eqref{eq: residues of measurement}. Recall that for the pointwise measurement \eqref{eq: Dirichlet measurement} we have defined $c_{i,k} = \phi_{i,k}^1(\xi_p)$. In view of \eqref{eq: simple eig phi} in the case $i = 1$ and \eqref{eq: estimate phi_i_k^j} in the case $2 \leq i \leq N$, we have $c_{i,k}= \mathrm{O}(1)$ as $k \rightarrow +\infty$. Consequently we set $\kappa = 1$ so that we have $\Vert C_{2,i} \Vert = \mathrm{O}(1)$ as $n_{i} \rightarrow + \infty$, hence $\Vert C_{2} \Vert = \mathrm{O}(1)$ as $(n_{i})_{1\leq i \leq N} \rightarrow + \infty$. Moreover we have 
\begin{align*}
	\zeta_i^2 \leq \underbrace{\sum_{k \geq n_{i}+1} \frac{c_{i,k}^2}{1+k^2}}_{= S_{\zeta_i,n_{i}} < \infty} \sum_{k \geq n_{i}+1} (1+k^2) \tilde{x}_{i,k}^2 .
\end{align*}	
We thus infer that \eqref{eq: dotV} holds with
$$
	\Gamma_{i,k} = 2 \left( \lambda_{i,k} + \frac{1+k^2}{\epsilon} + \delta \right) + \eta_i S_{\zeta_i,n_{i}} 
	= 2 \left( - k^2 \left( \pi^2 - \frac{1}{\epsilon} \right) + a_i + \delta + \frac{1}{\epsilon} \right) + \eta_i S_{\zeta_i,n_{i}} 
$$
while $\Theta_1$ is still given by \eqref{eq: Theta_1}. The end of the proof follows the arguments of the proof of Theorem~\ref{thm1}. 
\end{proof}

\subsubsection{Pointwise measurement of $y^1_x$}\label{subsubsec: Neumann measurement} 

The case of a pointwise measurement of $y^1_x$ of the form:
\begin{equation}\label{eq: Neumann measurement}
	z(t) = y^1_x(t,\xi_p) ,
\end{equation}
for some $\xi_p\in[0,1]$, is more stringent. This is due to the fact that the system consisting of the PDE cascade  \eqref{eq: studied PDE cascade} and the output \eqref{eq: Neumann measurement} is not observable. Indeed, consider the initial condition $y_0^1(x) = 1$ and $y_0^i(x) = 0$ for $2 \leq i \leq N$. We infer that the solution of \eqref{eq: studied PDE cascade} is $y^1(t,x) = e^{a_1 t}$ and $y^i(t,x) = 0$ for $2 \leq i \leq N$. In particular, \eqref{eq: Neumann measurement} gives an identically zero measurement $z=0$ for a nontrivial state trajectory. This remark highlights the fact that if $a_1 \geq 0$, which corresponds to the case of a cascade \eqref{eq: studied PDE cascade} that is not open-loop exponentially stable since $\lambda_{1,0} = a_1 \geq 0$, then the pointwise measurement of $y_x$ cannot be used to design an output feedback control strategy  for stabilizing the PDE cascade \eqref{eq: studied PDE cascade}. 

However, the approach developed in this paper still presents some merits in the case of the pointwise measurement \eqref{eq: Neumann measurement} if we assume moreover that the last component of the PDE cascade \eqref{eq: studied PDE cascade}, described by $y^1$, is open-loop exponentially stable. Indeed, assume that $a_1 < 0$ and let us choose the targeted exponential decay rate for the closed-loop system trajectories as $\delta \in (0, -a_1)$. Then, following the construction reported in the previous subsection, no mode $\lambda_{1,k}$ has to be included in the construction of the matrices to buid the finite-dimensional dynamics \eqref{eq: finite dim dynamics first modes}. This is achieved by setting $N_{0,1} = 0$. Then, using \eqref{eq: dinstinct eig - trajectorie series expansion}, the series expansion \eqref{eq: output series expansion} still holds true but with $c_{i,k} = (\phi_{i,k}^1)'(\xi_p)$. Now, similarly to Lemma~\ref{eq: simple eig obsv cond} and in view of the expression of $\phi_{i,k}^1$ provided by \eqref{eq: simple eig phi}, the pair $(A_1,C_1)$ is observable if and only if:
\begin{equation}\label{eq: obsv cond for Neumann}
\sinh(r_{i,k}^{1}(1-\xi_p)) \neq 0 , \quad 2 \leq i \leq N ,\quad 0 \leq k \leq n_{0,i}-1 ,
\end{equation}
where $r_{i,k}^{1}$ is one of the two square roots of $\lambda_{i,k}-a_1$. Note that Assumption~\ref{asum: simple eigenvalues} implies that $r_{i,k}^{1} \neq 0$ for $i \geq 2$. 

\begin{rem}
The fact that the mode $\lambda_{1,0} = a_1$ is never observable for the pointwise measurement \eqref{eq: Neumann measurement} is reflected by the fact that $\phi_{1,0}^1(x) = 1$, hence $c_{1,0} = (\phi_{1,0}^1)'(\xi_p) = 0$.
\end{rem}


\begin{thm}\label{thm3}
In the setting of Theorem~\ref{thm2}, we further assume that $a_1 < 0$ and we choose $\delta \in (0,-a_1)$, we set $N_{0,1} = 0$, the observability assumption \eqref{eq: obsv cond for Dirichlet} is replaced by \eqref{eq: obsv cond for Neumann}, the system output \eqref{eq: Dirichlet measurement} is replaced by the Neumann measurement \eqref{eq: Neumann measurement}.
Then the statement of Theorem~\ref{thm2} holds true.
\end{thm}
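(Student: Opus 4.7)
The plan is to adapt the proof of Theorem~\ref{thm2}, with three ingredients tailored to the Neumann output \eqref{eq: Neumann measurement}.

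First, because $a_1 < 0$ and $\delta \in (0,-a_1)$, every mode $\lambda_{1,k} = a_1 - k^2\pi^2$ satisfies $\lambda_{1,k} \leq a_1 < -\delta$, so no mode of the first PDE needs to be included in the finite-dimensional observer/controller. Setting $n_{0,1} = 0$ simply removes the corresponding blocks $A_{1,1}, B_{1,u,1}, B_{1,v,1}, C_{1,1}$ from the construction of Theorems~\ref{thm1}--\ref{thm2}. The Hautus test on the remaining pair $(A_1, C_1)$ with $c_{i,k} = (\phi_{i,k}^1)'(\xi_p)$ then reduces to \eqref{eq: obsv cond for Neumann}; the intrinsic non-observability of $\lambda_{1,0}$ pointed out in the preceding remark is harmless precisely because that mode has been deliberately excluded.

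Second, I would establish the asymptotic $c_{i,k} = \mathrm{O}(k)$ as $k \to \infty$. For $i = 1$, the identity $\phi_{1,k}^1 = \varphi_k$ gives $c_{1,k} = \varphi_k'(\xi_p) = -k\pi\mu_k \sin(k\pi\xi_p) = \mathrm{O}(k)$. For $i \geq 2$, equation \eqref{eq: estimate diff_phi} yields $(\phi_{i,k}^1)'(x) = A_{i,k}^1 \varphi_k'(x) + \mathrm{O}(1)$, so $c_{i,k} = A_{i,k}^1 \varphi_k'(\xi_p) + \mathrm{O}(1) = \mathrm{O}(k)$ as well. Since this growth is one order worse than in Theorem~\ref{thm2}, I would take $\kappa = 2$ in the construction of $E_2$ and $C_2$, for which
\[
\Vert C_{2,i}\Vert^2 \leq \sum_{k \geq n_{0,i}} \frac{c_{i,k}^2}{(k+1)^{2\kappa}} = \mathrm{O}\!\Big(\sum_{k} \frac{1}{k^2}\Big) < \infty ,
\]
keeping $\Vert C_2 \Vert$ uniformly bounded as $n_i \to \infty$.

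Third, and this is the main obstacle, one has to control the residue $\zeta_i = \sum_{k \geq n_i+1} c_{i,k}\tilde x_{i,k}$ in the Lyapunov computation. Because $c_{i,k} = \mathrm{O}(k)$, the $(1+k^2)$-weighted Cauchy--Schwarz bound used in Theorem~\ref{thm2} has a divergent prefactor, and I would replace it by the heavier estimate
\[
\zeta_i^2 \leq \Big(\sum_{k \geq n_i+1} \frac{c_{i,k}^2}{(1+k^2)^2}\Big)\Big(\sum_{k \geq n_i+1} (1+k^2)^2 \tilde x_{i,k}^2\Big),
\]
whose first factor is of order $\sum 1/k^2 < \infty$. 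This forces the Lyapunov functional to carry a $(1+k^2)^2$-weighted tail in place of the $(1+k^2)$-weighted one. The delicate point is then to show that the resulting $\Gamma_{i,k}$ still satisfies $\Gamma_{i,k} \to -\infty$ as $k \to \infty$: the parabolic damping $2\lambda_{i,k}(1+k^2)^2 \sim -2\pi^2 k^2 (1+k^2)^2$ must dominate both the residue contribution and the Young-type bounds on the forcing cross terms $2(1+k^2)^2 \tilde x_{i,k}(\alpha_{i,k}u + \beta_{i,k}v)$; here the decay $\alpha_{i,k}, \beta_{i,k} = \mathrm{O}(1/k^2)$, obtained by integration by parts against $\psi_{i,k}$ (as in the proof of Lemma~\ref{lem: obsv condition dinstinct eig}), is crucial to keep the forcing-term tails summable after the splitting. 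Once these bookkeeping estimates close, feasibility of $\Theta_1 \preceq 0$ for large $n_i$ follows from the same Schur-complement and $\Vert P\Vert = \mathrm{O}(1)$ arguments (via the appendix of \cite{lhachemi2020finite}) as in the proof of Theorem~\ref{thm2}.
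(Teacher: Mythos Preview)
Your setup (removing the $i=1$ block since $\lambda_{1,k}<-\delta$, and the observability reduction to \eqref{eq: obsv cond for Neumann}) is correct, and so is your growth estimate $c_{i,k}=\mathrm{O}(k)$ obtained from \eqref{eq: simple eig phi} and \eqref{eq: estimate diff_phi}. The divergence you observe — that the $(1+k^2)$-splitting of Theorem~\ref{thm2} gives a divergent $S_{\zeta_i,n_i}$ — is also the right diagnosis.

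Where you depart from the paper is in the remedy. The paper does \emph{not} change the Lyapunov weight: it keeps the $(1+k^2)$-weighted tail of Theorem~\ref{thm1} and simply uses a \emph{fractional} Cauchy--Schwarz exponent, taking $\kappa=7/4$ and
\[
\zeta_i^2 \le \underbrace{\sum_{k\ge n_i+1}\frac{c_{i,k}^2}{(1+k^2)^{7/4}}}_{=\,S_{\zeta_i,n_i}=\mathrm{O}(\sum k^{-3/2})<\infty}\ \sum_{k\ge n_i+1}(1+k^2)^{7/4}\tilde x_{i,k}^2 .
\]
Plugged into \eqref{eq: dotV}, the residue then contributes $(1+k^2)^{7/4-1}=(1+k^2)^{3/4}$ to $\Gamma_{i,k}$, which is strictly subquadratic in $k$ and is dominated by $2\lambda_{i,k}\sim -2\pi^2 k^2$ without any further conditions. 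No change of Lyapunov functional, and no extra decay estimates on $\alpha_{i,k},\beta_{i,k}$, are needed.

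Your route — pushing the splitting to $(1+k^2)^2$ and then upgrading the Lyapunov tail to a $(1+k^2)^2$ weight — is an unnecessary detour. First, the claim that ``this forces the Lyapunov functional to carry a $(1+k^2)^2$-weighted tail'' is not correct: even with your $(1+k^2)^2$ splitting, the $(1+k^2)$-weighted Lyapunov still closes (the residue contribution to $\Gamma_{i,k}$ is then of order $(1+k^2)$, absorbed by $-2\pi^2 k^2$ once $\eta_i S_{\zeta_i,n_i}=\sqrt{S_{\zeta_i,n_i}}\to 0$). Second, the additional estimates $\alpha_{i,k},\beta_{i,k}=\mathrm{O}(1/k^2)$ you invoke are not delivered by the argument of Lemma~\ref{lem: obsv condition dinstinct eig} (that lemma gives an exact boundary expression, not a decay rate); they would require a separate two-fold integration-by-parts analysis against $\psi_{i,k}^N$. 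Third, a $(1+k^2)^2$-weighted Lyapunov is equivalent to an $H^2$-type norm, so even if your computation closes it drifts further from the claimed $\mathcal{H}^0$ estimate, whereas the paper stays at the $(1+k^2)$ level throughout.
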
 

\begin{rem}
The constraint $\delta \in (0,-a_1)$ reflects the fact that, in the context of an output feedback, the exponential stability of the closed-loop system, in the sense of \eqref{eq: stability estimate}, is associated with an exponential decay rate $\delta$ that cannot be faster than the slowest unobservable mode $\lambda_{1,0} = a_1 < 0$.
\end{rem}

\begin{proof}
The only difference with the proof of Theorem~\ref{thm2} is in the estimation of the measurement residues $\zeta_i$ defined by \eqref{eq: residues of measurement}. For the pointwise measurement \eqref{eq: Neumann measurement} we have defined $c_{i,k} = (\phi_{i,k}^1)'(\xi_p)$. Based on \eqref{eq: simple eig phi} in the case $i = 1$ and \eqref{eq: estimate diff_phi} in the case $2 \leq i \leq N$, we note that $c_{i,k}= \mathrm{O}(1)$ as $k \rightarrow +\infty$. Thus, we set $\kappa = 7/4$ so that we have $\Vert C_{2,i} \Vert = \mathrm{O}(1)$ as $n_{i} \rightarrow + \infty$, hence $\Vert C_{2} \Vert = \mathrm{O}(1)$ as $(n_{i})_{1\leq i \leq N} \rightarrow + \infty$. We also have 
\begin{align*}
	\zeta_i^2 \leq \underbrace{\sum_{k \geq n_{i}+1} \frac{c_{i,k}^2}{(1+k^2)^{7/4}}}_{= S_{\zeta_i,n_{i}} < \infty} \sum_{k \geq n_{i}+1} (1+k^2)^{7/4} \tilde{x}_{i,k}^2 .
\end{align*}	
Therefore, we infer that \eqref{eq: dotV} holds with
\begin{multline*}
	\Gamma_{i,k} = 2 \left( \lambda_{i,k} + \frac{1+k^2}{\epsilon} + \delta \right) + (1+k^2)^{3/4} \eta_i S_{\zeta_i,n_{i}}  \\
	= 2 \left( - k^2 \left( \pi^2 - \frac{1}{\epsilon} \right) + a_i + \delta + \frac{1}{\epsilon} \right) 
	+ (1+k^2)^{3/4} \eta_i S_{\zeta_i,n_{i}} 
\end{multline*}
while $\Theta_1$ is still given by \eqref{eq: Theta_1}. The proof then follows the same arguments as before.
\end{proof}

\section{Cascade of $N$ identical heat equations}\label{sec: mult eig}
In this section, we address the case of $N$ identical heat equations and we make the following standing assumption:

\begin{assum}\label{assump: identical heat equations}
$\quad a \triangleq a_1 = a_2 = \ldots = a_N$.
\end{assum}

\subsection{Spectral properties}
In the Hilbert space $\mathcal{H}^0$ defined by \eqref{eq: Hilbert space H^0}, we consider the unbounded operator
$\mathcal{A} ( (f^j)_{1 \leq j \leq N} ) = ((f^j)''+a f^j)_{1 \leq j \leq N}$
on the domain defined by \eqref{eq: 2b2dev operator A - domain}. We start by studying the generalized eigenelements of $\mathcal{A}$. 

\begin{lem}\label{lemma: prel for identical heat equations}
Let $k \in\mathbb{N}$, $\gamma\in\mathbb{R}$, and $g_0,g_1 \in L^2(0,1)$ be such that $\int_0^1 \cos(k\pi s) g_1(s) \,\mathrm{d}s \neq 0$. Then, the functions $f:[0,1]\rightarrow\mathbb{R}$ and real numbers $\nu\in\mathbb{R}$ that satisfy
$f'' + k^2 \pi^2 f = g_0 + \nu g_1$, $f'(0) = \gamma$, $f'(1)=0$,
are given by
\begin{equation}\label{eq:ch:june}
\nu = - \frac{\gamma + \int_0^1 g_0(s) \,\mathrm{d}s}{\int_0^1 g_1(s) \,\mathrm{d}s},
\qquad
f(x) = c + \gamma x + \int_0^x (x-s) ( g_0(s) + \nu g_1(s) ) \,\mathrm{d}s,
\end{equation}
if $k = 0$, and by
\begin{equation*}
\begin{split}
\nu &= - \frac{\gamma + \int_0^1 \cos(k\pi s) g_0(s) \,\mathrm{d}s}{\int_0^1 \cos(k\pi s) g_1(s) \,\mathrm{d}s}  ,
\\
f(x) &= c \sqrt{2} \cos(k \pi x) + \frac{\gamma}{k \pi} \sin(k \pi x) 
+ \frac{1}{k\pi} \int_0^x \sin(k \pi (x-s) ) ( g_0(s) + \nu g_1(s) ) \,\mathrm{d}s,
\end{split}
\end{equation*}
if $k \geq 1$, where $c \in\mathbb{R}$ is an arbitrary constant that corresponds to the solutions to the homogeneous ODE (\ref{eq:ch:june}), i.e., when $g_0 = g_1 = 0$ and $\gamma = 0$. 
\end{lem}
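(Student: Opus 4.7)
The plan is to solve the inhomogeneous ODE $f''+k^2\pi^2 f = g_0+\nu g_1$ by direct integration and then to read off the solvability condition at $x=1$, which determines $\nu$. The two cases $k=0$ and $k\geq 1$ are handled separately because the fundamental solutions differ.

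For $k\geq 1$, I would start from the general solution obtained by variation of parameters,
\begin{equation*}
f(x) = A\cos(k\pi x) + B\sin(k\pi x) + \frac{1}{k\pi}\int_0^x \sin\bigl(k\pi(x-s)\bigr)\bigl(g_0(s)+\nu g_1(s)\bigr)\,\mathrm{d}s,
\end{equation*}
whose derivative reads
\begin{equation*}
f'(x) = -Ak\pi\sin(k\pi x) + Bk\pi\cos(k\pi x) + \int_0^x \cos\bigl(k\pi(x-s)\bigr)\bigl(g_0(s)+\nu g_1(s)\bigr)\,\mathrm{d}s.
\end{equation*}
Imposing $f'(0)=\gamma$ immediately yields $B=\gamma/(k\pi)$. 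For the second boundary condition, I would use $\sin(k\pi)=0$, $\cos(k\pi)=(-1)^k$, and the identity $\cos(k\pi(1-s))=(-1)^k\cos(k\pi s)$ to rewrite $f'(1)=0$ as the single scalar equation
\begin{equation*}
\gamma + \int_0^1 \cos(k\pi s)\bigl(g_0(s)+\nu g_1(s)\bigr)\,\mathrm{d}s = 0.
\end{equation*}
The hypothesis $\int_0^1 \cos(k\pi s)g_1(s)\,\mathrm{d}s\neq 0$ is exactly the Fredholm solvability condition that lets me solve uniquely for $\nu$, which gives the stated formula. The remaining free constant $A$ parametrizes the one-dimensional kernel of the Neumann operator spanned by $\cos(k\pi x)$, and renaming $A=c\sqrt{2}$ produces the claimed expression for $f$.

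For $k=0$, the ODE reduces to $f''=g_0+\nu g_1$ and can be integrated twice directly. Writing $f(x)=A+Bx+\int_0^x(x-s)(g_0(s)+\nu g_1(s))\,\mathrm{d}s$, the condition $f'(0)=\gamma$ gives $B=\gamma$, while $f'(1)=0$ yields the solvability relation $\gamma+\int_0^1(g_0(s)+\nu g_1(s))\,\mathrm{d}s=0$, from which $\nu$ is extracted. The constant $A=c$ remains free, corresponding to the constant Neumann eigenfunction.

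The computation is essentially Fredholm alternative for a self-adjoint Neumann Sturm--Liouville operator, so there is no real obstacle beyond bookkeeping; the only point requiring a bit of care is the trigonometric reduction of $\cos(k\pi(1-s))$ in the condition at $x=1$, which is what makes the denominator in the formula for $\nu$ appear as $\int_0^1\cos(k\pi s)g_1(s)\,\mathrm{d}s$ rather than an integral against $\cos(k\pi(1-s))$.
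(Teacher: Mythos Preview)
Your argument is correct and complete. The paper states this lemma without proof, treating it as an elementary ODE computation; your variation-of-parameters derivation, together with the Fredholm-type solvability condition at $x=1$, is precisely the intended justification.
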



\begin{lem}\label{eq: integration in series trough heat equations}
We define a sequence of functions as follows. For $k = 0$, we first set 
\begin{subequations}\label{eq: def sigma_0}
\begin{equation}\label{eq: def sigma_0^1}
	\sigma_0^{1}(x) = 1 .
\end{equation}
Then, we iteratively define, for integers $n \geq 2$,
\begin{equation}\label{eq: def sigma_0^n}
	\sigma_0^{n}(x) = \sigma_0^{n-1}(1) x + \sum_{i=1}^{n-1} \nu_0^i \int_0^x (x-s) \sigma_{0}^{n-i}(s) \,\mathrm{d}s .
\end{equation}
where
\begin{equation}
\nu_0^{n-1} = - \sigma_0^{n-1}(1) - \sum_{i=1}^{n-2} \nu_0^i \int_0^1 \sigma_0^{n-i}(s) \,\mathrm{d}s  .
\end{equation}
\end{subequations}
For positive integers $k \geq 1$, we first set 
\begin{subequations}\label{eq: def sigma_k}
\begin{equation}\label{eq: def sigma_k^1}
	\sigma_k^{1}(x) = \sqrt{2} \cos(k\pi x) .
\end{equation}
Then, we iteratively define, for integers $n \geq 2$,
\begin{equation}
	\sigma_k^{n}(x) = \frac{\sigma_k^{n-1}(1)}{k\pi} \sin(k\pi x) 
	+ \frac{1}{k\pi} \sum_{i=1}^{n-1} \nu_k^i \int_0^x \sin(k\pi(x-s)) \sigma_{k}^{n-i}(s) \,\mathrm{d}s . \label{eq: def sigma_k^n}
\end{equation}
where
\begin{equation}
\nu_k^{n-1} = -\sqrt{2} \left( \sigma_k^{n-1}(1) + \sum_{i=1}^{n-2} \nu_k^i \int_0^1 \cos(k\pi s) \sigma_k^{n-i}(s) \,\mathrm{d}s \right)  .
\end{equation}
\end{subequations}
Then we have, for $n=1$ and for all $k \in \mathbb{N}$,
\begin{subequations}
\begin{equation}\label{eq: edo sigma^1}
(\sigma_k^1)''+k^2\pi^2 \sigma_k^1 = 0 , \quad (\sigma_k^1)'(0)=(\sigma_k^1)'(1)=0 ,
\end{equation}
while, for all integers $n \geq 2$ and for all $k \in \mathbb{N}$,
\begin{align}\label{eq: edo sigma^k}
& (\sigma_k^n)''+k^2\pi^2 \sigma_k^n = \sum_{i=1}^{n-1} \nu_k^i \sigma_k^{n-i} , \\ 
& (\sigma_k^n)'(0)= \sigma_k^{n-1}(1) ,\quad (\sigma_k^n)'(1)=0 . \label{eq: edo sigma^k - 2}
\end{align}
\end{subequations}
In particular, 
\begin{subequations}\label{eq: asymptotic estimate sigma}
\begin{align}
& \Vert \sigma_k^n \Vert_{L^\infty} = \mathrm{O}(1) 
\quad\textrm{and}\quad
\nu_k^n = \mathrm{O}(1) && \textrm{if}\quad n \geq 1, \label{eq: asymptotic estimate sigma - 12} \\
& \Vert \sigma_k^n \Vert_{L^\infty} = \mathrm{O}(1/k) && \textrm{if}\quad n \geq 2 , \label{eq: asymptotic estimate sigma - 3}
\end{align}
\end{subequations}
as $k \rightarrow + \infty$. 
Finally, 
\begin{equation}\label{eq: sigmak polnomial * trig}
\sigma_k^n(x) = P_k^n(x) \cos(k \pi x) + Q_k^n(x) \sin(k \pi x).
\end{equation}
for some polynomials $P_k^n,Q_k^n \in\mathbb{R}[X]$.
\end{lem}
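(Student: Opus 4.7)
The plan is to proceed by induction on $n$, establishing simultaneously the ODE--boundary value problem \eqref{eq: edo sigma^1}--\eqref{eq: edo sigma^k - 2}, the asymptotic bounds \eqref{eq: asymptotic estimate sigma}, and the polynomial-trigonometric decomposition \eqref{eq: sigmak polnomial * trig}. For the ODE and boundary conditions, the $n=1$ case is immediate from $\sigma_k^1(x)=\sqrt{2}\cos(k\pi x)$ (or $\sigma_0^1\equiv 1$). For $n\geq 2$, the formula \eqref{eq: def sigma_k^n} is precisely the $c=0$ particular solution furnished by Lemma~\ref{lemma: prel for identical heat equations} applied with $\gamma=\sigma_k^{n-1}(1)$, $g_0=\sum_{i=1}^{n-2}\nu_k^i\sigma_k^{n-i}$, and $g_1=\sigma_k^1$. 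The compatibility integral of Lemma~\ref{lemma: prel for identical heat equations}, namely $\int_0^1 \cos(k\pi s)\sigma_k^1(s)\,\mathrm{d}s=1/\sqrt{2}$ (or $\int_0^1\sigma_0^1\,\mathrm{d}s=1$ when $k=0$), is nonzero, and the explicit formula for the coefficient $\nu$ appearing in that lemma matches the definition of $\nu_k^{n-1}$.

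For the asymptotic estimates, the base case $n=1$ is trivial, and a direct computation gives $\nu_k^1=-2(-1)^k=\mathrm{O}(1)$. In the inductive step for $n\geq 2$, I would assume $\Vert\sigma_k^m\Vert_{L^\infty}=\mathrm{O}(1/k)$ for $2\leq m\leq n-1$ together with $\nu_k^m=\mathrm{O}(1)$ for $m\leq n-2$. In \eqref{eq: def sigma_k^n}, the leading term $\sigma_k^{n-1}(1)\sin(k\pi x)/(k\pi)$ is $\mathrm{O}(1/k^2)$. Within the sum, the single index $i=n-1$ couples with $\sigma_k^1=\mathrm{O}(1)$ and produces an $\mathrm{O}(1)$ integral, hence an $\mathrm{O}(1/k)$ contribution after the $1/(k\pi)$ prefactor, while every index $i\leq n-2$ pairs the prefactor with an $\mathrm{O}(1/k)$ integral and thus yields $\mathrm{O}(1/k^2)$. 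This gives $\Vert\sigma_k^n\Vert_{L^\infty}=\mathrm{O}(1/k)$, which already subsumes the $\mathrm{O}(1)$ bound for $n\geq 2$. The estimate $\nu_k^n=\mathrm{O}(1)$ is then read off its defining formula, since all summands are uniformly bounded by the induction hypothesis.

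The step I expect to be the most delicate is the polynomial-trigonometric decomposition. The base case holds with $P_k^1=\sqrt{2}$ and $Q_k^1=0$. For the inductive step, after substituting $\sigma_k^{n-i}(s)=P_k^{n-i}(s)\cos(k\pi s)+Q_k^{n-i}(s)\sin(k\pi s)$ into \eqref{eq: def sigma_k^n}, the product-to-sum identities convert the integrands into four elementary pieces: $\sin(k\pi x)$, $\cos(k\pi x)$, $\sin(k\pi(x-2s))$, and $\cos(k\pi(x-2s))$, each multiplied by a polynomial in $s$. The first two integrate against polynomials to give polynomial-in-$x$ multiples of $\sin(k\pi x)$ and $\cos(k\pi x)$. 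For the latter two, the change of variable $u=x-2s$ maps the domain to the symmetric interval $[-x,x]$; parity in $u$ annihilates half of the monomial contributions, while repeated integration by parts on the remaining ones reduces them to polynomial (in $x$) multiples of $\cos(k\pi x)$ and $\sin(k\pi x)$, with neither surviving bare constants nor frequency-$2k\pi$ terms. Assembling these pieces preserves the claimed decomposition and closes the induction.
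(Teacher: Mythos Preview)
Your proposal is correct and follows the paper's own argument: iterate Lemma~\ref{lemma: prel for identical heat equations} with $c=0$ to obtain \eqref{eq: edo sigma^1}--\eqref{eq: edo sigma^k - 2}, then run an induction on $n$ for both \eqref{eq: asymptotic estimate sigma} and \eqref{eq: sigmak polnomial * trig}; the paper states exactly this outline and leaves the details you supplied to the reader. Two minor bookkeeping fixes: your induction hypothesis should also carry $\nu_k^{n-1}=\mathrm{O}(1)$ (the $i=n-1$ summand uses it, and it follows at once from its defining formula together with the bounds already assumed on $\sigma_k^m$, $m\le n-1$), and the claim that the leading term $\sigma_k^{n-1}(1)\sin(k\pi x)/(k\pi)$ is $\mathrm{O}(1/k^2)$ holds only for $n\ge 3$---at $n=2$ one has $\sigma_k^1(1)=\sqrt{2}(-1)^k$, so the term is merely $\mathrm{O}(1/k)$, which is still what you need.
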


\begin{proof}
First, \eqref{eq: edo sigma^1} follows from the definitions \eqref{eq: def sigma_0^1} and \eqref{eq: def sigma_k^1} of $\sigma_k^1$. The proof now consists, for $n \geq 2$, of applying iteratively Lemma~\ref{lemma: prel for identical heat equations} by setting $g_0(x) = \sum_{i=1}^{n-2} \nu_k^i \sigma_k^{n-i}$ and $g_1(x) = \sigma_k^{1}$ while taking $c = 0$. It has only to be checked that $\int_0^1 \cos(k\pi s) g_1(s) \,\mathrm{d}s \neq 0$. This is satisfied because for $k = 0$, due to \eqref{eq: def sigma_0^1}, $\int_0^1 \sigma_0^1(s) \,\mathrm{d}s = 1$ and, for $k \geq 1$, due to \eqref{eq: def sigma_k^1}, $\int_0^1 \cos(k \pi s) \sigma_k^1(s) \,\mathrm{d}s = \sqrt{2}/2$. Finally, \eqref{eq: asymptotic estimate sigma} and \eqref{eq: sigmak polnomial * trig} are obtained by an induction argument based on \eqref{eq: def sigma_0} and \eqref{eq: def sigma_k}. This completes the proof. 
\end{proof}

\begin{rem}\label{rem: computation firsts sigmak}
The functions $\sigma_k^n$ can be iteratively computed using \eqref{eq: def sigma_0} and \eqref{eq: def sigma_k}, adequately tuning the quantities $\nu_k^n$ so as to satisfy the first boundary condition of \eqref{eq: edo sigma^k - 2}. For a given number $N$ of heat equations in the PDE cascade \eqref{eq: studied PDE cascade}, we will see next that only the computation of $\sigma_k^n$ for $1 \leq n \leq N$ is required. The first four computations give 
\begin{align*}
\sigma_0^{1}(x) & = 1 , \\
\sigma_0^{2}(x) & = - \frac{x^2}{2} + x , \\
\sigma_0^{3}(x) & = \frac{x^4}{24} - \frac{x^3}{6} - \frac{x^2}{12} + \frac{x}{2} , \\
\sigma_0^{4}(x) & = - \frac{x^6}{720} + \frac{x^5}{120} + \frac{x^4}{72} - \frac{x^3}{9} - \frac{17x^2}{720} + \frac{7x}{24} ,
\end{align*}
with 
$\nu_0^1 = -1$,
$\nu_0^2 = -1/6$,
$\nu_0^3 = -17/360$,
$\nu_0^4 = -31/1890$,
and, for $k \geq 1$,
\begin{align*}
\sigma_k^{1}(x) & = \sqrt{2} \cos(k\pi x) , \\
\sigma_k^{2}(x) & = \frac{(-1)^k \sqrt{2}}{k\pi}(1-x) \sin(k\pi x) , \\
\sigma_k^{3}(x) & = \frac{\sqrt{2}}{k^3\pi^3}(x-1) \sin(k\pi x) + \frac{\sqrt{2}}{2k^2\pi^2}x(2-x) \cos(k\pi x) , \\
\sigma_k^{4}(x) & =  (-1)^k \sqrt{2}  \left( \frac{3-x-3x^2+x^3}{6k^3\pi^3} + \frac{2-2x}{k^5\pi^5} \right) \sin(k \pi x) + (-1)^k\sqrt{2} \frac{x^2-2x}{k^4 \pi^4} \cos(k\pi x)
\end{align*}
with 
$\nu_k^1 = 2(-1)^{k+1}$,
$\nu_k^2 = \frac{1}{k^2\pi^2}$,
$\nu_k^3 = (-1)^{k+1}\frac{k^2\pi^2 + 6}{3k^4\pi^4}$,
$\nu_k^4 = \frac{k^2 \pi^2 + 5}{k^6 \pi^6}$.
\end{rem}


\begin{lem}\label{lem: mult eig - phi}
Under Assumption~\ref{assump: identical heat equations}, the eigenvalues of $\mathcal{A}$ are given by
\begin{equation}\label{eq: lambda_k}
\lambda_{k} = a - k^2 \pi^2 ,\quad k \geq 0 .
\end{equation}
They are of geometric multiplicity $1$ and of algebraic multiplicity $N$ with associated generalized eigenvectors
\begin{equation}
\phi_{i,k} = ( \phi_{i,k}^j )_{1 \leq j \leq N} 
= ( \underbrace{\sigma_k^i , \sigma_k^{i-1} , \ldots , \sigma_k^1}_{i\,\mathrm{elements}} , \underbrace{0 , \ldots , 0}_{N-i \,\mathrm{times}} ) \in\mathcal{D}(A) \label{eq: mult eig - def phi}
\end{equation}
for integers $1 \leq i \leq N$ and $k \in\mathbb{N}$, that satisfy
\begin{equation}\label{eq: expression A for gen eig vec}
\mathcal{A} \phi_{i,k}
= \lambda_{k} \phi_{i,k} + \sum_{j=1}^{i-1} \nu_k^{i-j} \phi_{j,k} .
\end{equation}
Hence, in $(\phi_{i,k})_{1 \leq i \leq N}$, the matrix of (the restriction of) $\mathcal{A}$ is given by 
\begin{equation}\label{eq: matrix Mk}
M_k = \begin{bmatrix}
\lambda_k & \nu_k^1 & \nu_k^2 & \nu_k^3 & \ldots & \nu_k^{N-1} \\
0 & \lambda_k & \nu_k^1 & \nu_k^2 & \ldots & \nu_k^{N-2} \\
0 & 0 & \lambda_k & \nu_k^1 & \ddots & \vdots \\
0 & 0 & 0 & \lambda_k & \ddots & \nu_k^2 \\
\vdots & \vdots & \ddots & \ddots & \lambda_k & \nu_k^1 \\
0 & 0 & \ldots & 0 & 0 & \lambda_k \\ 
\end{bmatrix} 
\end{equation}
which has $\lambda_k$ as unique eigenvalue, of geometric multiplicity $1$ and of algebraic multiplicity $N$.
\end{lem}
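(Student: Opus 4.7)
The plan has four main steps. First, I verify that $\phi_{i,k}$ as defined by \eqref{eq: mult eig - def phi} lies in $D(\mathcal{A})$. For $j \leq i$ the $j$-th component is $\sigma_k^{i-j+1}$, so $(\phi_{i,k}^j)'(1) = (\sigma_k^{i-j+1})'(1) = 0$ by \eqref{eq: edo sigma^1}--\eqref{eq: edo sigma^k - 2}; the cascade coupling $(\phi_{i,k}^j)'(0) = \phi_{i,k}^{j+1}(1)$ for $j < i$ reduces to $(\sigma_k^{i-j+1})'(0) = \sigma_k^{i-j}(1)$, which is exactly \eqref{eq: edo sigma^k - 2}; finally, the condition $(\phi_{i,k}^i)'(0) = 0$ (coupling with the next zero component when $i<N$, or the Neumann condition when $i=N$) is $(\sigma_k^1)'(0)=0$, given by \eqref{eq: edo sigma^1}.

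Second, I compute $\mathcal{A}\phi_{i,k}$ componentwise. For $1 \leq j \leq i$, the $j$-th component is $(\sigma_k^{i-j+1})''+a\sigma_k^{i-j+1}$; substituting $(\sigma_k^n)''+k^2\pi^2\sigma_k^n = \sum_{l=1}^{n-1}\nu_k^l\sigma_k^{n-l}$ from \eqref{eq: edo sigma^k} (or zero when $n=1$), this rewrites as $\lambda_k \sigma_k^{i-j+1} + \sum_{l=1}^{i-j}\nu_k^l \sigma_k^{i-j+1-l}$. After the reindexing $l = i-j'$, the sum is exactly the $j$-th component of $\sum_{j'=1}^{i-1}\nu_k^{i-j'}\phi_{j',k}$. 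For $j>i$ both sides vanish. This yields \eqref{eq: expression A for gen eig vec}, and shows that $\mathrm{span}(\phi_{1,k},\ldots,\phi_{N,k})$ is $\mathcal{A}$-invariant with the matrix of the restriction, in the ordered basis $(\phi_{1,k},\ldots,\phi_{N,k})$, equal to $M_k$.

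Third, the matrix $M_k$ given by \eqref{eq: matrix Mk} is upper triangular with $\lambda_k$ on the diagonal, so $\lambda_k$ is its unique eigenvalue, of algebraic multiplicity $N$. Its superdiagonal entries are all equal to $\nu_k^1$, which is nonzero for every $k \in\mathbb{N}$ (from Remark~\ref{rem: computation firsts sigmak}: $\nu_0^1=-1$ and $\nu_k^1 = 2(-1)^{k+1}$ for $k\geq 1$); hence $M_k - \lambda_k I$ has rank $N-1$, so the geometric multiplicity is $1$.

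Fourth and most delicate, I must show that the $\lambda_k$'s exhaust the spectrum of $\mathcal{A}$ and that the generalized eigenspace associated to each $\lambda_k$ has dimension exactly $N$. The key observation is that the definition of $D(\mathcal{A})$ forces $(f^N)'(0) = (f^N)'(1) = 0$, so the action of $\mathcal{A}$ on the last component coincides with the self-adjoint Neumann operator $\mathcal{A}_N : g\mapsto g''+ag$ on $L^2(0,1)$, whose spectrum is exactly $\{\lambda_k\}_{k\in\mathbb{N}}$ with simple eigenvalues spanned by $\varphi_k$. If $\mathcal{A}f = \lambda f$, then projecting onto the $N$-th component forces $\lambda = \lambda_k$ for some $k$ and $f^N \in \mathrm{span}(\varphi_k)$; more generally, $(\mathcal{A}-\lambda_k)^m f = 0$ implies $(\mathcal{A}_N-\lambda_k)^m f^N = 0$, and self-adjointness of $\mathcal{A}_N$ precludes nontrivial Jordan blocks, so again $f^N \in \mathrm{span}(\varphi_k)$. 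One then propagates up the cascade: subtracting a suitable scalar multiple of $\phi_{N,k}$ kills the $N$-th component while preserving the generalized eigenvector relation, reducing to a generalized eigenvector of a cascade of $N-1$ equations, and an induction on $N$ identifies the generalized eigenspace with $\mathrm{span}(\phi_{1,k},\ldots,\phi_{N,k})$.

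The main obstacle is this fourth step, specifically the inductive propagation up the cascade: the nonhomogeneous Neumann problem at each level involves a compatibility condition with the previously determined component via $(f^j)'(0)=f^{j+1}(1)$, and one must carefully track how many free parameters survive to conclude that the algebraic multiplicity is exactly $N$ rather than strictly greater.
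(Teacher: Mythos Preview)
The paper gives no explicit proof of this lemma; it is stated as an immediate consequence of Lemma~\ref{eq: integration in series trough heat equations}, which establishes the boundary-value ODEs \eqref{eq: edo sigma^1}--\eqref{eq: edo sigma^k - 2} satisfied by the $\sigma_k^n$. Your first three steps are precisely the verifications the paper leaves implicit, and they are correct.

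Your fourth step (that the $\lambda_k$ exhaust the point spectrum and that each generalized eigenspace has dimension exactly $N$) goes beyond what the paper argues at this point. In the paper's logic these facts are secured \emph{a posteriori}: Lemma~\ref{eq: mult eig - Riesz basis} shows, via Bari's theorem and comparison with the product Hilbert basis $(\delta_{i,j}\varphi_k)$, that $\Phi$ is a Riesz basis of $\mathcal{H}^0$; completeness of $\Phi$ then rules out any further eigenvalues or any enlargement of the generalized eigenspaces. Your direct inductive approach---project onto the $N$-th component, use self-adjointness of the Neumann operator to force $f^N\in\mathrm{span}(\varphi_k)$, subtract a multiple of $\phi_{N,k}$, and descend to an $(N-1)$-cascade---is a clean alternative that does not rely on the Riesz basis property. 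The ``obstacle'' you flag is not a real obstruction: once $g^N=0$, the coupling condition $(g^{N-1})'(0)=g^N(1)=0$ automatically supplies the Neumann condition needed on the $(N-1)$-th component, the subspace $\{h\in D(\mathcal{A}):h^N=0\}$ is $\mathcal{A}$-invariant (since $(\mathcal{A}h)^N=(h^N)''+ah^N$), and the induction closes with exactly one new free parameter at each level, yielding algebraic multiplicity exactly $N$.
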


\begin{rem}
Lemma~\ref{eq: integration in series trough heat equations} is derived by iteratively applying Lemma~\ref{lemma: prel for identical heat equations} with $c = 0$ at each step. This choice is arbitrary: a different value of $c$ could be chosen at each step of the iteration. This would change the eigenelements $\phi_{i,k}$ defined in Lemma~\ref{lem: mult eig - phi}. This reflects the fact that there is one degree of freedom\footnote{The number of degrees of freedom coincides with the geometric multiplicity of the associated eigenvalues, which is $1$ in our case.} in the definition of each of the generalized eigenvectors $\phi_{i,k}$ for $2 \leq i \leq N$. This degree of freedom corresponds to the fact that a generalized eigenvector translated by an eigenvector associated with the same eigenvalue remains a generalized eigenvector. Since constants $c \neq 0$ would make the expressions of $\phi_{i,k}$ more complex, there is no technical benefit of choosing $c\neq 0$. Note however that this choice fixes the corresponding constants in the dual construction (see Remark~\ref{rem: dual constants fixed for biorthogonality}).
\end{rem}

\begin{rem}
Since $M_k$ has one single eigenvalue, of geometric multiplicity $1$, $M_k$ is similar to a Jordan block.
\end{rem}


\begin{lem}\label{eq: mult eig - Riesz basis}
$\Phi = \{ \phi_{i,k} \,\mid\, 1 \leq i \leq N ,\; k \in\mathbb{N} \}$ is a Riesz basis of $\mathcal{H}^0$. Moreover, $\mathcal{A}$ generates a $C_0$-semigroup.
\end{lem}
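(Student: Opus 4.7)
The plan is to apply Bari's theorem by comparing $\Phi$ to a simple orthonormal basis of $\mathcal{H}^0$, in the same spirit as the proof of Lemma~\ref{lem: simple eig Riesz basis H1}. I would define $\hat{\Phi} = \{ \hat{\phi}_{i,k} \,\mid\, 1 \leq i \leq N ,\; k \in\mathbb{N} \}$ by $\hat{\phi}_{i,k}^j = \delta_{ij}\,\varphi_k$. Since $(\varphi_k)_{k \geq 0}$ is a Hilbert basis of $L^2(0,1)$, the family $\hat{\Phi}$ is an orthonormal (hence Riesz) basis of $\mathcal{H}^0$.

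The first step would be the summability estimate. By \eqref{eq: mult eig - def phi}, $\phi_{i,k}^j = \sigma_k^{i-j+1}$ for $1 \leq j \leq i$ and vanishes otherwise. Since $\sigma_k^1 = \varphi_k$ for every $k \in \mathbb{N}$, the $i$-th component of $\phi_{i,k}$ coincides with $\hat{\phi}_{i,k}^i$, so that $\phi_{i,k} - \hat{\phi}_{i,k}$ is supported on components $1 \leq j \leq i-1$, each of the form $\sigma_k^{i-j+1}$ with index $i-j+1 \geq 2$. The uniform estimate \eqref{eq: asymptotic estimate sigma - 3} then yields $\Vert \phi_{i,k} - \hat{\phi}_{i,k} \Vert_{\mathcal{H}^0} = \mathrm{O}(1/k)$ as $k \to +\infty$, whence $\sum_{i,k} \Vert \phi_{i,k} - \hat{\phi}_{i,k} \Vert_{\mathcal{H}^0}^2 < \infty$.

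The main step would be to establish the $\omega$-linear independence of $\Phi$. Assume that $\sum_{i,k} c_{i,k} \phi_{i,k} = 0$ is a convergent series in $\mathcal{H}^0$. Projecting continuously onto the $N$-th component keeps only the terms with $i = N$ (since $\phi_{i,k}^N = 0$ for $i < N$), with $\phi_{N,k}^N = \sigma_k^1 = \varphi_k$; hence $\sum_k c_{N,k} \varphi_k = 0$ in $L^2(0,1)$ and the completeness of $(\varphi_k)_{k\geq 0}$ forces $c_{N,k} = 0$ for all $k$. Iterating downward on $j = N-1, N-2, \ldots, 1$, using that $\phi_{j,k}^j = \varphi_k$ and $\phi_{i,k}^j = 0$ for $i < j$ while the contributions from $i > j$ have already been shown to vanish, I would conclude $c_{i,k} = 0$ for all $(i,k)$. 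Bari's theorem (\cite{gohberg1978introduction}) then implies that $\Phi$ is a Riesz basis of $\mathcal{H}^0$.

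To obtain the $C_0$-semigroup, observe that in the basis $\Phi$, the operator $\mathcal{A}$ is block-diagonal with the finite-dimensional blocks $M_k$ from \eqref{eq: matrix Mk}. Each $M_k$ has single eigenvalue $\lambda_k = a - k^2 \pi^2 \to -\infty$, with superdiagonal entries $\nu_k^n$ that are uniformly bounded in $k$ thanks to \eqref{eq: asymptotic estimate sigma - 12}. Writing $M_k = \lambda_k I + N_k$ with $N_k$ nilpotent of index at most $N$ and $\Vert N_k \Vert$ uniformly bounded, one gets $\Vert e^{M_k t} \Vert \leq C (1+t^{N-1}) e^{\lambda_k t}$ uniformly in $k$, and the Riesz-spectral framework of \cite{curtain2012introduction} yields a $C_0$-semigroup generated by $\mathcal{A}$. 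I expect the $\omega$-independence argument to be the delicate point, as it requires justifying the componentwise projections on a merely convergent (not necessarily $\ell^2$) series.
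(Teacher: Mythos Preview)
Your proposal is correct and follows essentially the same route as the paper: Bari's theorem applied to $\Phi$ via the orthonormal reference family $\hat\phi_{i,k}^j=\delta_{ij}\varphi_k$, using $\sigma_k^1=\varphi_k$ and the estimate \eqref{eq: asymptotic estimate sigma - 3} for the quadratic summability, then the nilpotent decomposition $M_k=\lambda_k I + N_k$ with $\Vert N_k\Vert=\mathrm{O}(1)$ for the uniform semigroup bound. Your worry about the $\omega$-independence step is unfounded: the componentwise projection $\mathcal{H}^0\to L^2(0,1)$ and the functionals $\langle\cdot,\varphi_m\rangle$ are bounded linear, hence commute with the limit of any convergent series, and after projection the resulting series in $L^2$ has at most one nonzero term against each $\varphi_m$, so its partial sums are eventually constant. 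The paper in fact states $\omega$-independence without proof, so your argument adds value. One small point: the Riesz-spectral framework of \cite{curtain2012introduction} is stated for simple eigenvalues; since here the eigenvalues have algebraic multiplicity $N$ with Jordan blocks $M_k$, the paper instead invokes \cite{leiva2003lemma}, which handles precisely this block-diagonal situation under a uniform bound $\Vert e^{M_k t}\Vert\leq g(t)$---the very bound you derived.
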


\begin{proof}
Define $\hat{\Phi} = \{ \hat{\phi}_{i,k} \,\mid\, 1 \leq i \leq N ,\; k \in\mathbb{N} \}$ with $\hat{\phi}_{i,k} = ( \hat{\phi}_{i,k}^j )_{1 \leq j \leq N}$ where $\hat{\phi}_{i,k}^j(x) = \delta_{i,j} \varphi_k(x)$ and $\varphi_k$ is defined as in \eqref{eq: simple eig phi} for $j=i$. Here $\delta_{i,i} = 1$ while $\delta_{i,j} = 0$ whenever $i \neq j$. It is easy to see that $\hat{\Phi}$ is a Hilbert basis of $\mathcal{H}^0$. Hence, noting that $\sigma_k^1 = \varphi_k$ and invoking \eqref{eq: asymptotic estimate sigma - 3}, we infer that
$$
\sum_{i=1}^N \sum_{k \geq 0} \Vert \phi_{i,k} - \hat{\phi}_{i,k} \Vert_{\mathcal{H}^0}^2
= \sum_{i=1}^N \sum_{j=1}^{i-1} \sum_{k \geq 0} \Vert \phi_{i,k}^j \Vert_{L^2}^2 
= \sum_{i=1}^N \sum_{j=2}^{i} \sum_{k \geq 0} \Vert \sigma_{k}^j \Vert_{L^2}^2 < \infty .
$$
Since $\Phi$ is $\omega$-linearly independent, Bari's theorem implies that $\Phi$ is a Riesz basis of $\mathcal{H}^0$. Finally, a similar approach to \cite[Lem.~2.1]{leiva2003lemma} shows that $\mathcal{A}$ generates a $C_0$-semigroup because $\Vert e^{M_k t} \Vert \leq g(t)$ for all $k \in\mathbb{N}$ and for some real-valued continuous function $g$ (see the beginning of the proof of Theorem~\ref{thm4}).
\end{proof}

The previous lemma provides a suitable framework for studying the solutions in $L^2$ norm. The tool for studying the solutions in $H^1$ norm is provided by the following lemma.

\begin{lem}\label{lem: mult eig Riesz basis H1}
$\tilde{\Phi} = \{ \tilde{\phi}_{i,k} \,\mid\, 1 \leq i \leq N ,\; k \in\mathbb{N} \}$, where $\tilde{\phi}_{i,k} = \frac{1}{\sqrt{1+k^2\pi^2}} \phi_{i,k}$, is a Riesz basis of $\mathcal{H}^1$ (defined by \eqref{eq: def space H^1}).
\end{lem}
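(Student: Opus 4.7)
The plan is to mimic the proof of Lemma~\ref{lem: simple eig Riesz basis H1}, using the same isometry $\mathfrak{J}:\mathcal{H}^1\to\mathcal{H}^0$ built from $J=\sqrt{\mathrm{id}-\Delta}$. Concretely, I will define the comparison system $\hat{\Phi}=\{\hat{\phi}_{i,k}\}$ already used in the proof of Lemma~\ref{eq: mult eig - Riesz basis}, namely $\hat{\phi}_{i,k}^j=\delta_{i,j}\varphi_k$, which is a Hilbert basis of $\mathcal{H}^0$, and then transport it back via $\check{\Phi}=\mathfrak{J}^{-1}\hat{\Phi}$, which is automatically a Riesz basis of $\mathcal{H}^1$ with $\check{\phi}_{i,k}^j(x)=\frac{\delta_{i,j}}{\sqrt{1+k^2\pi^2}}\varphi_k(x)$ since $J^{-1}\varphi_k=\frac{1}{\sqrt{1+k^2\pi^2}}\varphi_k$. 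I will then apply Bari's theorem to compare $\tilde{\Phi}$ to $\check{\Phi}$.

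To do so, I need to verify the $\omega$-linear independence of $\tilde{\Phi}$ (which is immediate because each $\tilde{\phi}_{i,k}$ is a nonzero scalar multiple of an element of the Riesz basis $\Phi$ of $\mathcal{H}^0$ provided by Lemma~\ref{eq: mult eig - Riesz basis}, and a nontrivial finite linear combination cannot vanish), and the square-summability condition
\begin{equation*}
\sum_{i=1}^N\sum_{k\in\mathbb{N}} \Vert \tilde{\phi}_{i,k}-\check{\phi}_{i,k}\Vert_{\mathcal{H}^1}^2 <\infty.
\end{equation*}
Using the formula \eqref{eq: mult eig - def phi}, $\tilde{\phi}_{i,k}^j-\check{\phi}_{i,k}^j$ equals $\frac{1}{\sqrt{1+k^2\pi^2}}\sigma_k^{i-j+1}$ for $1\le j\le i-1$, and $0$ otherwise (the diagonal term vanishes since $\sigma_k^1=\varphi_k$). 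Hence it suffices to prove that, for every fixed $n\ge 2$,
\begin{equation*}
\sum_{k\in\mathbb{N}} \frac{1}{1+k^2\pi^2}\,\Vert \sigma_k^n\Vert_{H^1}^2 <\infty.
\end{equation*}

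The main work, and the key technical point, is to upgrade \eqref{eq: asymptotic estimate sigma - 3} from an $L^\infty$ estimate on $\sigma_k^n$ to an asymptotic estimate on $(\sigma_k^n)'$ of order $\mathrm{O}(1)$ as $k\to+\infty$. This I will do by direct differentiation of the induction formula \eqref{eq: def sigma_k^n}: for $k\ge 1$ and $n\ge 2$,
\begin{equation*}
(\sigma_k^n)'(x) = \sigma_k^{n-1}(1)\cos(k\pi x) + \sum_{i=1}^{n-1}\nu_k^i \int_0^x \cos(k\pi(x-s))\,\sigma_k^{n-i}(s)\,\mathrm{d}s,
\end{equation*}
and an analogous (in fact simpler) computation from \eqref{eq: def sigma_0^n} for $k=0$. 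Combined with the bounds $\sigma_k^{n-1}(1)=\mathrm{O}(1)$, $\nu_k^i=\mathrm{O}(1)$ and $\Vert\sigma_k^{n-i}\Vert_{L^\infty}=\mathrm{O}(1)$ from \eqref{eq: asymptotic estimate sigma - 12}, this yields $\Vert(\sigma_k^n)'\Vert_{L^\infty}=\mathrm{O}(1)$, hence $\Vert\sigma_k^n\Vert_{H^1}^2=\mathrm{O}(1)$ as $k\to+\infty$ for $n\ge 2$. Therefore
\begin{equation*}
\sum_{k\in\mathbb{N}} \frac{1}{1+k^2\pi^2}\,\Vert\sigma_k^n\Vert_{H^1}^2 = \sum_{k\in\mathbb{N}} \mathrm{O}(1/k^2) <\infty,
\end{equation*}
which establishes the required summability.

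Putting these ingredients together, Bari's theorem (see \cite{gohberg1978introduction}) applied to the $\omega$-linearly independent family $\tilde{\Phi}$ and the Riesz basis $\check{\Phi}$ of $\mathcal{H}^1$ yields that $\tilde{\Phi}$ is a Riesz basis of $\mathcal{H}^1$, completing the proof. The only real obstacle is the derivative estimate on $\sigma_k^n$; everything else is a direct reuse of the scheme from Lemma~\ref{lem: simple eig Riesz basis H1}.
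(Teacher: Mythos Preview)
Your proposal is correct and follows essentially the same approach as the paper: you define the same comparison system $\check{\Phi}$ with $\check{\phi}_{i,k}^j=\frac{\delta_{i,j}}{\sqrt{1+k^2\pi^2}}\varphi_k$, reduce the Bari condition to the summability of $\frac{1}{1+k^2\pi^2}\Vert\sigma_k^n\Vert_{H^1}^2$ for $n\ge 2$, and obtain the key estimate $\Vert(\sigma_k^n)'\Vert_{L^\infty}=\mathrm{O}(1)$ by differentiating the recursion \eqref{eq: def sigma_k^n} exactly as the paper does. The only cosmetic difference is that you spell out the isometry $\mathfrak{J}$ explicitly, whereas the paper simply states that $\check{\Phi}$ is a Riesz basis of $\mathcal{H}^1$ (relying implicitly on the argument already given in the proof of Lemma~\ref{lem: simple eig Riesz basis H1}).
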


\begin{proof}
Define $\check{\Phi} = \{ \check{\phi}_{i,k} \,\mid\, 1 \leq i \leq N ,\; k \in\mathbb{N} \}$ with $\check{\phi}_{i,k} = ( \check{\phi}_{i,k}^j )_{1 \leq j \leq N}$ where $\check{\phi}_{i,k}^j(x) = \frac{1}{\sqrt{1+k^2 \pi^2}} \delta_{i,j} \varphi_k(x)$. We know that $\check{\Phi}$ is a Riesz basis of $\mathcal{H}^1$. We apply Bari's theorem by comparing $\tilde{\Phi}$ to the Riesz basis $\check{\Phi}$. Since $\tilde{\Phi}$ is $\omega$-linearly independent, it remains to show that
\begin{equation}
\sum_{1 \leq i \leq N} \sum_{k \geq 0} \Vert \tilde{\phi}_{i,k} - \check{\phi}_{i,k} \Vert_{\mathcal{H}^1}^2 
= \sum_{i=1}^N \sum_{j=1}^{i-1} \sum_{k \geq 0} \frac{1}{1+k^2 \pi^2} \Vert \phi_{i,k}^j \Vert_{H^1}^2 
= \sum_{i=1}^N \sum_{j=2}^{i} \sum_{k \geq 0} \frac{1}{1+k^2 \pi^2} \Vert \sigma_{k}^j \Vert_{H^1}^2 < \infty . \label{eq: cond app Bari's theorem bis}
\end{equation} 
Based on \eqref{eq: asymptotic estimate sigma - 3}, we already know that, for any fixed $n \geq 2$, $\Vert \sigma_k^n \Vert_{L^2} = \mathrm{O}(1/k)$ as $k \rightarrow + \infty$. It remains to study the terms $\Vert (\sigma_k^n)' \Vert_{L^2}$ for $n \geq 2$ as $k \rightarrow + \infty$. We infer from \eqref{eq: def sigma_k^n} that
$$
	(\sigma_k^{n})'(x) = \sigma_k^{n-1}(1) \cos(k\pi x) 
	 + \sum_{i=1}^{n-1} \nu_k^i \int_0^x \cos(k\pi(x-s)) \sigma_{k}^{n-i}(s) \,\mathrm{d}s .
$$
Based on \eqref{eq: asymptotic estimate sigma - 12}, we thus infer that $\Vert (\sigma_k^n)' \Vert_{L^\infty} = \mathrm{O}(1)$ as $k \rightarrow + \infty$ for any given $n \geq 2$. Hence \eqref{eq: cond app Bari's theorem bis} holds true, which concludes the proof. 
\end{proof}

In order to compute the dual Riesz basis of $\Phi$, we introduce the adjoint operator 
$\mathcal{A}^* ( (g^j)_{1 \leq j \leq N} ) = ((g^j)''+a g^j)_{1 \leq j \leq N}$
defined on the domain \eqref{eq: domain A*}. 

\begin{lem}\label{lemma: prel for identical heat equations - dual}
Let $k \in\mathbb{N}$, $\gamma\in\mathbb{R}$, and $g_0,g_1 \in L^2(0,1)$ be such that $\int_0^1 \cos(k\pi s) g_1(s) \,\mathrm{d}s \neq 0$. Then, the functions $f:[0,1]\rightarrow\mathbb{R}$ and real numbers $\nu\in\mathbb{R}$ that satisfy
$f'' + k^2 \pi^2 f = g_0 + \nu g_1$, $f'(0) = 0$, $f'(1)=\gamma$,
are given by
\begin{equation*}
\nu = \frac{\gamma - \int_0^1 g_0(s) \,\mathrm{d}s}{\int_0^1 g_1(s) \,\mathrm{d}s}, 
\qquad
f(x) = c + \int_0^x (x-s) ( g_0(s) + \nu g_1(s) ) \,\mathrm{d}s,
\end{equation*}
if $k = 0$, and by
\begin{equation*}
\begin{split}
\nu &= \frac{ (-1)^k \gamma - \int_0^1 \cos(k\pi s) g_0(s) \,\mathrm{d}s}{\int_0^1 \cos(k\pi s) g_1(s) \,\mathrm{d}s} ,
\\
f(x) &= c \sqrt{2} \cos(k \pi x) 
+ \frac{1}{k\pi} \int_0^x \sin(k \pi (x-s) ) ( g_0(s) + \nu g_1(s) ) \,\mathrm{d}s,
\end{split}
\end{equation*}
if $k \geq 1$, where $c \in\mathbb{R}$ is an arbitrary constant. 
\end{lem}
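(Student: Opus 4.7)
My plan is to mirror the argument used for Lemma~\ref{lemma: prel for identical heat equations}, simply swapping the roles of the two endpoints $0$ and $1$, and to split according to whether $k=0$ or $k\geq 1$ since the kernel of the homogeneous Neumann operator $f\mapsto f''+k^2\pi^2 f$ behaves differently in those two regimes.

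In the case $k=0$, I would write down the general solution of $f''=g_0+\nu g_1$ as $f(x)=c+dx+\int_0^x(x-s)(g_0(s)+\nu g_1(s))\,\mathrm{d}s$, then impose $f'(0)=0$ which immediately forces $d=0$. Differentiating once more and evaluating at $x=1$ gives $f'(1)=\int_0^1(g_0+\nu g_1)\,\mathrm{d}s$, so the requirement $f'(1)=\gamma$ determines $\nu$ uniquely via the stated formula (well-defined thanks to the non-degeneracy hypothesis $\int_0^1\cos(0\cdot\pi s)g_1(s)\,\mathrm{d}s=\int_0^1 g_1(s)\,\mathrm{d}s\neq 0$). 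The constant $c$ remains free and parametrises the one-dimensional kernel of the homogeneous Neumann problem.

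For $k\geq 1$, I would use the Duhamel/variation-of-parameters formula to write the particular solution $f_p(x)=\frac{1}{k\pi}\int_0^x\sin(k\pi(x-s))(g_0+\nu g_1)(s)\,\mathrm{d}s$, which by construction satisfies $f_p(0)=f_p'(0)=0$. The general solution then reads $f(x)=A\cos(k\pi x)+B\sin(k\pi x)+f_p(x)$; the boundary condition $f'(0)=0$ kills $B$, and writing $A=c\sqrt{2}$ produces exactly the expression claimed. To pin down $\nu$, I would apply the Fredholm alternative: multiply the ODE by $\cos(k\pi x)$ and integrate by parts twice on $[0,1]$. The two $k^2\pi^2$ contributions cancel, and the only surviving boundary term on the left is $[\cos(k\pi x)f'(x)]_0^1=(-1)^k f'(1)-f'(0)=(-1)^k\gamma$, so the identity $(-1)^k\gamma=\int_0^1\cos(k\pi s)(g_0(s)+\nu g_1(s))\,\mathrm{d}s$ yields the stated formula for $\nu$.

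The argument is essentially a routine ODE verification, and the only point requiring care is the sign bookkeeping in the $k\geq 1$ case: the factor $(-1)^k$ in the numerator of $\nu$ arises from $\cos(k\pi)=(-1)^k$ at $x=1$, while the endpoint $x=0$ now contributes nothing (this is precisely where the dual lemma differs from Lemma~\ref{lemma: prel for identical heat equations}, in which the non-trivial endpoint contribution sat at $x=0$ instead). Everything else is a direct transcription, with the $\gamma x$ and $(\gamma/k\pi)\sin(k\pi x)$ terms of the original statement disappearing because of the new homogeneous condition $f'(0)=0$.
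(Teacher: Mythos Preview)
Your argument is correct in every detail: the variation-of-parameters representation, the elimination of the $\sin$ coefficient via $f'(0)=0$, and the Fredholm-type identity obtained by testing against $\cos(k\pi x)$ all check out, and the sign $(-1)^k$ from $\cos(k\pi)$ is handled properly. Note, however, that the paper gives no proof of this lemma (nor of the companion Lemma~\ref{lemma: prel for identical heat equations}); both are stated as elementary ODE facts and left to the reader, so there is nothing to ``mirror'' beyond the statement itself. Your write-up is thus more detailed than what the paper provides and would serve perfectly well as the omitted verification.
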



\begin{lem}\label{eq: integration in series trough heat equations dual}
We define a sequence of functions as follows. For $k = 0$, we first set 
\begin{subequations}\label{eq: def sigma_0 dual}
\begin{equation}\label{eq: def sigma_0^1 dual}
	\tau_0^{1}(x) = 1 .
\end{equation}
Then, we iteratively define for integers $n \geq 2$
\begin{equation}\label{eq: def sigma_0^n dual}
	\tau_0^{n}(x) = c_0^n + \underbrace{\sum_{i=1}^{n-1} \tilde{\nu}_0^i \int_0^x (x-s) \tau_{0}^{n-i}(s) \,\mathrm{d}s}_{= \tau_{0,0}^n(x)}  .
\end{equation}
where
\begin{equation}
\tilde{\nu}_0^{n-1} = - \tau_0^{n-1}(0) - \sum_{i=1}^{n-2} \tilde{\nu}_0^i \int_0^1 \tau_0^{n-i}(s) \,\mathrm{d}s ,
\end{equation}
and
\begin{equation}\label{eq: edo sigma^1 dual constant c0n}
c_0^n = - \sum_{i=1}^{n-1} \int_0^1 \sigma_0^{n-i+1}(s) \tau_0^i(s) \,\mathrm{d}s - \int_0^1 \sigma_0^{1}(s) \tau_{0,0}^n(s) \,\mathrm{d}s . 
\end{equation}
\end{subequations}
For integers $k \geq 1$, we first set 
\begin{subequations}\label{eq: def sigma_k dual}
\begin{equation}\label{eq: def sigma_k^1 dual}
	\tau_k^{1}(x) = \sqrt{2} \cos(k\pi x) .
\end{equation}
Then, we iteratively define for integers $n \geq 2$
\begin{equation}
	\tau_k^{n}(x) = c_k^n \sqrt{2} \cos(k\pi x)  
	+ \underbrace{\frac{1}{k\pi} \sum_{i=1}^{n-1} \tilde{\nu}_k^i \int_0^x \sin(k\pi(x-s)) \tau_{k}^{n-i}(s) \,\mathrm{d}s}_{= \tau_{k,0}^n(x)}  \label{eq: def sigma_k^n dual}
\end{equation}
where
\begin{equation}
\tilde{\nu}_k^{n-1} = -\sqrt{2} \bigg( (-1)^k \tau_k^{n-1}(0) 
+ \sum_{i=1}^{n-2} \tilde{\nu}_k^i \int_0^1 \cos(k\pi s) \tau_k^{n-i}(s) \,\mathrm{d}s \bigg)
\end{equation}
and
\begin{equation}\label{eq: edo sigma^1 dual constant cn}
c_k^n = - \sum_{i=1}^{n-1} \int_0^1 \sigma_k^{n-i+1}(s) \tau_k^i(s) \,\mathrm{d}s - \int_0^1 \sigma_k^{1}(s) \tau_{k,0}^n(s) \,\mathrm{d}s . 
\end{equation}
\end{subequations}
Then we have, for all $k \in\mathbb{N}$,
\begin{subequations}\label{eq: edo sigma dual}
\begin{equation}\label{eq: edo sigma^1 dual}
(\tau_k^1)''+k^2\pi^2 \tau_k^1 = 0 , \qquad (\tau_k^1)'(0)=(\tau_k^1)'(1)=0 ,
\end{equation}
and
\begin{equation}\label{eq: edo sigma^k dual}
(\tau_k^n)''+k^2\pi^2 \tau_k^n = \sum_{i=1}^{n-1} \tilde{\nu}_k^i \tau_k^{n-i} , \qquad
(\tau_k^n)'(0)= 0 ,\quad (\tau_k^n)'(1) = - \tau_k^{n-1}(0) ,
\end{equation}
\end{subequations}
for all $n \geq 2$. Furthermore we have the biorthogonality condition:
\begin{equation}\label{eq: edo biortho cond}
\sum_{i=1}^{n} \int_0^1 \sigma_k^{n-i+1}(s) \tau_k^i(s) \,\mathrm{d}s = 0  \qquad \forall n \geq 2 .
\end{equation}
Finally, $\tau_k^n$ can be expressed as
\begin{equation}\label{eq: tauk polnomial * trig}
\tau_k^n(x) = \tilde{P}_k^n(x) \cos(k \pi x) + \tilde{Q}_k^n(x) \sin(k \pi x).
\end{equation}
for some polynomials $\tilde{P}_k^n,\tilde{Q}_k^n \in\mathbb{R}[X]$.
\end{lem}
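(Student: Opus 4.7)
The plan is to mimic the structure of Lemma~\ref{eq: integration in series trough heat equations}, applying the dual inversion formula of Lemma~\ref{lemma: prel for identical heat equations - dual} at each inductive step. The base case $n=1$ in \eqref{eq: edo sigma^1 dual} is immediate from the definitions \eqref{eq: def sigma_0^1 dual} and \eqref{eq: def sigma_k^1 dual}, since $\tau_k^1 = \sigma_k^1$.

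For the inductive step $n \geq 2$, I would invoke Lemma~\ref{lemma: prel for identical heat equations - dual} with $g_0 = \sum_{i=1}^{n-2} \tilde{\nu}_k^i \tau_k^{n-i}$, $g_1 = \tau_k^1$, $\gamma = -\tau_k^{n-1}(0)$, and the free constant of that lemma taken to be $c_k^n$. The non-degeneracy condition $\int_0^1 \cos(k\pi s) g_1(s)\,\mathrm{d}s \neq 0$ holds, since this integral equals $1$ for $k=0$ and $\sqrt{2}/2$ for $k \geq 1$. The lemma then yields $\tau_k^n$ in precisely the form \eqref{eq: def sigma_0^n dual} or \eqref{eq: def sigma_k^n dual}, with $\tilde{\nu}_k^{n-1}$ given by the prescribed formula, as well as the ODE and boundary conditions \eqref{eq: edo sigma^k dual}. (Note the sign change in the $\gamma$ term here and in $\tilde{\nu}_k^{n-1}$ compared to the primal case, due to the interchange $f'(0)=0$, $f'(1)=\gamma$ in Lemma~\ref{lemma: prel for identical heat equations - dual}.)

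For the biorthogonality \eqref{eq: edo biortho cond}, I would isolate the $i=1$ term in the sum and substitute the decomposition $\tau_k^n = c_k^n \sqrt{2}\cos(k\pi x) + \tau_{k,0}^n(x)$ (for $k \geq 1$; the case $k=0$ is analogous, with constant term $c_0^n$ and the observation that $\sigma_0^1 \equiv 1$). The normalization identity $\int_0^1 \sigma_k^1(s)\sqrt{2}\cos(k\pi s)\,\mathrm{d}s = 1$, valid in both cases, gives
\begin{equation*}
\int_0^1 \sigma_k^1(s) \tau_k^n(s)\,\mathrm{d}s = c_k^n + \int_0^1 \sigma_k^1(s) \tau_{k,0}^n(s)\,\mathrm{d}s,
\end{equation*}
and substituting the explicit choice \eqref{eq: edo sigma^1 dual constant c0n}/\eqref{eq: edo sigma^1 dual constant cn} of $c_k^n$ produces exactly the cancellation that makes the full sum $\sum_{i=1}^n \int_0^1 \sigma_k^{n-i+1}\tau_k^i\,\mathrm{d}s$ vanish. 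The representation \eqref{eq: tauk polnomial * trig} then follows by induction: assuming it for $\tau_k^{n-i}$ with $i \geq 1$, and using \eqref{eq: sigmak polnomial * trig} for the $\sigma_k^j$ occurring through $c_k^n$, the integrals $\int_0^x \sin(k\pi(x-s))\tau_k^{n-i}(s)\,\mathrm{d}s$ are evaluated by expanding $\sin(k\pi(x-s))$ via the angle subtraction formula and integrating by parts repeatedly, each step preserving the polynomial-times-trigonometric structure.

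The main obstacle I anticipate is the biorthogonality step. The construction of $\tau_k^n$ is otherwise a straightforward dualization of that of $\sigma_k^n$, but the definition of $c_k^n$ in \eqref{eq: edo sigma^1 dual constant c0n} and \eqref{eq: edo sigma^1 dual constant cn} is precisely what couples the dual sequence to the primal one, and verifying that this choice enforces \eqref{eq: edo biortho cond} for every $n \geq 2$ requires careful bookkeeping of the splitting $\tau_k^n = c_k^n \sqrt{2}\cos(k\pi\cdot) + \tau_{k,0}^n$ against the summation index.
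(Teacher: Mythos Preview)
Your proposal is correct and follows essentially the same route as the paper's proof: iteratively apply Lemma~\ref{lemma: prel for identical heat equations - dual} with $g_0=\sum_{i=1}^{n-2}\tilde{\nu}_k^i\tau_k^{n-i}$, $g_1=\tau_k^1$, verify the non-degeneracy $\int_0^1\cos(k\pi s)g_1(s)\,\mathrm{d}s\neq 0$, and then observe that the choice of $c_k^n$ forces \eqref{eq: edo biortho cond}. One small slip: when you ``isolate the $i=1$ term'' in the biorthogonality sum you actually mean the $i=n$ term, since that is the one containing $\tau_k^n$ (and indeed your displayed computation uses $\int_0^1\sigma_k^1\tau_k^n$); also, $c_k^n$ is just a real number, so the $\sigma_k^j$ do not enter the polynomial-trigonometric representation of $\tau_k^n$ through it.
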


\begin{proof}
First, \eqref{eq: edo sigma^1 dual} follows from the definitions \eqref{eq: def sigma_0^1 dual} and \eqref{eq: def sigma_k^1 dual} of $\tau_k^1$. The proof now consists, for $n \geq 2$, of applying iteratively Lemma~\ref{lemma: prel for identical heat equations - dual} by setting $g_0(x) = \sum_{i=1}^{n-2} \tilde{\nu}_i \tau_k^{n-i}$ and $g_1(x) = \tau_k^{1}$. It only has to be checked that $\int_0^1 \cos(k\pi s) g_1(s) \,\mathrm{d}s \neq 0$. For $k = 0$, due to \eqref{eq: def sigma_0^1 dual}, $\int_0^1 \sigma_0^1(s) \,\mathrm{d}s = 1$ and, for $k \geq 1$, due to \eqref{eq: def sigma_k^1 dual}, $\int_0^1 \cos(k \pi s) \sigma_k^1(s) \,\mathrm{d}s = \sqrt{2}/2$. The biorthogonality condition \eqref{eq: edo biortho cond} holds true by fixing the arbitrary constant $c$ appearing in Lemma~\ref{lemma: prel for identical heat equations - dual} as \eqref{eq: edo sigma^1 dual constant c0n} and \eqref{eq: edo sigma^1 dual constant cn}.
\end{proof}

\begin{rem}\label{rem: dual constants fixed for biorthogonality}
The construction of the Riesz basis $\Phi$ relied on the iterative application of Lemma~\ref{lemma: prel for identical heat equations}, giving Lemma~\ref{eq: integration in series trough heat equations}. In that construction, we fixed the arbitrary constant $c$ from Lemma~\ref{lemma: prel for identical heat equations} as $c = 0$. Note that for the dual construction of Lemma~\ref{eq: integration in series trough heat equations dual}, the arbitrary constant $c$ appearing in  Lemma~\ref{lemma: prel for identical heat equations - dual} still corresponds to the solutions to the homogeneous ODE, i.e., for $g_0=g_1=0$ and $\gamma = 0$. However, in the iterative application of Lemma~\ref{lemma: prel for identical heat equations - dual}, giving Lemma~\ref{eq: integration in series trough heat equations dual}, $c$  is not defined as $0$ but iteratively defined by \eqref{eq: edo sigma^1 dual constant c0n} and \eqref{eq: edo sigma^1 dual constant cn}. Such a definition of the $c$ constant is made to ensure that the biorthogonality condition \eqref{eq: edo biortho cond} holds. This latter condition will be key in the construction of the dual Riesz basis $\Psi$ of $\Phi$ to ensure that $\Phi$ and $\Psi$ are biorthogonal.
\end{rem}

\begin{rem}\label{rem: computation firsts tauk}
The functions $\tau_k^n$ can be iteratively computed using \eqref{eq: def sigma_0 dual} and \eqref{eq: def sigma_k dual}. For a given number $N$ of heat equations in the PDE cascade \eqref{eq: studied PDE cascade}, only the computation of $\tau_k^n$ for $1 \leq n \leq N$ is required. The first four computations give
\begin{align*}
\tau_0^{1}(x) & = 1 , \\
\tau_0^{2}(x) & = - \frac{x^2}{2} - \frac{1}{6} , \\
\tau_0^{3}(x) & = \frac{x^4}{24} - \frac{1}{15} , \\
\tau_0^{4}(x) & = - \frac{x^6}{720} + \frac{x^4}{144} + \frac{17x^2}{720} - \frac{457}{15120} 
\end{align*}
along with $\nu_0^n = \tilde{\nu}_0^n$ for $1 \leq n \leq 4$, and, for $k \geq 1$,
\begin{align*}
\tau_k^{1}(x) & = \sqrt{2} \cos(k\pi x) , \\
\tau_k^{2}(x) & = \frac{(-1)^{k+1} \sqrt{2} x}{k \pi} \sin(k\pi x) + \frac{4 - (-1)^k\sqrt{2}}{2k^2\pi^2} \cos(k\pi x) , \\
\tau_k^{3}(x) & = \frac{(3\sqrt{2}+4(-1)^{k+1})x}{2k^3\pi^3} \sin(k\pi x) 
+ \sqrt{2} \left( \frac{1-3x^2}{6k^2\pi^2} + 3 \frac{1+ (-1)^{k+1}\sqrt{2}}{k^4\pi^4} \right) \cos(k\pi x) , \\
\tau_k^{4}(x) & = \left( (-1)^{k} \sqrt{2} \frac{x(-2+x^2)}{6k^3\pi^3} + \frac{16+11\sqrt{2}(-1)^{k+1}}{2k^5\pi^5} x \right) \sin(k\pi x) \\
& \phantom{=}\; + \left( \frac{8 + 6\sqrt{2}(-1)^{k+1} + (-12+15\sqrt{2}(-1)^{k}) x^2}{12k^4\pi^4} + \sqrt{2} \frac{135\sqrt{2} + 162(-1)^{k+1}}{12k^6\pi^6} \right) \cos(k\pi x) , 
\end{align*}
with $\nu_k^n = \tilde{\nu}_k^n$ for $1 \leq n \leq 4$.
\end{rem}


\begin{lem}
Under Assumption~\ref{assump: identical heat equations}, the dual Riesz basis $\Psi = \{ \psi_{i,k} ,\mid\, 1 \leq i \leq N ,\; k \in\mathbb{N} \}$  of $\Phi$ is described by 
\begin{equation}\label{eq: mult eig - def psi}
\psi_{i,k} = ( \psi_{i,k}^j )_{1 \leq j \leq N}
= ( \underbrace{0 , \ldots , 0}_{i-1 \,\mathrm{times}} , \underbrace{\tau_k^1 , \tau_k^2 , \ldots , \tau_k^{N-i+1}}_{N-i+1\,\mathrm{elements}} ) \in\mathcal{D}(A^*) 
\end{equation}
for integers $1 \leq i \leq N$, $k \in\mathbb{N}$. Furthermore we have $\nu_k^i=\tilde{\nu}_k^i$ and 
\begin{equation}\label{eq: expression A* for gen eig vec}
\mathcal{A}^* \psi_{i,k}
= \lambda_{k} \psi_{i,k} + \sum_{j=1}^{N-i} \nu_k^{j} \psi_{i+j,k} .
\end{equation}
Hence, in $(\psi_{i,k})_{1 \leq i \leq N}$, the matrix of $\mathcal{A}$ is $M_k^\top$ as defined by \eqref{eq: matrix Mk}.
\end{lem}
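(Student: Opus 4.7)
The plan is to verify four things in turn: (i) $\psi_{i,k} \in D(\mathcal{A}^*)$; (ii) a provisional formula for $\mathcal{A}^* \psi_{i,k}$ in which the coefficients are the $\tilde{\nu}_k^{\bullet}$'s; (iii) the biorthogonality relation $\langle \phi_{i,k}, \psi_{i',k'}\rangle = \delta_{(i,k),(i',k')}$; and (iv) the \emph{a posteriori} identification $\nu_k^i = \tilde{\nu}_k^i$. Since $\Phi$ is a Riesz basis of $\mathcal{H}^0$ by Lemma~\ref{eq: mult eig - Riesz basis}, step (iii) will immediately imply that $\Psi$ is the unique dual Riesz basis of $\Phi$, while (ii) combined with (iv) will yield \eqref{eq: expression A* for gen eig vec} and the statement about the matrix $M_k^\top$.

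For (i), I would check the boundary conditions in \eqref{eq: domain A*} component by component: zero components are trivial, the condition at $j=i$ reduces to $(\tau_k^1)'(0)=(\tau_k^1)'(1)=0$, which is \eqref{eq: edo sigma^1 dual}, and the condition at $j>i$ reduces to $(\tau_k^{j-i+1})'(0)=0$ together with $(\tau_k^{j-i+1})'(1)=-\tau_k^{j-i}(0)=-\psi_{i,k}^{j-1}(0)$, which is precisely \eqref{eq: edo sigma^k dual}. For (ii), one applies $\mathcal{A}^*$ componentwise using \eqref{eq: edo sigma dual}: the $j$-th component is $0$ for $j<i$, equals $\lambda_k\tau_k^1$ for $j=i$, and equals $\lambda_k\tau_k^{j-i+1}+\sum_{l=1}^{j-i}\tilde{\nu}_k^l\tau_k^{j-i+1-l}$ for $j>i$. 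Reassembling, this coincides with the $j$-th component of $\lambda_k\psi_{i,k}+\sum_{m=1}^{N-i}\tilde{\nu}_k^m\psi_{i+m,k}$, giving the provisional expansion.

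For (iii), the supports of $\phi_{i,k}$ and $\psi_{i',k'}$ overlap only on indices $i'\leq j\leq i$, so the inner product vanishes when $i<i'$. When $k=k'$ and $i\geq i'$, the change of variables $n=i-i'+1$, $m=j-i'+1$ collapses the inner product to $\sum_{m=1}^n\int_0^1\sigma_k^{n-m+1}\tau_k^m\,\mathrm{d}x$; for $n=1$ this equals $\int_0^1\mu_k^2\cos^2(k\pi x)\,\mathrm{d}x=1$, and for $n\geq 2$ it vanishes by the orthogonality identity \eqref{eq: edo biortho cond}, which was built into the construction of the $\tau_k^n$. The case $k\neq k'$ is the main obstacle, since \eqref{eq: edo biortho cond} gives no information and the explicit double integrals do not obviously cancel. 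The clean way to handle it is an invariant-subspace argument: by Lemma~\ref{lem: mult eig - phi} and step (ii), $V_k:=\mathrm{span}\{\phi_{i,k}\}_{i=1}^N$ is $\mathcal{A}$-invariant with $\mathcal{A}|_{V_k}$ having $\lambda_k$ as its unique eigenvalue, and $W_{k'}:=\mathrm{span}\{\psi_{i,k'}\}_{i=1}^N$ is $\mathcal{A}^*$-invariant with $\mathcal{A}^*|_{W_{k'}}$ having only $\lambda_{k'}$. Hence $(\mathcal{A}-\lambda_k I)^N v=0$ for every $v\in V_k$, and for any $w\in W_{k'}$,
\[
0 = \langle (\mathcal{A}-\lambda_k I)^N v , w\rangle = \langle v , (\mathcal{A}^*-\lambda_k I)^N w \rangle .
\]
Since the only eigenvalue of $(\mathcal{A}^*-\lambda_k I)|_{W_{k'}}$ is $\lambda_{k'}-\lambda_k\neq 0$, this operator restricted to $W_{k'}$ is invertible, its $N$-th power is a bijection of $W_{k'}$, and the identity above forces $V_k\perp W_{k'}$.

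Step (iv) is then a short calculation: applying the adjoint identity $\langle\mathcal{A}\phi_{i,k},\psi_{i',k}\rangle=\langle\phi_{i,k},\mathcal{A}^*\psi_{i',k}\rangle$ with $i>i'$, using \eqref{eq: expression A for gen eig vec} on the left, step (ii) on the right, and the biorthogonality from (iii), both sides collapse to a single term and give $\nu_k^{i-i'}=\tilde{\nu}_k^{i-i'}$ for every $1\leq i'<i\leq N$. The Riesz basis property of $\Psi$ is automatic since the biorthogonal system of a Riesz basis is itself a Riesz basis (the dual one), and the claim on the matrix $M_k^\top$ follows from (ii) combined with (iv).
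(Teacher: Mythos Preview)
Your proof is correct and follows the paper's strategy closely: verify the domain conditions, derive the provisional formula for $\mathcal{A}^*\psi_{i,k}$ with the $\tilde{\nu}$-coefficients, establish biorthogonality, and then identify $\nu_k^i=\tilde{\nu}_k^i$ by applying the adjoint identity within a single $k$-block.

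The one place where you deviate from the paper is the orthogonality $\langle\phi_{i,k},\psi_{i',k'}\rangle=0$ for $k\neq k'$. The paper writes the adjoint identity $\langle\mathcal{A}\phi_{i,k},\psi_{i',k'}\rangle=\langle\phi_{i,k},\mathcal{A}^*\psi_{i',k'}\rangle$ once and expands both sides via \eqref{eq: expression A for gen eig vec} and the provisional formula; this produces a triangular linear system in the unknowns $a_{i,i'}=\langle\phi_{i,k},\psi_{i',k'}\rangle$ which, thanks to $\lambda_k\neq\lambda_{k'}$, unwinds recursively (start at $i=1$, $i'=N$ and work outwards) to give $a_{i,i'}=0$. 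You instead take the $N$-th power: nilpotency of $(\mathcal{A}-\lambda_k I)|_{V_k}$ combined with invertibility of $(\mathcal{A}^*-\lambda_k I)|_{W_{k'}}$ collapses the whole system in one stroke. Both arguments rest on the same adjoint relation and the same spectral gap; yours packages the induction into a single bijectivity statement and is cleaner conceptually, while the paper's direct computation is more explicit but requires keeping track of the triangular structure.
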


\begin{proof}
By definition of $\mathcal{A}^*$ and the ODEs \eqref{eq: edo sigma dual} satisfied by $\tau_k^n$, we infer that 
\begin{equation}\label{eq: expression A* for gen eig vec - prel}
\mathcal{A}^* \psi_{i,k}
= \lambda_{k} \psi_{i,k} + \sum_{j=1}^{N-i} \tilde{\nu}_k^{j} \psi_{i+j,k} .
\end{equation}
We now show that $\Phi$ and $\Psi$ are biorthogonal, i.e., $\langle \phi_{i,k} , \psi_{i',k'} \rangle = \delta_{(i,k),(i',k')}$. Using the definitions \eqref{eq: mult eig - def phi} and \eqref{eq: mult eig - def psi} of $\phi_{i,k}$ and $\psi_{i,k}$, respectively, the fact that $\sigma_0^{1}(x) = \tau_0^1 (x) = 1$ while $\sigma_k^{1}(x) = \tau_{k}^1 (x) = \sqrt{2} \cos(k\pi x)$ for $k \in\mathbb{N}^*$, and the biorthogonality property \eqref{eq: edo biortho cond}, we infer that $\langle \phi_{i,k} , \psi_{i',k} \rangle = \delta_{i,i'}$. For $k \neq k'$, the computation of the two sides of the identity $\langle \mathcal{A} \phi_{i,k} , \psi_{i',k'} \rangle = \langle \phi_{i,k} , \mathcal{A}^* \psi_{i',k'} \rangle$, for $1 \leq i,i' \leq N$, by using \eqref{eq: expression A for gen eig vec} and \eqref{eq: expression A* for gen eig vec - prel} shows that $\langle \phi_{i,k} , \psi_{i',k'} \rangle = 0$. Hence $\Phi$ and $\Psi$ are biorthogonal. It remains to shows that $\nu_k^i=\tilde{\nu}_k^i$. This is inferred from the computation of the two sides of the identity $\langle \mathcal{A} \phi_{i,k} , \psi_{i',k} \rangle = \langle \phi_{i,k} , \mathcal{A}^* \psi_{i',k} \rangle$. 
\end{proof}

\subsection{Spectral reduction of the system}
Consider again the change of variable \eqref{eq: change of variable last component} that gives the homogenenous representation \eqref{eq: PDE homognenous coordinates}. Defining the states $\mathcal{X} = (y^j)_{1 \leq j \leq N}$ (original coordinates) and $\tilde{\mathcal{X}} = \left( (y^j)_{1 \leq j \leq N-1} , \tilde{y}^N \right)$ (homogeneous coordinates), we infer the change of variable formula \eqref{eq: change of variable abstract} along with the abstract system dynamics \eqref{eq: abstract PDE}. Define the coefficients of projection \eqref{eq: coeff of projection} and the vectors
\begin{equation*}
\bold{x}_k = \begin{bmatrix} x_{1,k} \\ \vdots \\ x_{N,k} \end{bmatrix} ,\quad
\tilde{\bold{x}}_k = \begin{bmatrix} \tilde{x}_{1,k} \\ \vdots \\ \tilde{x}_{N,k} \end{bmatrix} ,\quad
\alpha_k = \begin{bmatrix} \alpha_{1,k} \\ \vdots \\ \alpha_{N,k} \end{bmatrix} ,\quad
\beta_k = \begin{bmatrix} \beta_{1,k} \\ \vdots \\ \beta_{N,k} \end{bmatrix} .
\end{equation*}
We infer from \eqref{eq: change of variable abstract} that
$\tilde{\bold{x}}_k = \bold{x}_k + \beta_k u$
while \eqref{eq: abstract PDE} gives
\begin{equation}\label{eq: dynamics lambda_k homogeneous coordinates}
\dot{\tilde{\bold{x}}}_k = M_k \tilde{\bold{x}}_k + \alpha_k u + \beta_k v .
\end{equation}
Combining the two latter identities, we infer that 
\begin{equation}\label{eq: dynamics lambda_k}
\dot{\bold{x}}_k = M_k \bold{x}_k + N_k u
\end{equation}
where $N_k = \alpha_k + M_k \beta_k$. Hence, the dynamics of the mode $\lambda_k$ is described by \eqref{eq: dynamics lambda_k}. The study of its controllability properties relies on the following lemma. 

\begin{lem}\label{lem: control cond mult eig}
We have
$N_{k} = - \mu_k \begin{bmatrix} c_k^N & c_k^{N-1} & \ldots & c_k^1 & 1 \end{bmatrix}^\top$
for all $k \in\mathbb{N}$. Here $\mu_0 = 1$ and $\mu_k = \sqrt{2}$ for all $k \in\mathbb{N}^*$.
\end{lem}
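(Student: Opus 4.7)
The plan is to adapt directly the integration-by-parts computation from Lemma~\ref{lem: obsv condition dinstinct eig}, now exploiting the upper-triangular structure of $M_k$ so that the coupling between the different generalized modes $\psi_{i,k}$, $\psi_{i+1,k},\ldots,\psi_{N,k}$ enters in an explicit way.

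First, using \eqref{eq: matrix Mk} I would write out the $i$-th component of $N_k=\alpha_k+M_k\beta_k$ as
$$
N_{k,i} = \alpha_{i,k} + \lambda_k \beta_{i,k} + \sum_{j=1}^{N-i} \nu_k^j\, \beta_{i+j,k}.
$$
Next, I would integrate $\alpha_{i,k} = \int_0^1 (\varphi''+a\varphi)\,\psi_{i,k}^N\,\mathrm{d}x$ by parts twice exactly as in the proof of Lemma~\ref{lem: obsv condition dinstinct eig}. The boundary conditions of the dual generalized eigenvector, namely $(\psi_{i,k}^N)'(0)=(\tau_k^{N-i+1})'(0)=0$ from \eqref{eq: edo sigma dual}, together with $\varphi(1)=\varphi'(1)=0$ and $\varphi'(0)=1$, make all boundary terms vanish except $-\psi_{i,k}^N(0)$, yielding
$$
\alpha_{i,k} = -\psi_{i,k}^N(0) + \langle \beta, \mathcal{A}^* \psi_{i,k}\rangle.
$$

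Then the identity \eqref{eq: expression A* for gen eig vec} gives
$\langle \beta,\mathcal{A}^*\psi_{i,k}\rangle = \lambda_k\beta_{i,k} + \sum_{j=1}^{N-i}\nu_k^j \beta_{i+j,k}$, and substituting back produces
$$
N_{k,i} = -\psi_{i,k}^N(0) - \lambda_k\beta_{i,k} - \sum_{j=1}^{N-i}\nu_k^j\beta_{i+j,k} + \lambda_k\beta_{i,k} + \sum_{j=1}^{N-i}\nu_k^j\beta_{i+j,k} = -\psi_{i,k}^N(0) = -\tau_k^{N-i+1}(0).
$$
All the $\beta$-contributions cancel by design: this is the decisive point, and it is exactly what is engineered by the upper-triangular, Jordan-type form of $M_k$ together with \eqref{eq: expression A* for gen eig vec}.

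To finish, I would identify $\tau_k^n(0)$ explicitly. For $n=1$, \eqref{eq: def sigma_0^1 dual}--\eqref{eq: def sigma_k^1 dual} give $\tau_k^1(0)=\mu_k$. For $n\geq 2$, the integral parts $\tau_{k,0}^n$ in \eqref{eq: def sigma_0^n dual} and \eqref{eq: def sigma_k^n dual} vanish at $x=0$, so $\tau_k^n(0)=\mu_k c_k^n$. Re-indexing $n=N-i+1$ as $i$ varies from $1$ to $N$ then assembles the claimed vector $N_k = -\mu_k(c_k^N, c_k^{N-1},\ldots,c_k^2,1)^\top$, with the final entry coming from $\tau_k^1(0)=\mu_k$. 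There is no real obstacle in this proof beyond carefully tracking the index reversal $\psi_{i,k}^N=\tau_k^{N-i+1}$ and making sure the cancellation of the $\beta$-terms is written cleanly.
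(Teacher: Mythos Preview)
Your approach is correct and essentially identical to the paper's: integrate by parts, use \eqref{eq: expression A* for gen eig vec} to recognize $\langle\beta,\mathcal{A}^*\psi_{i,k}\rangle$ as the $i$-th row of $M_k\beta_k$, and then evaluate $\tau_k^{N-i+1}(0)$ from \eqref{eq: def sigma_0 dual}--\eqref{eq: def sigma_k dual}. One sign slip to fix: since $\beta=(0,\ldots,0,-\varphi)$, the integration by parts gives $\alpha_{i,k}=-\psi_{i,k}^N(0)\,\mathbf{-}\,\langle\beta,\mathcal{A}^*\psi_{i,k}\rangle$, not $+$; your subsequent cancellation display already uses the correct minus sign, so this is only a typo in the preceding line.
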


\begin{proof}
By the definition \eqref{eq: coeff of projection} of $\alpha_{i,k}$ and using integrations by parts, we infer that
$$
\alpha_{i,k} = \int_0^1 (\varphi''+a \varphi) \psi_{i,k}^N \,\mathrm{d}x 
= \left[ \varphi' \psi_{i,k}^N - \varphi ( \psi_{i,k}^N )' \right]_0^1 + \int_0^1 \varphi ( (\psi_{i,k}^N)'' + a \psi_{i,k}^N ) \,\mathrm{d}x 
= - \psi_{i,k}^N(0) - \langle \beta , \mathcal{A}^* \psi_{i,k} \rangle
$$
where we have used that $\varphi(1)=\varphi'(1)=0$, $\varphi'(0)=1$, and $(\psi_{i,k}^N)'(0)=0$. In view of \eqref{eq: expression A* for gen eig vec} we have $\langle \beta , \mathcal{A}^* \psi_{i,k} \rangle = (M_k)_{L_i} \beta_k$ where $(M_k)_{L_i}$ denotes the $i$-th line of the matrix $M_k$, which gives $\alpha_{i,k}+(M_k)_{L_i} \beta_k=- \psi_{i,k}^N(0)$. Hence, for all $1 \leq i \leq N$, we have
$$
N_{k} = - \begin{bmatrix} \psi_{1,k}^N(0) & \psi_{2,k}^N(0) & \ldots \psi_{N,k}^N(0) \end{bmatrix}^\top 
= - \begin{bmatrix} \tau_{k}^N(0) & \tau_{k}^{N-1}(0) & \ldots \tau_{k}^{1}(0) \end{bmatrix}^\top
$$
where we have used \eqref{eq: mult eig - def psi}. The  conclusion follows from \eqref{eq: def sigma_0 dual}-\eqref{eq: def sigma_k dual}.
\end{proof}

Noting that $\nu_k^1 \neq 0$ and $\mu_k \neq 0$, the application of the Hautus test shows that \eqref{eq: dynamics lambda_k} is controllable for all $k \in\mathbb{N}$.

\subsection{Output feedback control strategy} 

Consider the output \eqref{eq: bounded measurement} measured on the last component $y^1$ of the PDE cascade \eqref{eq: studied PDE cascade}.

Let $\delta > 0$ be the targeted exponential decay rate for the closed-loop system. Using \eqref{eq: lambda_k}, we choose a large enough integer $n_{0} \geq 0$ such that $\lambda_{n_{0}} < - \delta$, implying that $\lambda_{k} \leq \lambda_{n_{0}} < - \delta$ for all integers $k \geq n_{0}$. We now introduce a finite-dimensional dynamics that captures the $n_{0}$ first modes of the plant. Defining the state vector
$X_1 = \mathrm{col}(\bold{x}_0,\bold{x}_1,\ldots,\bold{x}_{n_{0}-1})$,
we infer from \eqref{eq: dynamics lambda_k homogeneous coordinates} that
\begin{equation}\label{eq: finite dim dynamics first modes - mult eig}
\dot{X}_{1} = A_{1} X_{1} + B_{1,u} u + B_{1,v} v
\end{equation}
where
$$
A_{1} = \mathrm{diag}(M_{0},M_{1},\ldots,M_{n_{0}-1}) , \quad
B_{1,u} = \mathrm{col}( \alpha_{0} , \alpha_{1} , \ldots , \alpha_{n_{0}-1} ) , \quad
B_{1,v} = \mathrm{col}( \beta_{0} , \beta_{1} , \ldots , \beta_{n_{0}-1} ) .
$$
Recalling that $v = \dot{u}$, we define the augmented state vector
$X_{1,a} = \mathrm{col}(X_1,u)$,
obtaining the dynamics
\begin{equation}\label{eq: augmented finite dim dynamics first modes - mult eig}
\dot{X}_{1,a} = A_{1,a} X_{1,a} + B_{1,a} v
\end{equation}
where the matrices $A_{1,a}$  and $B_{1,a}$ are defined by \eqref{eq: augmented matrices}.

Under Assumption~\ref{assump: identical heat equations}, $k \neq l \Rightarrow \mathrm{sp}(M_k) \cap \mathrm{sp}(M_l) = \emptyset$. Therefore, the application of the Hautus test shows that the pair $(A_{1,a},B_{1,a})$ is controllable if and only if the last entry of $N_{k} = \alpha_{k} + M_{k} \beta_{k} \neq 0$ is not zero for all $0 \leq k \leq n_{0}-1$. The result of Lemma~\ref{lem: control cond mult eig} implies the following statement.

\begin{lem}
Under Assumption~\ref{assump: identical heat equations}, the pair $(A_{1,a},B_{1,a})$ is controllable.
\end{lem}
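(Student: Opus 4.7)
The natural approach is to verify the Hautus rank test for $(A_{1,a}, B_{1,a})$. By the block-triangular structure \eqref{eq: augmented matrices}, the spectrum of $A_{1,a}$ equals $\{\lambda_k : 0 \leq k \leq n_{0}-1\} \cup \{0\}$, so I only need to examine nonzero left eigenvectors $w = (w_1^\top, w_2)^\top$ satisfying $w_1^\top A_1 = \lambda w_1^\top$ and $w_1^\top B_{1,u} = \lambda w_2$ for $\lambda$ in that set, and to check that $w^\top B_{1,a} = w_1^\top B_{1,v} + w_2 \neq 0$.

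The first key step is to compute the left eigenspace of $A_1 = \mathrm{diag}(M_0, \ldots, M_{n_{0}-1})$. Because the $\lambda_k$ are pairwise distinct (from \eqref{eq: lambda_k} under Assumption~\ref{assump: identical heat equations}) and each $M_k - \lambda_k I$ is strictly upper triangular with superdiagonal entries equal to $\nu_k^1 \neq 0$ (as follows directly from Lemma~\ref{eq: integration in series trough heat equations}, giving $\nu_0^1 = -1$ and $\nu_k^1 = 2(-1)^{k+1}$ for $k \geq 1$), solving $v^\top (M_k - \lambda_k I) = 0$ column by column forces $v_1 = v_2 = \cdots = v_{N-1} = 0$. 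Hence the left eigenspace of $A_1$ at $\lambda_k$ is one-dimensional and supported on the $N$-th coordinate of the $k$-th block.

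With this in hand, I would distinguish two cases. First, suppose $\lambda = \lambda_k \neq 0$. Then $w_1$ is a nonzero multiple $c$ of the canonical basis vector at position $Nk+N$, and the second eigenvector relation yields $w_2 = c\alpha_{N,k}/\lambda_k$; necessarily $c \neq 0$, else $w = 0$. Then
\begin{equation*}
w^\top B_{1,a} = c \beta_{N,k} + \frac{c \alpha_{N,k}}{\lambda_k} = \frac{c(\alpha_{N,k} + \lambda_k \beta_{N,k})}{\lambda_k} = \frac{c (N_k)_N}{\lambda_k} = -\frac{c \mu_k}{\lambda_k},
\end{equation*}
using that the last row of $M_k$ equals $\lambda_k$ times the last canonical vector and invoking Lemma~\ref{lem: control cond mult eig}; this quantity is nonzero. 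Second, suppose $\lambda = 0$. If $0 \notin \mathrm{sp}(A_1)$, then $w_1 = 0$, $w_2 \neq 0$, and $w^\top B_{1,a} = w_2 \neq 0$. If instead $\lambda_{k_0} = 0$ for some $k_0$, the left eigenvector equation constrains $w_1 = c e$ at position $Nk_0+N$, and $w_1^\top B_{1,u} = 0$ becomes $c \alpha_{N,k_0} = 0$; Lemma~\ref{lem: control cond mult eig} specialized to $\lambda_{k_0} = 0$ gives $\alpha_{N,k_0} = (N_{k_0})_N = -\mu_{k_0} \neq 0$, forcing $c = 0$ and thus $w_1 = 0$ with $w_2 \neq 0$, so again $w^\top B_{1,a} \neq 0$.

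The main obstacle I foresee is exactly this possible coincidence between the augmented zero eigenvalue arising from $\dot u = v$ and an eigenvalue $\lambda_{k_0} = 0$ of the plant: \emph{a priori}, such a coincidence could enlarge the left eigenspace of $A_{1,a}$ at $\lambda = 0$ and destroy the rank condition. The resolution is that Lemma~\ref{lem: control cond mult eig} precludes this merger at the level of left eigenvectors by ensuring $\alpha_{N,k_0} \neq 0$ precisely in that degenerate configuration, which closes the argument.
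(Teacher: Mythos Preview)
Your proof is correct and follows essentially the same route as the paper: apply the Hautus test, reduce to the last entry of $N_k = \alpha_k + M_k \beta_k$, and invoke Lemma~\ref{lem: control cond mult eig}. Your treatment is in fact more thorough than the paper's, which simply asserts the Hautus reduction in the text preceding the lemma; in particular, your explicit handling of the potential coincidence $\lambda_{k_0}=0$ with the augmented eigenvalue is a detail the paper leaves implicit.
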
 

\begin{rem}
At this step, it is possible to design a state-feedback control strategy as in Remark~\ref{rem: state-feedback}.
\end{rem}

Proceeding as in the previous section, the measurement \eqref{eq: bounded measurement} is expanded as
\begin{equation}\label{eq: output series expansion - mult eig}
z(t) = \sum_{i=1}^N \sum_{k \geq 0} c_{i,k} \tilde{x}_{i,k}(t) = C_1 X_1(t) + \sum_{k \geq n_{0}} c_{k} \tilde{\bold{x}}_{k}(t) 
\end{equation} 
where $c_{i,k}$ is defined by \eqref{eq: coeff projection measurement bounded operator} 
and
$$
c_{k} = \begin{bmatrix} c_{1,k} & c_{2,k} & \ldots & c_{N,k} \end{bmatrix} , \quad
C_1 = \begin{bmatrix} c_{0} & c_{1} & \ldots & c_{n_{0}-1} \end{bmatrix} .
$$
Again, under Assumption~\ref{assump: identical heat equations}, $k \neq l \Rightarrow \mathrm{sp}(M_k) \cap \mathrm{sp}(M_l) = \emptyset$. Hence, the application of the Hautus test shows that the pair $(A_1,C_1)$ is observable if and only if $c_{1,k} \neq 0$ for all $0 \leq k \leq n_{0}-1$. By the definition \eqref{eq: coeff projection measurement bounded operator} of $c_{i,k}$ and the expression of $\phi_{1,k}$ given by \eqref{eq: mult eig - def phi}, we infer the following result.

\begin{lem}\label{eq: mult eig obsv cond}
Under Assumption~\ref{asum: simple eigenvalues}, the pair $(A_1,C_1)$ is observable if and only if
\begin{equation}\label{eq: obsv cond mult eig}
\int_0^1 c(x) \cos(k \pi x) \,\mathrm{d}x \neq 0 , \quad 0 \leq k \leq n_{0}-1 .
\end{equation}
\end{lem}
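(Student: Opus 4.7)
The plan is very short because the preceding paragraph has already done the bulk of the work. Under Assumption~\ref{assump: identical heat equations}, the matrix $A_1=\mathrm{diag}(M_0,\ldots,M_{n_0-1})$ is block-diagonal and the blocks have pairwise disjoint spectra, so the Hautus observability test factors through the individual blocks. For each block $M_k$, the associated eigenvalue $\lambda_k$ has geometric multiplicity one, the eigenvector being $\phi_{1,k}$ (see Lemma~\ref{lem: mult eig - phi}); hence the Hautus rank condition reduces to the non-vanishing of the inner product of $\phi_{1,k}$ with the row of $C_1$ supported on the corresponding block, which is exactly $c_{1,k}\neq 0$. This is precisely what the sentence before the lemma states, and I would simply invoke it as the starting point.

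It then remains to translate the condition $c_{1,k}\neq 0$ into the integral condition of the statement. By definition \eqref{eq: coeff projection measurement bounded operator}, $c_{1,k}=\int_0^1 c(x)\phi_{1,k}^1(x)\,\mathrm{d}x$, and the explicit formula \eqref{eq: mult eig - def phi} gives $\phi_{1,k}^1=\sigma_k^1$. I would then substitute the closed-form expressions $\sigma_0^1(x)=1$ from \eqref{eq: def sigma_0^1} and $\sigma_k^1(x)=\sqrt{2}\cos(k\pi x)$ from \eqref{eq: def sigma_k^1}: in both cases $c_{1,k}$ is a nonzero multiple of $\int_0^1 c(x)\cos(k\pi x)\,\mathrm{d}x$, so the non-vanishing of $c_{1,k}$ is equivalent to the non-vanishing of the stated Fourier coefficient. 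Ranging $k$ over $0\leq k\leq n_0-1$ yields \eqref{eq: obsv cond mult eig}.

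There is no real obstacle here: the reason the statement is clean, despite the eigenvalue $\lambda_k$ having algebraic multiplicity $N$ (and hence despite the block $M_k$ being a full Jordan block of size $N$), is that the Hautus criterion for a single Jordan block only sees the eigenvector direction, which for $M_k$ is precisely the first basis vector $\phi_{1,k}$; and this eigenvector involves only the simple function $\sigma_k^1$ on its first component, all other components vanishing. In other words, the intermediate components of the cascade (encoded in $\phi_{i,k}$ for $i\geq 2$) play no role in observability through the measurement on $y^1$, exactly as one expects.
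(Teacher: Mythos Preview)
Your proposal is correct and essentially coincides with the paper's own argument: the paper does not give a separate proof of this lemma, but simply notes in the sentence preceding it that the Hautus test reduces observability of $(A_1,C_1)$ to $c_{1,k}\neq 0$ for $0\leq k\leq n_0-1$, and then reads off $\phi_{1,k}^1=\sigma_k^1=\mu_k\cos(k\pi\cdot)$ from \eqref{eq: mult eig - def phi} and \eqref{eq: def sigma_0^1}--\eqref{eq: def sigma_k^1}. Your explanation of why only the first component $c_{1,k}$ matters (kernel of $M_k-\lambda_k I$ spanned by $e_1$ since $\nu_k^1\neq 0$) makes explicit the step the paper leaves implicit; note also that the paper's hypothesis ``Assumption~\ref{asum: simple eigenvalues}'' is a typo for Assumption~\ref{assump: identical heat equations}, as you correctly use.
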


We are now in position to define our output feedback control strategy. Let $n_{1} \geq n_{0}$ be an integer to be chosen large enough later on. We defined the following controller dynamics:
\begin{subequations}\label{eq: mult eig output feedback control strategy}
\begin{align}
\dot{u} & = v \\
\dot{\hat{X}}_1 & = A_1 \hat{X}_1 + B_{1,u} u + B_{1,v} v 
- L \left( C_1 \hat{X}_1 + \sum_{k = n_{0}}^{n_{1}} c_{k} \hat{\bold{x}}_{k} - z \right) \\
\dot{\hat{\bold{x}}}_{k} & = M_{k} \hat{\bold{x}}_{k} + \alpha_{k} u + \beta_{k} v  ,\; n_{0} \leq k \leq n_{1} \\
v & = -K_x \hat{X}_1 - k_u u 
\end{align}
\end{subequations}
where $k_u\in\mathbb{R}$ and $K_x \in\mathbb{R}^{1\times n_{0} N}$ are the feedback gains and $L \in \mathbb{R}^{n_{0} N}$ is the observer gain.


\begin{thm}\label{thm4}
Let $a_1=a_2=\ldots = a_N \in\mathbb{R}$ (Assumption~\ref{assump: identical heat equations}) and let $c \in L^2(0,1)$. Let $\delta > 0$ be arbitrary. Let $n_{0}\in\mathbb{N}$ be large enough so that $\lambda_{n_{0}} < - \delta$. Assuming that the observability condition \eqref{eq: obsv cond mult eig} holds, let $K = \begin{bmatrix} K_x & k_u \end{bmatrix}\in\mathbb{R}^{1\times(n_{0}N+1)}$ and $L \in \mathbb{R}^{n_{0}N}$ be matrices such that $A_{1,a}-B_{1,a}K$ and $A_1 - L C_1$ are Hurwitz with spectral abscissa less than $-\delta$. 

Then, for any $n_{1} \geq n_{0}$ large enough, there exists $C > 0$ such that, for all initial conditions $y_0^j \in L^2(0,1)$, $1 \leq j \leq N$, $u(0)=u_0\in\mathbb{R}$, $\hat{X}_1(0)\in\mathbb{R}^{n_{0}N}$, and $\hat{x}_{i,k}(0)\in\mathbb{R}$, the solutions of the closed-loop system consisting of the PDE cascade \eqref{eq: studied PDE cascade}, the output \eqref{eq: bounded measurement} and the controller \eqref{eq: mult eig output feedback control strategy}, satisfy \eqref{eq: stability estimate} with $Y = \mathcal{H}^0$ defined by \eqref{eq: Hilbert space H^0}, for all $t \geq 0$. 

If the initial condition is such that $y_0^j \in H^2(0,1)$ with $(y^j)'(0) = y^{j+1}(1)$ and $(y^j)'(1) = 0$  for all $1 \leq j \leq N-1$ while $(y^N)'(0) = u(0)$ and $(y^N)'(1) = 0$, the conclusion holds with $Y = \mathcal{H}^1$ defined by \eqref{eq: def space H^1}.
\end{thm}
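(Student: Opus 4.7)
The approach mirrors the proof of Theorem~\ref{thm1} step by step, with the only genuine novelty being the handling of the Jordan block structure of $M_k$. I would first introduce the observation errors $e_k = \tilde{\bold{x}}_k - \hat{\bold{x}}_k \in \mathbb{R}^N$ for $0 \leq k \leq n_{1}$, stack them into $E_1 = \mathrm{col}(e_0,\ldots,e_{n_{0}-1})$ and $E_2 = \mathrm{col}(e_{n_{0}},\ldots,e_{n_{1}})$, and form $X = \mathrm{col}(\hat{X}_{1,a}, E_1, \hat{X}_2, E_2)$ with $\hat{X}_2 = \mathrm{col}(\hat{\bold{x}}_{n_{0}},\ldots,\hat{\bold{x}}_{n_{1}})$. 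Combining \eqref{eq: dynamics lambda_k homogeneous coordinates}, \eqref{eq: finite dim dynamics first modes - mult eig}, \eqref{eq: augmented finite dim dynamics first modes - mult eig}, \eqref{eq: output series expansion - mult eig}, and \eqref{eq: mult eig output feedback control strategy} yields $\dot{X} = F X + \mathcal{L} \zeta$ with the single scalar residue $\zeta = \sum_{k \geq n_{1}+1} c_k \tilde{\bold{x}}_k$. The matrix $F$ is block upper-triangular with diagonal blocks $A_{1,a}-B_{1,a}K$, $A_1-LC_1$, and $A_2 = \mathrm{diag}(M_{n_{0}},\ldots,M_{n_{1}})$; the last is Hurwitz because each $M_k$ has the unique eigenvalue $\lambda_k < -\delta$ for $k \geq n_{0}$, so $F$ itself has spectral abscissa strictly less than $-\delta$.

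For the $\mathcal{H}^0$ estimate I take $V = X^\top P X + \sum_{k \geq n_{1}+1} \|\tilde{\bold{x}}_k\|^2$ (and $V = X^\top P X + \sum_{k \geq n_{1}+1}(1+k^2)\|\tilde{\bold{x}}_k\|^2$ for $\mathcal{H}^1$), the equivalences with the squared norms following respectively from Lemmas~\ref{eq: mult eig - Riesz basis} and~\ref{lem: mult eig Riesz basis H1}. Along the tail dynamics $\dot{\tilde{\bold{x}}}_k = M_k \tilde{\bold{x}}_k + \alpha_k u + \beta_k v$, the crucial estimate is
\begin{equation*}
2\,\tilde{\bold{x}}_k^\top M_k \tilde{\bold{x}}_k \;\leq\; (2\lambda_k + \rho)\,\|\tilde{\bold{x}}_k\|^2, \qquad \rho \;\triangleq\; \sup_{k \in \mathbb{N}} \|M_k + M_k^\top - 2\lambda_k I\| < \infty.
\end{equation*}
The uniform bound $\rho$ holds because the off-diagonal entries of $M_k-\lambda_k I$ are the scalars $\nu_k^j$, which are all $\mathrm{O}(1)$ by Remark~\ref{rem: computation firsts sigmak} (indeed $\nu_k^j = \mathrm{O}(1/k^{2(j-1)})$ for $j \geq 2$). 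The cross terms $2\tilde{\bold{x}}_k^\top(\alpha_k u + \beta_k v)$ are absorbed via Young's inequality as in the proof of Theorem~\ref{thm1}, exploiting that $\sum_{k \geq n_{1}+1} \|\alpha_k\|^2$ and $\sum_{k \geq n_{1}+1} \|\beta_k\|^2$ are finite and vanish as $n_{1}\to+\infty$. Cauchy--Schwarz gives $\zeta^2 \leq S_\zeta \sum_{k \geq n_{1}+1}\|\tilde{\bold{x}}_k\|^2$ with $S_\zeta = \sum_{k \geq n_{1}+1}\|c_k\|^2 \to 0$, summability being a consequence of $c \in L^2(0,1)$ together with $\phi_{i,k}^1 = \sigma_k^i$ and $\Vert \sigma_k^i \Vert_{L^\infty} = \mathrm{O}(1/k)$ for $i \geq 2$ from \eqref{eq: asymptotic estimate sigma - 3}.

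Assembling these pieces, one obtains $\dot{V} + 2\delta V \leq \tilde{X}^\top \Theta_1 \tilde{X} + \sum_{k \geq n_{1}+1} \Gamma_k \|\tilde{\bold{x}}_k\|^2$ with $\tilde{X}=\mathrm{col}(X,\zeta)$, $\Theta_1$ of the same block-structure as \eqref{eq: Theta_1} (with a single scalar $\zeta$-block in place of the $N$ blocks), and $\Gamma_k = 2\lambda_k + \rho + 2\delta + 2(1+k^2)/\epsilon + \eta S_\zeta$. Since $\|C_2\|,\|B_{2,u}\|,\|B_{2,v}\| = \mathrm{O}(1)$ as $n_{1} \to +\infty$, the Appendix lemma of \cite{lhachemi2020finite} gives $\|P\| = \mathrm{O}(1)$, and a Schur complement argument combined with the choice $\epsilon > 1/\pi^2$ and $\eta = 1/\sqrt{S_\zeta}$ (or $\eta = n_{1}$ if $S_\zeta = 0$) yields $\Theta_1 \preceq 0$ and $\Gamma_{k} \leq 0$ for all $k \geq n_{1}+1$ provided $n_{1}$ is chosen large enough. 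The $\mathcal{H}^1$ case is identical, with the additional $(1+k^2)$ weight in the tail sum only strengthening the negativity of $\Gamma_k$ for large $k$. The main obstacle is precisely establishing the uniform bound $\rho$: the non-normality of the $M_k$'s inflates the tail dissipation by a $k$-independent constant $\rho/2$ that must then be dominated by $\lambda_{n_{1}+1}$, which is possible because $\lambda_k \to -\infty$ while $\rho$ stays bounded.
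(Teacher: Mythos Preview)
Your approach is correct and actually takes a somewhat more elementary route for the tail estimate than the paper.  The paper introduces, for each tail mode $k\geq n_0$, a positive definite matrix $P_k$ solving $M_k^\top P_k + P_k M_k + 2\delta P_k = -|\lambda_k| I$, proves $\|P_k-\tfrac12 I\|=O(1/k^2)$ to get uniform bounds $\gamma_m I\preceq P_k\preceq \gamma_M I$, and uses the weighted tail sum $\sum_{k\geq n_1+1}\tilde{\bold{x}}_k^\top P_k\tilde{\bold{x}}_k$ in $V$.  You bypass all of this by keeping the identity weight and bounding the symmetric part of $M_k$ directly via $\rho=\sup_k\|M_k+M_k^\top-2\lambda_k I\|<\infty$.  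Both routes rest on the same structural fact---the off-diagonal part $R_k=M_k-\lambda_k I$ is uniformly bounded by \eqref{eq: asymptotic estimate sigma - 12}---but yours avoids the auxiliary Lyapunov equations at the price of the harmless additive constant $\rho$ in $\Gamma_k$, which is then swallowed by $\lambda_{n_1+1}\to-\infty$.

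One step you should not skip: invoking the Appendix lemma of \cite{lhachemi2020finite} to conclude $\|P\|=O(1)$ requires more than $\|C_2\|,\|B_{2,u}\|,\|B_{2,v}\|=O(1)$.  In Theorem~\ref{thm1} the block $A_2$ was diagonal and the lemma applied immediately; here $A_2=\mathrm{diag}(M_{n_0},\ldots,M_{n_1})$ is only block diagonal with non-normal blocks of growing number, and the lemma needs a uniform bound on $\|e^{(A_2+\delta I)t}\|$ independent of $n_1$.  The paper checks this explicitly: since $R_k$ is nilpotent of index at most $N$, $e^{(M_k+\delta I)t}=e^{(\lambda_k+\delta)t}\sum_{j=0}^{N-1}R_k^jt^j/j!$, and with $\|R_k\|\leq\rho/2$ (your own bound) and $\lambda_k\leq\lambda_{n_0}<-\delta$ one gets $\|e^{(A_2+\delta I)t}\|\leq c e^{-\epsilon t}$ with $c,\epsilon$ independent of $n_1$.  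This is a one-line addition using ingredients you already have, but without it the appeal to the Appendix lemma is incomplete.  (A minor point: your displayed $\Gamma_k$ with the $2(1+k^2)/\epsilon$ term and the constraint $\epsilon>1/\pi^2$ are the $\mathcal{H}^1$ versions; in the $\mathcal{H}^0$ case that term is simply $2/\epsilon$.)
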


\begin{proof}
We define $\hat{X}_1 = \mathrm{col}(\hat{\bold{x}}_{0},\hat{\bold{x}}_{1},\ldots,\hat{\bold{x}}_{n_{0}-1})$, $\bold{e}_{k} = \tilde{\bold{x}}_{k} - \hat{\bold{x}}_{k}$ and
\begin{align*}
E_1 &= \mathrm{col}( \bold{e}_{1} , \bold{e}_{2} , \ldots , \bold{e}_{n_{0}-1} ) , \quad
E_2 = \mathrm{col}( (n_{0}+1)^\kappa \bold{e}_{n_{0}} , \ldots , (n_{1}+1)^\kappa \bold{e}_{n_{1}} ) , \\
\hat{X}_{1,a} & = \mathrm{col}(\hat{X}^1,u) , \quad \hat{X}_2  = \mathrm{col}( \hat{\bold{x}}_{n_{0}} , \hat{\bold{x}}_{n_{0} + 1} , \ldots , \hat{\bold{x}}_{n_{1}} ) , \quad
X  = \mathrm{col}( \hat{X}_{1,a} , E_1 , \hat{X}_2 , E_2 ) .
\end{align*}
Here we set $\kappa = 0$. A different value will be set later on in the proof of Theorem~\ref{thm5}. Combining \eqref{eq: dynamics lambda_k homogeneous coordinates}, \eqref{eq: finite dim dynamics first modes - mult eig}, \eqref{eq: augmented finite dim dynamics first modes - mult eig}, \eqref{eq: output series expansion - mult eig}, and \eqref{eq: mult eig output feedback control strategy}, we obtain 
\begin{equation}\label{eq: dynamics truncated model mult eig}
\dot{X} = F X + \mathcal{L} \zeta
\end{equation}
where the measurement residue is defined by
\begin{equation}\label{eq: residues of measurement mult eig}
\zeta = \sum_{k \geq n_{1}+1} c_{k} \tilde{\bold{x}}_{k} 
\end{equation}
and 
$$
F = \begin{bmatrix}
A_{1,a} - B_{1,a} K & L_a C_1 & 0 & L_a C_2 \\
0 & A_1-LC_1 & 0 & -L C_2 \\
\left[ 0 \; B_{2,u} \right] - B_{2,v} K & 0 & A_2 & 0 \\
0 & 0 & 0 & A_2
\end{bmatrix} , \quad
\mathcal{L} = \mathrm{col}( L_a , - L , 0 , 0 ) ,
$$
where
\begin{align*}
K & = \begin{bmatrix} K_x & k_u \end{bmatrix} , \quad
L_a = \mathrm{col}(L,0) , \quad
A_2 = \mathrm{diag}(M_{n_{0}},M_{n_{0} + 1},\ldots,M_{n_{1}}) , \\
B_{2,u} & = \mathrm{col}(\alpha_{n_{0}},\alpha_{n_{0} + 1},\ldots,\alpha_{n_{1}}) , \quad
B_{2,v} = \mathrm{col}(\beta_{n_{0}},\beta_{n_{0} + 1},\ldots,\beta_{n_{1}}) , \\
C_2 &= \begin{bmatrix}  \frac{c_{n_{0}}}{(n_{0}+1)^\kappa } & \frac{c_{n_{0} + 2}}{(n_{0}+1)^\kappa } & \ldots & \frac{c_{n_{1}}}{(n_{1}+1)^\kappa } \end{bmatrix} .
\end{align*}
The input $u$ and its time derivative $v$ are expressed as \eqref{eq: express u and v}.

Since, for $k \geq n_{0}$, the matrix $M_k$ defined by \eqref{eq: matrix Mk} has a spectral abscissa less than $-\delta$, it follows that $\tilde{M_k} = M_k + \delta I$ is Hurwitz. We can thus define $P_k \succ 0$ as the solution of the Lyapunov equation  
\begin{equation}\label{eq: Lyapunov equation}
M_k^\top P_k + P_k M_k + 2 \delta P_k = \tilde{M}_k^\top P_k + P_k \tilde{M}_k = - \vert \lambda_k \vert I
\end{equation}
for all $k \geq n_{0}$. Defining $\Delta_k = P_k - \frac{1}{2} I$, let us prove that $\Vert \Delta_k \Vert = \mathrm{O}(1/k^2)$ as $k \rightarrow + \infty$. 
Using \eqref{eq: asymptotic estimate sigma - 12} and \eqref{eq: matrix Mk}, the matrix $R_k = M_k - \lambda_k I$ satisfies $\Vert R_k \Vert = \mathrm{O}(1)$. Substituting $\tilde{M}_k = M_k + \delta I = (- \vert \lambda_k \vert + \delta) I + R_k$ and $P_k = \frac{1}{2} I + \Delta_k$ into \eqref{eq: Lyapunov equation} while using \eqref{eq: lambda_k}, we infer that $\Vert \Delta_k \Vert = \mathrm{O}(1/k^2)$. Hence there exist $\gamma_m,\gamma_M > 0$ such that
\begin{equation}\label{eq: bounds on solution to Lyap equation}
\Vert P_k \Vert \leq \gamma_M 
\quad\textrm{and}\quad
\gamma_m I \preceq P_k \preceq \gamma_M I  \quad \forall k \geq n_{0} +1 . 
\end{equation}
Noting that $R_k \in\mathbb{R}^{N \times N}$ is nilpotent (because upper triangular with zeros on the diagonal), we obtain $e^{(M_k + \delta I) t} = e^{(\lambda_k+\delta) t} e^{R_k t} = e^{(\lambda_k+\delta) t} \sum_{i=0}^{N-1} \frac{R_k^i t^i}{i!}$. Since $\Vert R_k \Vert = \mathrm{O}(1)$, we infer the existence of a polynomial function $P_\infty \in \mathbb{R}_{N-1}[X]$, not depending on $k$, such that $\Vert e^{(M_k + \delta I) t} \Vert \leq P_\infty(t) e^{(\lambda_k+\delta) t} \leq P_\infty(t) e^{(\lambda_{n_0}+\delta) t}$ for all $t \geq 0$ and $k \geq n_0$. Since $\lambda_{n_0} < -\delta$, there exist $c,\epsilon > 0$, not depending on $n_1$, such that 
\begin{equation}\label{eq: estimate exp A_2 t}
\Vert e^{(A_2+\delta I) t} \Vert \leq c e^{-\epsilon t}  \quad \forall t \geq 0  \quad \forall n_1 \geq n_0 .
\end{equation}
We now define the Lyapunov functional candidates to establish the stability estimate \eqref{eq: stability estimate}. In $\mathcal{H}^0$ norm, we define
\begin{equation}\label{eq: output-feedback Lyap H0 mult eig}
V = X^\top P X + \sum_{k \geq n_{1}+1} \tilde{x}_{k}^\top P_k \tilde{x}_{k}
\end{equation}  
for some matrix $P \succ 0$. The equivalence between \eqref{eq: output-feedback Lyap H0 mult eig} and the (squared) $\mathcal{H}^0$ norm follows from \eqref{eq: bounds on solution to Lyap equation} and Lemma~\ref{eq: mult eig - Riesz basis}. In $\mathcal{H}^1$ norm, we define for classical solutions
\begin{equation}\label{eq: output-feedback Lyap H1 mult eig}
V = X^\top P X + \sum_{k \geq n_{1}+1} (1+k^2) \tilde{x}_{k}^\top P_k \tilde{x}_{k}
\end{equation}
for some matrix $P \succ 0$.  The equivalence between \eqref{eq: output-feedback Lyap H1} and the (squared) $\mathcal{H}^1$ norm follows from \eqref{eq: bounds on solution to Lyap equation} and Lemma~\ref{lem: mult eig Riesz basis H1} since the series expansion \eqref{eq: series expansion H1 norm} holds. 

In what follows we focus on the $\mathcal{H}^1$ norm. We compute the derivative of $V$ defined by \eqref{eq: output-feedback Lyap H1 mult eig} along the solutions of \eqref{eq: dynamics lambda_k homogeneous coordinates} and \eqref{eq: dynamics truncated model mult eig}. Using the Lyapunov equations \eqref{eq: Lyapunov equation}, the estimates \eqref{eq: bounds on solution to Lyap equation}, Young's inequality and \eqref{eq: express u and v}, we obtain
	\begin{align*}
	\dot{V}+2\delta V 
	&= \tilde{X}^\top 
	\begin{bmatrix} F^\top P + P F + 2 \delta P & P\mathcal{L} \\  
	\mathcal{L}^\top P & 0 \end{bmatrix}	 
	\tilde{X} 
	+ 2 \sum_{k \geq n_{1} +1} (1+k^2) \tilde{x}_k^\top (M_k^\top P_k + P_k M_k + 2 \delta P_k ) \tilde{x}_k \\
	& \phantom{=}\; + 2 \sum_{k \geq n_{1} +1} (1+k^2) \tilde{x}_{k}^\top P_k ( \alpha_{k} u + \beta_{k} v ) \\
	& \leq \tilde{X}^\top 
	\begin{bmatrix} \Theta_{1,1} & P\mathcal{L} \\  
	\mathcal{L}^\top P & 0 \end{bmatrix}	
	\tilde{X} 
	+ 2 \sum_{k \geq n_{1} +1} (1+k^2) \left( \lambda_{k} + \frac{\gamma_M^2(1+k^2)}{\epsilon} \right) \Vert \tilde{x}_{k} \Vert^2 
	\end{align*}
	for all $\epsilon > 0$, where $\tilde{X} = \mathrm{col}(X,\zeta)$ and
	\begin{align*}
	\Theta_{1,1} & = F^\top P + P F + 2 \delta P + \epsilon \left( S_{\alpha,n_{1}} \tilde{E}^\top \tilde{E} + S_{\beta,n_{1}} \tilde{K}^\top \tilde{K} \right)
	\end{align*} 
	where $S_{\alpha,n_{1}} = \sum_{k \geq n_{1} +1} \Vert \alpha_{k} \Vert^2 < \infty$ and $S_{\beta,n_{1}} = \sum_{k \geq n_{1} +1} \Vert\beta_{k}\Vert^2 < \infty$. Using \eqref{eq: residues of measurement mult eig} and the Cauchy-Schwarz inequality, we infer that 
	\begin{equation*}
		\zeta^2 \leq \underbrace{\sum_{k \geq n_{1}+1} \Vert c_{k} \Vert^2}_{= S_{\zeta,n_{1}} < \infty} \sum_{k \geq n_{1}+1} \Vert \tilde{x}_{k} \Vert^2 .
	\end{equation*}	
	 Combining the two latter estimates, we obtain that
	\begin{align}
	\dot{V}+2\delta V 
	& \leq \tilde{X}^\top \Theta_1 \tilde{X} +\sum_{k \geq n_{1} +1 } (1+k^2) \Gamma_{k} \Vert \tilde{x}_{k} \Vert^2 \label{eq: dotV mult eig}
	\end{align}
	where
	\begin{subequations}
	\begin{align}
	\Theta_1 & = \begin{bmatrix} \Theta_{1,1} & P\mathcal{L} \\  
	\mathcal{L}^\top P & -\eta \end{bmatrix} , \label{eq: Theta_1 mult eig} \\
	\Gamma_{k} & = 2 \left( \lambda_{k} + \frac{\gamma_M^2 (1+k^2)}{\epsilon} \right) + \frac{\eta S_{\zeta,n_{1}}}{1+k^2} 
	= 2 \left( - k^2 \left( \pi^2 - \frac{\gamma_M^2}{\epsilon} \right) + a + \frac{\gamma_M^2}{\epsilon} \right) + \frac{\eta S_{\zeta,n_{1}}}{1+k^2} \label{eq: Gamma_k mult eig}
	\end{align}
	\end{subequations}
	for all $\epsilon,\eta > 0$. In particular, for all $\epsilon > \gamma_M^2/\pi^2$, we have $\Gamma_{k} \leq \Gamma_{n_{1}+1}$ for all $k \geq n_{1}+1$. We thus infer that
	\begin{align*}
	\dot{V}+2\delta V 
	& \leq \tilde{X}^\top \Theta_1 \tilde{X} 
	+ \Gamma_{n_{1}+1} \sum_{k \geq n_{1}+1} (1+k^2) \Vert \tilde{x}_{k} \Vert^2
	\end{align*}
	for all $\epsilon > \gamma_M^2/\pi^2$ and all $\eta > 0$. Hence $\dot{V}+2\delta V \leq 0$, giving the claimed stability estimate \eqref{eq: stability estimate}, provided that there exists an integer $n_{1} \geq n_{0} + 1$, real numbers $\epsilon > \gamma_M^2/\pi^2$ and $\eta > 0$ and a matrix $P \succ 0$ such that
	$\Theta_1 \preceq 0$ and $\Gamma_{n_{1}+1} \leq 0$.
	
	Let us prove the feasibility of these constraints. Recall that $F$ is Hurwitz with spectral abscissa less than $-\delta$. Note also that $\Vert C_2 \Vert = \mathrm{O}(1)$, $\Vert B_{2,u} \Vert = \mathrm{O}(1)$ and $\Vert B_{2,v} \Vert = \mathrm{O}(1)$ as $n_{1} \rightarrow + \infty$. Finally, recall that \eqref{eq: estimate exp A_2 t} holds for some constants $c,\epsilon > 0$ not depending on $n_1$. Hence, the application of the lemma in Appendix of \cite{lhachemi2020finite} shows that the solution $P \succ 0$ to $F^\top P + P F + 2 \delta P = -I$ satisfies $\Vert P \Vert = \mathrm{O}(1)$ as $n_{1} \rightarrow + \infty$. We now arbitrarily fix $\epsilon > \gamma_M^2/\pi^2$ and define
	\begin{equation*}
\eta = \left\{\begin{array}{cl}
\frac{1}{\sqrt{S_{\zeta,n_{1}}}} & \mathrm{if}\; S_{\zeta,n_{1}} \neq 0 \\
n_{1} & \mathrm{otherwise}
\end{array}\right. .
\end{equation*}
	We infer that $\Gamma_{n_{1} + 1} \rightarrow - \infty$ as $n_{1} \rightarrow +\infty$. Moreover, the Schur complement shows that  $\Theta_1 \preceq 0$ if $n_{1}$ is chosen sufficiently large. This completes the proof.
\end{proof}

\subsection{Extension to pointwise Dirichlet measurement}

Consider the case of the pointwise measurement of $y^1$ given by \eqref{eq: Dirichlet measurement}. Then the series expansion \eqref{eq: output series expansion} still holds true but with $c_{i,k} = \phi_{i,k}^1(\xi_p)$. We thus infer, similarly to Lemma~\ref{eq: mult eig obsv cond} and in view of the expression of $\phi_{i,k}^1$ given by \eqref{eq: mult eig - def phi}, that the pair $(A_1,C_1)$ is observable if and only if
\begin{equation}\label{eq: obsv cond for Dirichlet mult eig}
\cos(k\pi\xi_p) \neq 0 , \quad 0 \leq k \leq N_{0,1}-1 .
\end{equation}

\begin{thm}\label{thm5}
Considering the pointwise measurement \eqref{eq: Dirichlet measurement} for some $\xi_p \in [0,1]$, replacing the observability condition \eqref{eq: obsv cond mult eig} by \eqref{eq: obsv cond for Dirichlet mult eig}, the  statement of Theorem~\ref{thm4} holds true with $Y = \mathcal{H}^0$ defined by \eqref{eq: Hilbert space H^0}.
\end{thm}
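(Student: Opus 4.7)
The plan is to mirror the proof of Theorem~\ref{thm4} and to adapt only the treatment of the measurement residue $\zeta$, in the same way Theorem~\ref{thm2} modifies the proof of Theorem~\ref{thm1}. First I would compute the new coefficients $c_{i,k} = \phi_{i,k}^1(\xi_p) = \sigma_k^i(\xi_p)$. From \eqref{eq: def sigma_k^1}, $c_{1,k} = \mu_k \cos(k \pi \xi_p) = \mathrm{O}(1)$ as $k \to +\infty$, and from \eqref{eq: asymptotic estimate sigma - 3}, $c_{i,k} = \mathrm{O}(1/k)$ for $i \geq 2$. Consequently $\Vert c_k \Vert = \mathrm{O}(1)$ as $k \to +\infty$; the sequence $(\Vert c_k\Vert)$ is not square-summable, but $(\Vert c_k\Vert/\sqrt{1+k^2})$ is.

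Accordingly, I would set $\kappa = 1$ in the definitions of $E_2$ and $C_2$ transplanted from the proof of Theorem~\ref{thm4}, which secures $\Vert C_2 \Vert = \mathrm{O}(1)$ as $n_1 \to +\infty$. This uniform control preserves $\Vert F\Vert = \mathrm{O}(1)$ and, through the appendix lemma of \cite{lhachemi2020finite}, $\Vert P \Vert = \mathrm{O}(1)$ for the solution of the associated Lyapunov equation. Since $(c_{i,k})$ is no longer square-summable, I would switch from the $\mathcal{H}^0$-type Lyapunov of Theorem~\ref{thm4} to the $(1+k^2)$-weighted functional
\begin{equation*}
V = X^\top P X + \sum_{k \geq n_1 + 1} (1+k^2)\, \tilde{x}_k^\top P_k \tilde{x}_k
\end{equation*}
already introduced in that same proof, and use the weighted Cauchy--Schwarz bound
\begin{equation*}
\zeta^2 \leq \Bigl(\sum_{k \geq n_1+1} \frac{\Vert c_k \Vert^2}{1+k^2}\Bigr) \sum_{k \geq n_1+1}(1+k^2) \Vert \tilde{x}_k\Vert^2 ,
\end{equation*}
whose first factor is finite thanks to the above asymptotics.

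Plugging this residue bound into the derivative computation and reproducing the arguments leading to \eqref{eq: dotV mult eig}, I expect to obtain the same relation with the same matrix $\Theta_1$ from \eqref{eq: Theta_1 mult eig}, but with the modified mode-wise coefficient
\begin{equation*}
\Gamma_k = 2\Bigl(\lambda_k + \frac{\gamma_M^2(1+k^2)}{\epsilon}\Bigr) + \eta\, S_{\zeta,n_1}.
\end{equation*}
For $\epsilon > \gamma_M^2/\pi^2$ fixed and $\eta$ chosen as in the proof of Theorem~\ref{thm4}, $\Gamma_{n_1+1}\to -\infty$ as $n_1 \to +\infty$, and a Schur complement argument yields $\Theta_1 \preceq 0$ for $n_1$ large enough. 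This closes the derivation of $\dot V + 2\delta V \leq 0$ and hence of the stability estimate \eqref{eq: stability estimate} for $Y = \mathcal{H}^0$.

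The step I expect to require the most care is the unboundedness of the Dirichlet point evaluation on $\mathcal{H}^0$, which rules out the direct use of the $\mathcal{H}^0$-type Lyapunov of Theorem~\ref{thm4}. The switch to the $(1+k^2)$-weighted Lyapunov is precisely what allows the residue $\zeta^2$ to be dominated by the parabolic dissipation of the modes $k \geq n_1+1$, and the calibration $\kappa = 1$ is what keeps all the matrices appearing in the Schur complement of bounded norm as the truncation order grows. No genuinely new technicality should arise beyond those already handled in Theorem~\ref{thm2}.
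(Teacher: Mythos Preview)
Your proposal is correct and follows essentially the same approach as the paper, whose proof of Theorem~\ref{thm5} is the single sentence ``The proof follows the arguments developed in the proof of Theorems~\ref{thm2} and~\ref{thm4} by setting $\kappa = 1$.'' Your outline faithfully unpacks this: set $\kappa=1$ so that $\Vert C_2\Vert=\mathrm{O}(1)$, pass to the $(1+k^2)$-weighted Lyapunov functional together with the weighted Cauchy--Schwarz residue bound (exactly as in the proof of Theorem~\ref{thm2}), obtain the modified $\Gamma_k$, and conclude via the Schur complement and $\Gamma_{n_1+1}\to-\infty$ arguments of Theorem~\ref{thm4}.
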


The proof follows the arguments developed in the proof of Theorems~\ref{thm2} and~\ref{thm4} by setting $\kappa = 1$. 

\begin{rem}
In the case of $N$ identical heat equations (Assumption~\ref{assump: identical heat equations}), the pointwise measurement of $y^1_x$ given by \eqref{eq: Neumann measurement} is never an appropriate output for the feedback stabilization of the PDE cascade \eqref{eq: studied PDE cascade}. Indeed, in view of \eqref{eq: lambda_k}, the open-loop system is unstable if and only if $\lambda_0 = a > 0$. But this mode is never observable for the pointwise measurement \eqref{eq: Neumann measurement} because $c_{1,0} = (\phi_{1,0}^1)'(\xi_p) = 0$. Hence, an output feedback strategy based on the pointwise measurement of $y^1_x$ given by \eqref{eq: Neumann measurement} is not sufficient to stabilize the PDE cascade \eqref{eq: studied PDE cascade}. 
\end{rem}

\section{Conclusion}\label{sec: conclusion}
We have studied the boundary output feedback stabilization problem for a cascade on $N$ heat equations, with a spectral approach, using the key property that the (generalized) eigenelements of the underlying operator form a Riesz basis. Our presentation focused on the two extreme case of either two-by-two distinct eigenvalues or eigenvalues of multiplicity $N$, but the intermediate cases can be handled similarly. Future work may address other PDE cascades.

\paragraph{Acknowlegment.}
The third author acknowledges the support of ANR-20-CE40-0009 (TRECOS).

\bibliographystyle{abbrv}        
\bibliography{elsart-bib}

\end{document}